\newtheorem{thm}{Theorem}[section]
\newtheorem{prop}[thm]{Proposition}
\newtheorem{lem}[thm]{Lemma}
\newtheorem{cor}[thm]{Corollary}
\theoremstyle{definition}
\newtheorem{rem}[thm]{Remark}
\newtheorem*{rem*}{Remark}
\newtheorem*{nota}{Notations}
\newcommand{\mm}{\mathbf{m}}
\newcommand{\xx}{\mathbf{x}}
\newcommand{\PP}{\mathcal {P}}
\newcommand{\Ss}{\mathcal {S}}
\newcommand{\MM}{\mathcal {M}}
\newcommand{\Zz}{\mathbb {Z}}
\newcommand{\Nn}{\mathbb {N}}
\newcommand{\im}{\mathrm{Im\,}}
\newcommand{\coker}{\mathrm{coker\,}}
\def\w{\widetilde}
\numberwithin{equation}{section}
\begin{document}
	\title[Operators on symmetric polynomials...]{Operators on symmetric polynomials and applications in computing the cohomology of $BPU_n$}
	\author[F.~Fan]{Feifei Fan}
	\thanks{The author is supported by the National Natural Science Foundation of China (Grant No. 12271183) and by the GuangDong Basic and Applied Basic Research Foundation (Grant No. 2023A1515012217).}
	\address{Feifei Fan, School of Mathematical Sciences, South China Normal University, Guangzhou, 510631, China.}
	\email{fanfeifei@mail.nankai.edu.cn}
	\subjclass[2020]{55R35, 55R40, 55T10, 05E05}
	\keywords{classifying spaces, projective unitary groups, Serre spectral sequences, Young diagrams, Schur polynomials}
	\maketitle
	\begin{abstract}
	This paper studies the integral cohomology ring of the classifying space $BPU_n$ of the projective unitary group $PU_n$. By calculating a Serre spectral sequence, we determine the ring stucture of $H^*(BPU_n;\Zz)$ in dimensions $\leq 11$.
	For any odd prime $p$, we also determine the $p$-primary subgroups of $H^i(BPU_n;\Zz)$ in the range $i\leq 2p+13$ for $i$ odd and $i\leq 4p+8$ for $i$ even.
    The main technique used in the calculation is applying the theory of Young diagrams and Schur polynomials to certain linear operators on symmetric polynomials. 
	\end{abstract}

	\section{Introduction}\label{sec:introduction}
	
	For a topological group $G$, let $BG$ be its classifying space. This paper studies the integral cohomology of $BPU_n$, where $PU_n$ is the quotient group $U_n/S^1$ of the unitary group $U_n$ by its center $S^1$ embedded as scalars, called the \emph{projective unitary group of degree $n$}.
	
	The cohomology of $BPU_n$ is the main tool to classify principal $PU_n$ bundles and topological Azumaya algebras of degree $n$ over a topological space $X$ (cf. \cite{Gro68}). It also plays a key role in the topological period-index problem introduced by Antieau-Williams \cite{AW14a,AW14b}, and further studied by Gu \cite{Gu19,Gu20} and Crowley-Grant \cite{CG20}. However, its calculation is known as a difficult problem in algebraic topology. For special values of $n$, $H^*(BPU_n;R)$ with $R=\Zz$ or a finite field  has been studied by reaserchers in various ways. Here we give a review of these works.

	A general way to calculate the ordinary cohomology $H^*(BG;R)$ for a topological group $G$ and a commutative ring $R$ is to use the Eilenberg-Moore spectral sequence of the universal principle $G$-bundle $G\to EG\to BG$
	\[E_2=\mathrm{Cotor}_{H^*(G;R)}(R,R)\Longrightarrow H^*(BG;R),\]
	by the knowledge of the Hopf algebra structure of $H^*(G;R)$. For $p$ a prime, the Hopf algebra structure of $H^*(PU_n;\Zz/p)$ is determined by Baum-Browder \cite{BB65}, and the module structure of $H^*(PU_n;\Zz/2)$ when $n\equiv 2$ mod $4$ is determined as the collapsing of the above spectral
	sequence \cite{KM75}. Also, the cohomology ring $H^*(BPU_3;\Zz/3)$ is computed  this way \cite{KMS75}.
	This strategy seems no longer works for the calculation of $H^*(BPU_n;\Zz)$  because the Hopf algebra structure of $H^*(PU_n;\Zz)$ is unkonwn for general $n$, even though the ring structure of $H^*(PU_n;\Zz)$ is determined by Duan \cite{Duan20}. 
	
	Another way to calculate the cohomology of $BPU_n$ is to use the fibration $BS^1\to BU_n\to BPU_n$. By computing the Eilenberg-Moore spectral sequence of this fibration, Toda \cite{Tod87} determined the ring structure of $H^*(BPU_{4m+2};\Zz/2)$. In \cite{Tod87}, Toda also computed the cohomology ring $H^*(BPU_4;\Zz/2)$.
    Using Toda's result for $BPU_4$, together with some partial computations of the Serre spectral sequence introduced below, the author \cite{Fan24} determined the ring structure of $H^*(BPU_4;\Zz)$.

	Another important result  is given by Vistoli \cite{Vis07}, where he provided a nice description of the integral cohomology ring of $BPU_p$ for any odd prime $p$. For $p=2$, we have $PU_2=SO_3$ and the ring $H^*(BSO_3;\Zz)$ is well known.
	Vistoli's proof uses many techniques in algebraic geometry, some of which were originally developed by Vezzosi \cite{Vezz00}. But his proof does not apply to general $n$. Some other  interesting results on the mod $p$ cohomology ring of $BPU_p$, as
	well as its Brown-Peterson cohomology can be found in \cite{VV05} and  \cite{KY08}. 
	
	On the other hand, the computation of $H^*(BPU_n;\Zz)$ for an arbitrary $n$ seems very difficult, and very little is known. For $k\leq 5$,  $H^k(BPU_n;\Zz)$ was computed by Antieau-Williams \cite{AW14b}.
	
	Recently, Gu \cite{Gu21} developed a new method to compute the integral cohomology of $PU_n$ by use of the Serre spectral sequence associated to the fibration
	\[BU_n\to BPU_n\to K(\Zz,3),\]
	where $K(\Zz,3)$ denotes the Eilenberg-Mac Lane space with the third
	homotopy group $\Zz$. Although this spectral sequence does not collapse, it has the advantage that the higher differentials in certain range of the total degrees can be fully described. Then calculations of this spectral sequence detect the cohomology of $PU_n$ in a finite range of dimensions. Using this method, Gu determined the ring structure of $H^*(BPU_n;\Zz)$ in dimensions  $\leq 10$ for any value of $n$. So far, this is the best known result on this problem. In \cite{CG20}, Crowley-Gu studies the image of the homomorphism $H^*(BPU_n;\Zz)\to H^*(BU_n;\Zz)$, which gives a nice description to the quotient ring of $H^*(BPU_n;\Zz)$ by torsion elements.
	Other subsequent results on the torsion subgroup of $H^*(BPU_n;\Zz)$ can be found in \cite{GZZZ22,ZZZ23,ZZ24}.
	
	In this paper we follow Gu's strategy to do more computations of this Serre spectral sequence, and provide more information on $H^*(BPU_n;\Zz)$. 
	\begin{nota}
		Throughout the rest of this paper, we use the simplified notation $H^*(-)$ to denote the integral cohomology $H^*(-;\Zz)$.  For an abelian group $G$ and a prime number $p$, let  $G_{(p)}$ denote the localization of $G$ at $p$, and let $_pG$ denote the $p$-primary component of $G$.
	\end{nota}
	
	Note that $PU_n$ is also the quotient group of the special unitary group $SU_n$ by its center $\Zz/n$. So there is an induced fibration of classifying spaces:
	\[B(\Zz/n)\to BSU_n\to BPU_n.\]
	For a prime $p$ not dividing $n$, the space $B(\Zz/n)$ is $p$-locally contractible, therefore from the Serre spectral sequence associated to this fibration we see that
	\[H^*(BPU_n;\Zz_{(p)})\cong H^*(BSU_n;\Zz_{(p)})\cong \Zz_{(p)}[c_2,c_3\dots,c_n],\ \ \deg(c_i)=2i.\] 
	So the rank of $H^*(BPU_n)$ is the same as the rank of the graded polynomial algebra $H^*(BSU_n)$, and to determine the graded group structure of $H^*(BPU_n)$, it
	suffices to consider the $p$-primary subgroups $_pH^*(BPU_n)$ for all prime divisors $p$ of $n$. Our first main result is the following
	
\begin{thm}\label{thm:p-torsion}
Let $p$ be an odd prime, and $n=p^rm$, $r>0$, for a positive integer $m$
co-prime to $p$. Then the $p$-primary subgroup of $H^*(BPU_n)$ satisfies

\begin{enumerate}
	\item\label{item:1} In odd dimensions $\leq 2p+13$, we have 
	\[\begin{split}
&_pH^{2i+1}(BPU_n)=\begin{cases}
	\Zz/p^r,&i=1,\\
	\Zz/p,&i=p+2,\\
	0,&i\leq p+5,\ i\neq 1,\,p+2,\,p+4,
\end{cases}\\
&_pH^{2p+9}(BPU_n)=\begin{cases}
	\Zz/p,&\text{if }p=3\text{ and }r=1,\\
	0,&\text{otherwise},
\end{cases}\\
&_pH^{2p+13}(BPU_n)=\begin{cases}
	\Zz/p,&\text{if }p=3,\\
	0,&\text{otherwise}.
\end{cases}
	\end{split}\]
	
	\item\label{item:2} In even dimensions $\leq 4p+8$, we have
	\[\begin{split}
	&_pH^{2i}(BPU_n)=\begin{cases}
		\Zz/p,&i=p+1,\,2p+2\\
		0,&i\leq 2p+3,\ i\neq p+1,\,2p+2.
	\end{cases}\\
	&_pH^{4p+8}(BPU_n)=\begin{cases}
		\Zz/p,&\text{if }p=3,\\
		0,&\text{if }p>3.	
	\end{cases}
	\end{split}\]	
\end{enumerate}
\end{thm}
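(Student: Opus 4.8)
The plan is to follow Gu's method and analyze the Serre spectral sequence of the fibration $BU_n\to BPU_n\to K(\Zz,3)$ after localizing at $p$. Since $H^*(BU_n)=\Zz[c_1,\dots,c_n]$ (with $\deg c_i=2i$) is torsion-free, the $E_3$-page is $E_3^{s,t}=H^s(K(\Zz,3);\Zz_{(p)})\otimes_{\Zz_{(p)}}\Zz_{(p)}[c_1,\dots,c_n]$, converging to $H^{s+t}(BPU_n;\Zz_{(p)})$. The first task is to record $H^*(K(\Zz,3);\Zz_{(p)})$ through total degree $4p+8$: its torsion-free part is $\Zz_{(p)}$ in degrees $0$ and $3$ (generators $1$ and a class $x_3$), and by Cartan's computation of $H^*(K(\Zz,3);\Zz/p)$ its $p$-torsion in this range is, for $p\geq5$, generated by a single class $e$ in degree $2p+2$ (the integral Bockstein of $P^1$ on the fundamental class) together with $x_3e$, $e^2$ and $x_3e^2$, and nothing else; for $p=3$ there are in addition classes in degrees $2p^2+1=19$ and $2p^2+2=20$ coming from $P^pP^1$ and $\beta P^pP^1$ on the fundamental class. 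The appearance of these extra classes precisely at $p=3$ is what produces the exceptional cases in the statement.

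Next one identifies the differentials. The differential $d_3$ is a derivation and is $H^*(K(\Zz,3);\Zz_{(p)})$-linear, hence determined by its values on the fiber generators, and (as in Gu's work) one has $d_3=x_3\cdot D$, where $D$ is the derivation of $\Zz_{(p)}[c_1,\dots,c_n]$ with $D(c_i)=(n-i+1)c_{i-1}$; writing $c_i=e_i(x_1,\dots,x_n)$ in terms of Chern roots this is $D=\sum_k\partial/\partial x_k$, i.e.\ $D(f)(x)=\tfrac{d}{dt}\big|_{t=0}f(x_1+t,\dots,x_n+t)$. The value $D(c_1)=n$ is forced by $H^3(BPU_n)=\Zz/n$ (equivalently by the fibration $BS^1\to BU_n\to BPU_n$), and the values on the remaining $c_i$ follow by naturality and the splitting principle. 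On $E_4$ one then reads $E_4^{0,t}=\ker D$ and $E_4^{3,t}=x_3\cdot\coker D$ in fiber degree $t$, while the columns over $e$ carry the mod-$p$ reduction $\bar D$. For degree and parity reasons the only further differentials in this range are of odd length connecting the base degrees $0,3,2p+2,2p+5,4p+4,4p+7$ (and, for $p=3$, $19,20$) that actually occur; the essential one for $p\geq5$ is $d_{2p-1}\colon E_{2p-1}^{3,*}\to E_{2p-1}^{2p+2,*}$, which by Kudo's transgression theorem (or by comparison with the $\Zz/p$-coefficient spectral sequence, using the formula $P^1(c_i)=\sum_k x_k^{\,p}\,e_{i-1}(x\setminus x_k)$) is induced by a second explicit operator on $\Zz_{(p)}[c_1,\dots,c_n]/p$. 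There are in addition potential longer differentials ($d_{4p+1}$ into the $e^2$-column, and so on), and for $p=3$ several more hitting the degree-$19$ and $20$ classes, all of which must be pinned down or shown to vanish.

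The computational heart is the evaluation, in each relevant degree, of $\ker D$, $\coker D$, $\ker\bar D$ and the images of the higher operators, and this is where Young diagrams and Schur polynomials enter. In the Schur basis $\{s_\lambda:\ell(\lambda)\leq n\}$ (a $\Zz$-basis of $\Zz[c_1,\dots,c_n]$) one has $D(s_\lambda)=\sum_\square(n+c(\square))\,s_{\lambda-\square}$, the sum running over the removable corner boxes $\square$ of $\lambda$, where $c(\square)$ is the content (column minus row index) of $\square$. Since $\deg s_\lambda=2|\lambda|$ and the partitions of a fixed small size with at most $n$ rows are explicitly enumerable, this turns $\ker D$ and $\coker D$ into finite linear-algebra problems over $\Zz_{(p)}$ whose solutions are governed by the divisibility of the coefficients $n+c(\square)$ by $p$ (and by $p^2$ when $p=3$, near the top of the range); the same bookkeeping handles $\bar D$ and the operator behind $d_{2p-1}$. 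Pushing this through degree $4p+8$ produces the $E_\infty$-page, and since all the groups involved are small and we are working $p$-locally, the remaining extension problems are trivial (an extension of a free $\Zz_{(p)}$-module by a finite group splits), so one reads off ${}_pH^*(BPU_n)$ as stated.

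The main obstacle is concentrated in the last two steps and is twofold. First, one must be certain that there are no unexpected nonzero higher differentials, and that the ones genuinely present — notably $d_{2p-1}$ and the longer differentials near the top of the range, and the extra ones for $p=3$ — have exactly the claimed Steenrod-type form; this needs careful use of Kudo's theorem and of the action of the Steenrod algebra on $H^*(BU_n)$. Second, and this is what the Schur-polynomial apparatus is designed for, computing $\ker$ and $\coker$ of $D$ and its mod-$p$ analogue near the top of the range — where several partitions of the same size interact and divisibility of the coefficients $n+c(\square)$ by powers of $p$ must be tracked simultaneously — is delicate, and is precisely why the result stops at the asymmetric bounds $2p+13$ in odd degrees and $4p+8$ in even degrees rather than extending further.
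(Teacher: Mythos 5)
Your overall framework coincides with the paper's: the $p$-local Serre spectral sequence of $BU_n\to BPU_n\to K(\Zz,3)$, the identification $d_3=x_1\cdot\nabla$ together with the Schur-basis formula $\nabla(s_\lambda)=\sum_{\square}(n+c(\square))\,s_{\lambda-\square}$, and finite linear algebra over partitions. But there are two genuine gaps. First, your plan is purely an upper-bound computation: kernels, cokernels and images of differentials only bound $E_\infty$ from above, while several of the claimed groups require showing that a specific class \emph{survives}. Concretely, $x_1y_{p,0}$ in bidegree $(2p+5,0)$, $x_1y_{p,0}^2$ in $(4p+7,0)$ and $y_{p,1}$ in $(4p+8,0)$ are a priori targets of differentials such as $d_{2p+5}^{0,2p+4}$; and the most delicate case, ${}_3H^{2p+9}(BPU_n)$ for $r=1$, comes down to whether one last $\Zz/3$ in $\coker\nabla^{7}_{(3)}$ at position $(3,4p)$ is killed by $d_{2p-1}$ or not --- the upper bound $|\coker\nabla^7_{(3)}|\leq 3^4$ and the lower bound $\dim\im d_{2p-1}\geq 3$ leave exactly one copy of $\Zz/3$ undecided. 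The paper settles all of these by external geometric input: Vistoli's computation of $H^*(BPU_p)$, Gu's theorem that $y_{p,k}\neq0$ in $H^*(BPU_n)$, and naturality along the diagonal $BPU_p\to BPU_n$, together with Crowley--Gu's ${}^UE_4^{0,*}={}^UE_\infty^{0,*}$. Without some such input your bookkeeping cannot decide these cases; in particular the $r$-dependence of ${}_3H^{2p+9}$ is invisible in your sketch, and your attribution of all the $p=3$ exceptions to extra $K(\Zz,3)$-classes in degrees $19$ and $20$ is wrong for this one (it comes from whether $p^2$ divides $n\pm p$; note also that the relevant integral degree-$19$ class is the decomposable $x_1y_{3,0}^2$, whose total degree $4p+7$ equals $2p+13$ only at $p=3$, rather than an indecomposable attached to $P^pP^1$).

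Second, the identification and estimation of $d_{2p-1}$ is the computational heart and is only announced, not supplied. The paper does not apply Kudo/$P^1$ directly on $H^*(BU_n)$; it compares with the spectral sequences for $BT^n\to BPT^n\to K(\Zz,3)$ and $K(\Zz,2)\to *\to K(\Zz,3)$ and shows that $d_{2p-1}$ on $x_1\cdot\coker\nabla$ is computed by the operator ${}_p\w\Gamma_{p-1}$, which extracts the coefficient of $y^{p-1}$ in $f(x_1+y,\dots,x_n+y)$, reduces mod $p$ and restricts to $n-1$ variables. Proving that the image of this operator is large enough to annihilate the relevant cokernels (Proposition 6.1 of the paper) is a nontrivial combinatorial argument via Lemma 6.4 and the hook content formula, carried out degree by degree; your proposal would need an equally explicit computation, and it is not clear without further argument that the $P^1$-based operator you propose agrees with $\w\Gamma_{p-1}$. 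Until those two points are supplied, the proof is incomplete precisely at the cases that distinguish the theorem from prior work.
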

\begin{rem}\label{rem:context}
	 For $k\leq 2p+8$, the groups $_pH^k(BPU_n)$ are already known. They are given in \cite{Gu21} for $k\leq 2p+1$, in \cite{GZZZ22} for $2p+2\leq k\leq 2p+4$, in \cite{ZZZ23} for $k=2p+6$, and in \cite{ZZ24} for $k=2p+5$, $2p+7$, $2p+8$.
\end{rem}

The ring structure of $H^*(BPU_n)$ is more interesting than its group structure. 
Our second theorem improves Gu's result by raising the bound of dimensions to $11$.
\begin{thm}\label{thm:below 12}
For an integer $n>1$, $H^{*}(BPU_n)$ in dimensions $\leq 11$ is isomorphic
to the following graded ring:
\[\Zz[\alpha_2,\dots,\alpha_{j_n},x_1,y_{3,0},y_{2,1}]/I_n.\]
Here $j_n=\min\{5,n\}$, $\deg(\alpha_i)=2i$, $\deg(x_1)=3$, $\deg(y_{p,i})=2p^{i+1}+2$. The ideal $I_n$ of relations is generated by
\begin{gather*}
	nx_1,\ \epsilon_2(n)x_1^2,\ \epsilon_3(n)y_{3,0},\ \epsilon_2(n)y_{2,1},\ \delta(n)\alpha_2x_1,\\
	(\delta(n)-1)(y_{2,1}-\alpha_2x_1^2),\ \alpha_3x_1,\ \mu(n)\alpha_4x_1,
\end{gather*}
where $\epsilon_p(n)=\gcd(p,n)$ and 
\[\delta(n)=\begin{cases}
	2,&\text{if }n\equiv 2 \textrm{ mod } 4,\\
	1,&\text{otherwise,}
\end{cases}
\quad \mu(n)=\begin{cases}
	1,&\text{if }n\not\equiv 0 \textrm{ mod } 4,\\
	4,&\text{if }n\equiv 4 \textrm{ mod } 8,\\
	2,&\text{if }n\equiv 0 \textrm{ mod } 8.
\end{cases}\]
\end{thm}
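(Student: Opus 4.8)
**Proof proposal for Theorem 1.2 (the structure of $H^*(BPU_n)$ in dimensions $\leq 11$).**

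The plan is to extract this result from a careful bookkeeping of the Serre spectral sequence of the fibration $BU_n\to BPU_n\to K(\Zz,3)$, combined with the odd-primary information already contained in Theorem 1.1 and the known $2$-primary computations. First I would recall the structure of $H^*(K(\Zz,3))$ and $H^*(BU_n)=\Zz[c_1,\dots,c_n]$, and set up the $E_2$-page $E_2^{s,t}=H^s(K(\Zz,3);H^t(BU_n))$. The first nonzero differential is $d_3$, which is determined (as in Gu's work) by $d_3(c_1)=x_1$, the fundamental class of $K(\Zz,3)$, extended as a derivation; this already produces the torsion class $x_1$ with $n x_1=0$ coming from the fact that in $BPU_n$ the relevant Chern class $c_1$ is replaced by the mod-$n$ reduction. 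I would then identify, in total degree $\leq 11$, exactly which classes survive: the polynomial generators $\alpha_2,\dots,\alpha_{j_n}$ (images of $c_2,\dots,c_{j_n}$, truncated at $n$ since $c_i$ for $i>n$ does not exist), the $3$-torsion class $y_{3,0}$ in degree $2\cdot 3+2=8$, and the $2$-torsion class $y_{2,1}$ in degree $2\cdot 4+2=10$. The classes $y_{p,i}$ are precisely the transgressive generators detected by the operator-theoretic machinery of the earlier sections applied to symmetric polynomials in the Chern roots.

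The second step is to pin down the higher differentials ($d_5$, and beyond) and the resulting extension problems that govern the ring relations. For the additive structure in this range I can invoke Theorem 1.1 (for the odd-primary part, $p=3$ being the only relevant odd prime since $j_n\le 5<2p$ would otherwise make higher $y_{p,i}$ invisible) and cite \cite{Tod87,Fan24,Gu21,GZZZ22,ZZZ23,ZZ24} for the $2$-primary part and for degrees $\leq 10$; the genuinely new content is degree $11$ together with the multiplicative relations. The relations $nx_1$, $\epsilon_2(n)x_1^2$, $\epsilon_3(n)y_{3,0}$, $\epsilon_2(n)y_{2,1}$ are orders of torsion classes read off from the spectral sequence (the $\gcd$'s appear because a class is killed only when the relevant binomial coefficient or Chern-class coefficient is a unit). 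The relation $\alpha_3 x_1=0$ and $\mu(n)\alpha_4 x_1=0$ come from hitting $c_3 c_1$ and $c_4 c_1$ (or suitable combinations) by differentials; the coefficient $\mu(n)$ reflects the $2$-adic valuation of the relevant structure constant, exactly as in Toda's $BPU_{4m+2}$ computation and its refinements. The subtle relations $\delta(n)\alpha_2 x_1=0$ and $(\delta(n)-1)(y_{2,1}-\alpha_2 x_1^2)=0$ encode the case distinction $n\equiv 2\bmod 4$: when $n\equiv 2\bmod 4$ one has $2\alpha_2 x_1=0$ and $y_{2,1}$ is a genuinely new indecomposable, whereas otherwise $\alpha_2 x_1$ already has order $n$ (no extra relation) and $y_{2,1}=\alpha_2 x_1^2$ is decomposable. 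I would verify these by computing the $d_3$- and $d_5$-images of the appropriate classes in $E^{*,*}$ and tracking the induced multiplicative extensions on $E_\infty$.

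The third step is to check that the listed generators and relations give a complete presentation, i.e.\ that there are no further generators or relations in degrees $\leq 11$. For this I would compare Poincaré series (using the rank computation $H^*(BPU_n)_{(p)}\cong \Zz_{(p)}[c_2,\dots,c_n]$ for $p\nmid n$ from the introduction) degree by degree through $11$, confirming that the proposed ring has the right free rank in each degree and the torsion subgroups predicted by Theorem 1.1 and the cited $2$-primary results; any discrepancy would signal a missing relation. One must also check consistency of the relations (e.g.\ that $\alpha_3 x_1=0$ together with $n x_1=0$ does not over-kill anything, and that the ideal $I_n$ is exactly the kernel of the natural surjection from the free ring).

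The main obstacle I anticipate is resolving the multiplicative extension problems on the $E_\infty$-page in the top degrees ($10$ and $11$): the spectral sequence gives the associated graded ring, and deciding whether, say, $y_{2,1}$ equals $\alpha_2 x_1^2$ or is a new generator — and whether $\alpha_2 x_1$ has order $2$ or order $n$ — requires genuine input beyond the additive $E_\infty$, namely the precise behavior of the differentials on products of Chern classes, which is where the symmetric-polynomial operator machinery of the preceding sections does the real work. Controlling the $2$-torsion is harder than the odd-torsion because there is no clean analogue of Theorem 1.1 at the prime $2$ in this range, so I would lean on Toda's and Fan's explicit low-degree results and on the fibration $B\Zz/n\to BSU_n\to BPU_n$ to cross-check.
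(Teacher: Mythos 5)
Your overall strategy --- running the Serre spectral sequence of $BU_n\to BPU_n\to K(\Zz,3)$, reducing the new content to total degree $11$ at the prime $2$ (dimensions $\leq 10$ being Gu's theorem and the odd-primary part being Theorem \ref{thm:p-torsion}), and then resolving multiplicative extensions --- is the same as the paper's. But the proposal stops exactly where the new mathematics begins, and two of the steps you sketch would not go through as described. First, the differential that controls the order of $x_1\alpha_4$ is not $d_5$ but $d_7\colon {^UE}^{3,8}\to{^UE}^{10,2}\cong\Zz/2\{y_{2,1}c_1\}$ (the target is the column of $y_{2,1}$, in degree $10$); computing it requires pushing the classes $x_1c_2^2$ (for $n\equiv 2\bmod 4$) and $2x_1c_4$ (for $4\mid n$) into the maximal-torus spectral sequence $^TE$, rewriting $\sigma_j(v_1,\dots,v_n)$ in the variables $v_i'=v_i-v_n$, and applying Gu's formula for the differentials of $^KE$. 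Before that one must even know the group ${^UE}_4^{3,8}=\ker d_3^{3,8}/\im d_3^{0,10}$, which reduces to computing $\coker\nabla^5_{(2)}$ and $\coker\nabla^4_{\Zz/2}$ on $\Lambda_n$ --- a genuine $2$-local Schur-polynomial computation whose answer ($\Zz/8\oplus(\Zz/2)^2$ versus $\Zz/4\oplus(\Zz/2)^2$) is precisely what makes $\mu(n)$ depend on $n\bmod 8$. Your Poincar\'e-series comparison cannot substitute for either step, since it constrains only ranks, not $2$-torsion.

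Second, the spectral sequence by itself gives only upper bounds on the order of $x_1\alpha_4$; to show the order is exactly $\mu(n)$ one needs matching lower bounds, which the paper obtains by restricting along the diagonal maps $BPU_2\to BPU_n$ and $BPU_4\to BPU_n$ (using $H^*(BSO_3)$ and Fan's computation of $H^*(BPU_4)$, respectively), together with, for $n\equiv 0\bmod 8$, a direct argument that $x_1\alpha_4\neq 0$ in ${^UE}_4^{3,8}$ because the coefficient of $\sigma_2^2$ in $\alpha_4$ is odd while it is even in $\nabla(\mathbf{m})$ for every monomial $\mathbf{m}$. You allude to ``leaning on Toda's and Fan's results'' and to the operator machinery ``doing the real work,'' which is the right instinct, but without (i) explicit integral generators $\alpha_2^2,\alpha_4$ of $\ker\nabla^4$ so that the relation can be attributed to $\alpha_4x_1$ at all, (ii) the $d_7$ computation, and (iii) the restriction arguments, neither the coefficient $\mu(n)$ nor the relation $(\delta(n)-1)(y_{2,1}-\alpha_2x_1^2)$ can be derived. (A small additional slip: $d_3(c_1)=nx_1$, not $x_1$; Corollary \ref{cor:U^d_3} gives $d_3(c_k)=(n-k+1)c_{k-1}x_1$.)
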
 
Here is the organization of this paper. In Section \ref{sec:SS} we recall the Serre spectral sequence $^UE$ associated to the fibration $BU_n\to BPU_n\to K(\Zz,3)$, which is the main object studied in \cite{Gu21}. In Section \ref{sec:Schur} we recall some classical results in the theory of Young diagrams and Schur polynomials that we will use to calculate certain  differentials in the spectral sequence $^UE$. As we will see, the first and second nontrivial differentials in the localization of $^UE$ at an odd prime $p$ can be identified with two linear operators on symmetric polynomials, for which Schur polynomials provid a good basis. The main calculations about these two operators are contained in Section \ref{sec:cokernel} to \ref{sec:Gamma}, and the proof of Theorem \ref{thm:p-torsion} and \ref{thm:below 12} are given in Section \ref{sec:proof} and \ref{sec:proof below 12}, respectively.

\section{A spectral sequence converging to $H^*(BPU_n)$}\label{sec:SS}

The short exact sequence of Lie groups
	\[1\to S^1\to U_n\to PU_n\to 1\]
	induces  a fibration sequence of their classifying spaces 
	\[BS^1\to BU_n\to BPU_n.\]
	Notice that $BS^1$ is of the homotopy type of the Eilenberg-Mac Lane space $K(\Zz,2)$. So we obtain another fibration sequence:
	\[U:BU_n\to BPU_n\to K(\Zz,3).\]
	(Cf. \cite{Gan67}.) Let $^UE$ be the integral cohomological Serre spectral sequence associated to this fibration. The $E_2$ page of this spectral sequence has the form
	\[^UE_2^{s,t}=H^s(K(\Zz,3);H^t(BU_n))\Longrightarrow H^{s+t}(BPU_n).\]
	We will use this spectral sequence to compute the cohomology of $BPU_n$. In order to carry out the computation, we first need to know the cohomology of $K(\Zz,3)$.
	
	The integral cohomology of the Eilenberg-Mac Lane space $K(\pi,n)$ for $\pi$ a finitely generated abelian group can be computed by using Cartan's method developed in \cite[Expos\'e 11]{Car54}.	For the special case that $\pi=\Zz$ and $n=3$, Gu \cite{Gu21} gave a more explicit description of $H^*(K(\Zz,3))$.
	
	Throughout the rest of this section, we write $\Zz[x;k]$, $E[y;k]$ for the polynomial algebra and exterior algebra over $\Zz$ respectively, with one generator of degree $k$. 
	
	Let $C(3)$ be the (differential graded) $\Zz$-algebra
	\[\begin{split}
		C(3)&=\bigotimes_{p\neq 2\text{ prime}}\big(\bigotimes_{k\geq0}\Zz[y_{p,k};2p^{k+1}+2]\otimes E[x_{p,k};2p^{k+1}+1]\big)\\
		&\otimes\bigotimes_{k\geq0}\Zz[x_{2,k};2^{k+2}+1]\otimes\Zz[x_1;3].
	\end{split}\]
	For notational convenience, we set $y_{2,0}=x_1^2$ and $y_{2,k}=x_{2,k-1}^2$ for $k\geq 1$. The differential of $C(3)$ is given by
	\[d(x_{p,k})=py_{p,k},\ d(y_{p,k})=d(x_1)=0,\text{ for } p \text{ a prime and }k\geq0.\]
	Gu \cite{Gu21} showed  that the integral cohomology ring of $K(\Zz,3)$ is isomorphic to the cohomology of $C(3)$. 
	
	The following two propositions are partial results of \cite[Proposition 2.14]{Gu21}.
	
	\begin{prop}[{\cite[Corollary 2.15]{Gu21}}]\label{prop:K_3 below 14}
		In dimensions $\leq 14$, $H^*(K(\Zz,3))$ is isomorphic to the following graded ring:
		\[\Zz[x_1,y_{2,1},y_{3,0},y_{5,0}]/(2x_1^2,2y_{2,1},3y_{3,0},5y_{5,0}).\]
	\end{prop}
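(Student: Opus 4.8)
Since this proposition is \cite[Corollary 2.15]{Gu21}, one route is simply to extract the low‑degree part of \cite[Proposition 2.14]{Gu21}; I outline the direct computation behind it. The plan is to use the isomorphism $H^*(K(\Zz,3))\cong H^*(C(3))$ and to compute the right‑hand side in degrees $\leq 14$. Write $C(3)$ as a tensor product of differential graded subalgebras: the $2$‑primary factor $\widetilde A=\Zz[x_1;3]\otimes\bigotimes_{k\geq 0}\Zz[x_{2,k};2^{k+2}+1]$ with $dx_1=0$, $dx_{2,0}=2x_1^2$, $dx_{2,k}=2x_{2,k-1}^2$; and, for each odd prime $p$ and $k\geq 0$, the factor $B_{p,k}=\big(\Zz[y_{p,k};2p^{k+1}+2]\otimes E[x_{p,k};2p^{k+1}+1],\ dx_{p,k}=py_{p,k}\big)$. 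Every $B_{p,k}$ other than $B_{3,0}$ and $B_{5,0}$ has trivial cohomology in all positive degrees $\leq 14$, and in degrees $\leq 15$ the factor $\widetilde A$ agrees with the finite sub‑DGA $A=\Zz[x_1;3]\otimes\Zz[x_{2,0};5]\otimes\Zz[x_{2,1};9]$ (all later generators have degree $\geq 17$). So, after this routine truncation, it suffices to compute $H^*(A\otimes B_3\otimes B_5)$ in degrees $\leq 14$, where $B_3:=B_{3,0}$ and $B_5:=B_{5,0}$.

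The odd factors are immediate: $B_p$ has $\Zz$‑basis $\{y_{p,0}^a,\,x_{p,0}y_{p,0}^a\}_{a\geq 0}$ with $d(x_{p,0}y_{p,0}^a)=py_{p,0}^{a+1}$, so $H^*(B_p)\cong\Zz[y_{p,0}]/(py_{p,0})$; in degrees $\leq 14$ this is $\Zz\cdot 1\oplus\Zz/3\cdot y_{3,0}$ for $p=3$ and $\Zz\cdot 1\oplus\Zz/5\cdot y_{5,0}$ for $p=5$. The factor $A$ is where the work lies. First compute the cohomology of $A':=\Zz[x_1;3]\otimes\Zz[x_{2,0};5]$: since $x_1^2$ is central of even degree, the Leibniz rule gives $d(x_{2,0}^{2j})=0$ and $d(x_{2,0}^{2j+1})=2x_1^2x_{2,0}^{2j}$, so $A'$ decomposes as the direct sum over $j\geq 0$ of two‑term complexes $\Zz[x_1]\,x_{2,0}^{2j+1}\xrightarrow{\,2x_1^2\,}\Zz[x_1]\,x_{2,0}^{2j}$, giving $H^*(A')\cong\Zz[x_1,x_{2,0}^2]/(2x_1^2)$. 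Then adjoin $x_{2,1}$ with $dx_{2,1}=2x_{2,0}^2$: in degrees $\leq 14$ the only monomials of $A$ involving $x_{2,1}$ are $x_{2,1}$, $x_1x_{2,1}$, $x_{2,0}x_{2,1}$, and one checks directly that no $\Zz$‑combination of these with an element of $A'$ is a cocycle, so $A$ and $A'$ have the same cocycles through degree $14$; on the other hand $A$ has the extra coboundaries $dx_{2,1}=2x_{2,0}^2$ (degree $10$) and $d(x_1x_{2,1})=\pm2x_1x_{2,0}^2$ (degree $13$), which turn the classes of $x_{2,0}^2$ and $x_1x_{2,0}^2$ into elements of order $2$ while leaving $x_1^a$ ($a\leq 4$) alone. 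Writing $y_{2,1}:=[x_{2,0}^2]$, we obtain $H^*(A)\cong\Zz[x_1,y_{2,1}]/(2x_1^2,2y_{2,1})$ in degrees $\leq 14$.

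Finally, reassemble by the K\"unneth theorem: the three complexes consist of free $\Zz$‑modules, $H^*(A)$ is (free or) $2$‑primary, $H^*(B_3)$ is $3$‑primary in positive degrees, and $H^*(B_5)$ is $5$‑primary in positive degrees, so every $\mathrm{Tor}$ term vanishes and $H^*(C(3))\cong H^*(A)\otimes H^*(B_3)\otimes H^*(B_5)$ as graded rings in degrees $\leq 14$; the right‑hand side is $\Zz[x_1,y_{2,1},y_{3,0},y_{5,0}]/(2x_1^2,2y_{2,1},3y_{3,0},5y_{5,0})$ truncated to degrees $\leq 14$ (one may confirm this degree by degree, e.g.\ $H^{11}=\Zz/3\cdot x_1y_{3,0}$, $H^{12}=\Zz/2\cdot x_1^4\oplus\Zz/5\cdot y_{5,0}$, and $H^{14}=0$ because the only monomial there, $x_1^2y_{3,0}$, is annihilated by the coprime integers $2$ and $3$). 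The only real obstacle is bookkeeping: one must keep the Leibniz sign conventions of $C(3)$ straight — so that $x_{2,0}^2$ really is a nonzero cocycle and $d^2=0$ holds — and make sure no cocycle is overlooked in the narrow band $0\leq\ast\leq 14$; both are finite, routine verifications.
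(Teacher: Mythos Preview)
The paper does not give its own proof of this proposition; it simply quotes the result from \cite{Gu21} (as Corollary 2.15 there), having already recorded the DGA model $C(3)$ and the isomorphism $H^*(K(\Zz,3))\cong H^*(C(3))$. Your argument is a correct direct computation of $H^{\leq 14}(C(3))$, and it is exactly the kind of low-degree extraction that lies behind the cited corollary: you isolate the finitely many tensor factors that contribute in degrees $\leq 14$, compute the $2$-primary piece by hand using the Leibniz rule on $x_{2,0}^j$ and the extra coboundaries coming from $x_{2,1}$, and then reassemble via K\"unneth, observing that the $2$-, $3$-, and $5$-primary torsion produce no $\mathrm{Tor}$ contributions. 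The only places to be careful are the ones you flagged---the sign bookkeeping ensuring $d(x_{2,0}^{2j})=0$ and the check that the monomials $x_{2,1},\,x_1x_{2,1},\,x_{2,0}x_{2,1}$ yield no new cocycles---and those verifications go through as you indicate. So your proposal is correct and is essentially the argument implicit in the citation.
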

	
	\begin{prop}\label{prop:K_3 odd prime}
		Let $p$ be an odd prime. Then in dimensions $\leq 2p^2+2p+2$, the $p$-local cohomology $H^*(K(\Zz,3);\Zz_{(p)})$ is isomorphic to the following graded ring:
		\[\Zz_{(p)}[x_1,y_{p,0},y_{p,1}]/(x_1^2,py_{p,0},py_{p,1}).\]
	\end{prop}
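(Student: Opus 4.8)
The plan is to run the computation entirely inside Gu's differential graded model. Since $\Zz_{(p)}$ is flat over $\Zz$, the ring isomorphism $H^*(K(\Zz,3);\Zz)\cong H^*(C(3))$ recalled above gives $H^*(K(\Zz,3);\Zz_{(p)})\cong H^*\!\big(C(3)\otimes_{\Zz}\Zz_{(p)}\big)$, so everything reduces to computing the cohomology of the complex $C_{(p)}:=C(3)\otimes_{\Zz}\Zz_{(p)}$ in degrees $\le 2p^{2}+2p+2$. First I would split $C_{(p)}$ as a tensor product of sub-dga's according to primes: $C_{(p)}=A\otimes_{\Zz_{(p)}}B\otimes_{\Zz_{(p)}}C$, where $A=\bigotimes_{k\ge0}\big(\Zz_{(p)}[y_{p,k}]\otimes E[x_{p,k}]\big)$ carries the prime-$p$ generators (with $d x_{p,k}=p\,y_{p,k}$), $C=\bigotimes_{q\neq 2,p}\bigotimes_{k\ge0}\big(\Zz_{(p)}[y_{q,k}]\otimes E[x_{q,k}]\big)$ carries the generators at the remaining odd primes (with $d x_{q,k}=q\,y_{q,k}$), and $B=\Zz_{(p)}[x_1]\otimes\bigotimes_{k\ge0}\Zz_{(p)}[x_{2,k}]$ carries $x_1$ together with the prime-$2$ tower (with $d x_{2,k}=2\,x_{2,k-1}^{2}$, $x_{2,-1}:=x_1$).

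The inputs are two short Koszul-type computations. For a unit $u\in\Zz_{(p)}$ the complex $\big(\Zz_{(p)}[y]\otimes E[x],\ d x=uy\big)$ is acyclic with $H^{*}=\Zz_{(p)}$ in degree $0$, while $\big(\Zz_{(p)}[y]\otimes E[x],\ d x=p\,y\big)$ has $H^{*}=\Zz_{(p)}[y]/(py)$. Applying the first statement factorwise and then the Künneth theorem over $\Zz_{(p)}$ gives $H^{*}(C)=\Zz_{(p)}$; applying the second gives $H^{*}(A)\cong\bigotimes_{k\ge0}\Zz_{(p)}[y_{p,k}]/(py_{p,k})\cong\Zz_{(p)}[y_{p,0},y_{p,1},\dots]/(py_{p,0},py_{p,1},\dots)$, where the $\mathrm{Tor}$ terms produced by Künneth appear only in degrees $\ge (2p+2)+(2p^{2}+2)-1>2p^{2}+2p+2$, hence are invisible in our range, as are the generators $y_{p,k}$ with $k\ge2$ (degree $\ge 2p^{3}+2$).

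The technical heart is the claim $H^{*}(B)=\Zz_{(p)}[x_1]/(x_1^{2})$, i.e. that the prime-$2$ tower contributes nothing $p$-locally beyond the exterior class $x_1$. I would write $B=\varinjlim_N B_N$ with $B_N=\Zz_{(p)}[x_1]\otimes\bigotimes_{k=0}^{N}\Zz_{(p)}[x_{2,k}]$ (each $B_N$ is a sub-dga, since $d x_{2,N}=2x_{2,N-1}^2$ stays in $B_N$), and prove by induction on $N$ that $H^{*}(B_N)\cong\Zz_{(p)}[x_{2,N}^{2}]\otimes\Zz_{(p)}[x_1]/(x_1^{2})$: running the spectral sequence of the filtration of $B_N$ by $x_{2,N}$-degree, the $E_1$-differential sends $x_{2,N}\mapsto 2[x_{2,N-1}^{2}]$, and since $2$ is invertible the $E_1$-computation is again of Koszul type and collapses $E_2$ to $\Zz_{(p)}[x_{2,N}^{2}]\otimes\Zz_{(p)}[x_1]/(x_1^{2})$; the higher differentials vanish by a degree count and $E_\infty$ is $\Zz_{(p)}$-free, so there is no extension problem. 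Finally, in $B_{N+1}$ one has $x_{2,N}^{2}=d(\tfrac12 x_{2,N+1})$, so the map $H^{*}(B_N)\to H^{*}(B_{N+1})$ kills every positive power of $x_{2,N}^{2}$; passing to the colimit gives $H^{*}(B)=\Zz_{(p)}\{1,x_1\}=\Zz_{(p)}[x_1]/(x_1^{2})$.

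With these in hand, since $H^{*}(B)$ and $H^{*}(C)$ are free $\Zz_{(p)}$-modules the Künneth theorem yields $H^{*}(C_{(p)})\cong H^{*}(A)\otimes_{\Zz_{(p)}}H^{*}(B)$ with no $\mathrm{Tor}$ contribution and with the product inherited from the model; truncating to degrees $\le 2p^{2}+2p+2$ leaves exactly $\Zz_{(p)}[x_1,y_{p,0},y_{p,1}]/(x_1^{2},py_{p,0},py_{p,1})$, which is the asserted isomorphism of graded rings. I expect the main obstacle to be the third step — making the assertion that the prime-$2$ tower is $p$-locally invisible fully precise, in particular tracking the colimit of the $B_N$ and the vanishing of the higher differentials; the rest is Koszul algebra and Künneth bookkeeping. (Alternatively, the statement can be read off directly from the explicit additive description of $H^{*}(C(3))$ in \cite[Proposition 2.14]{Gu21}.)
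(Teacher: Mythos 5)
Your argument is correct, but it is worth noting that the paper does not prove this proposition at all: it is quoted verbatim as a ``partial result of [Gu21, Proposition~2.14]'', exactly the alternative you mention in your last parenthesis. What you have written is a self-contained derivation of that quoted input from Gu's model $C(3)$, and it is essentially the computation that underlies Gu's Proposition~2.14 in the first place: split $C(3)\otimes\Zz_{(p)}$ into Koszul-type factors, observe that the factors at primes $q\neq 2,p$ are acyclic because $q$ is a unit, that the prime-$p$ tower contributes $\Zz_{(p)}[y_{p,k}]/(py_{p,k})$ with all Tor terms and all $y_{p,k}$, $k\geq 2$, falling outside the stated range, and that the prime-$2$ tower collapses $p$-locally to $\Zz_{(p)}[x_1]/(x_1^2)$ because each $x_{2,k-1}^2=d(\tfrac12 x_{2,k})$ becomes exact one stage up. The two places that deserve the extra care you already flag are indeed the prime-$2$ tower (here one should be slightly careful with the Leibniz signs on powers of the odd-degree generators $x_{2,k}$ — the direct computation $d(x_{2,k}^b)=0$ for $b$ even and $=2x_{2,k-1}^2x_{2,k}^{b-1}$ for $b$ odd makes your filtration argument, or even a bare-hands induction, go through) and the passage to the colimit of the $B_N$, which is harmless since cohomology commutes with filtered colimits and each graded piece stabilizes. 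So your route buys independence from Gu's Proposition~2.14 at the cost of redoing a computation the paper is content to cite.
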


	To compute some higher differentials of the spectral sequence $^UE$, we consider two other spectral sequences.
Let $T^n$ and $PT^n$ be the maximal tori of $U_n$ and $PU_n$ respectively, and let $\psi:T^n\to U_n$ and $\psi':PT^n\to PU_n$ be the inclusions.
	Then there is an exact sequence of Lie groups 
\[1\to S^1\to T^n\to PT^n\to 1,\]
which as above induces a fibration sequence 
\[T:BT^n\to BPT^n\to K(\Zz,3).\]
We also consider the path fibration sequence
\[K: BS^1\simeq K(\Zz,2)\to *\to K(\Zz,3),\]
where $*$ denotes a contractible space. We denote the Serre spectral
sequences associated  to $T$ and $K$ as $^TE$ and $^KE$ respectively.

Let $\varphi:S^1\to T^n$ be the diagonal map. Then there is the following homotopy commutative diagram of fibration sequences
\begin{gather}\label{diag:SS}
\begin{gathered}
	\xymatrix{K:\ar[d]^{\Phi}& BS^1\ar[r] \ar[d]^{B\varphi}&\ast\ar[r] \ar[d]&K(\Zz,3)\ar[d]^{=}\\
		T:\ar[d]^{\Psi} & BT^n\ar[r] \ar[d]^{B\psi}&BPT^n\ar[r] \ar[d]^{B\psi'}&K(\Zz,3)\ar[d]^{=}\\
		U: & BU_n\ar[r]&BPU_n\ar[r]&K(\Zz,3)}
\end{gathered}
\end{gather}

We write $^Ud_*^{*,*}$, $^Td_*^{*,*}$, $^Kd_*^{*,*}$ for the differentials of $^UE$, $^TE$, and $^KE$, respectively. We also use the simplified notation $d^{*,*}_*$ if there is no ambiguity.

Recall that the cohomology rings of the fibers $BS^1$, $BT^n$ and $BU_n$ are
\begin{align}
	H^*(BS^1)&=\Zz[v],\ \deg(v)=2,\\
	H^*(BT^n)&=\Zz[v_1,\dots,v_n],\ \deg(v_i)=2,\\
	H^*(BU_n)&=\Zz[c_1,\dots,c_n],\ \deg(c_i)=2i.\label{eq:BPUn}
\end{align}
Here $c_i$ is the $i$th universal Chern class of the classifying space $BU_n$ for $n$-dimensional complex bundles.
The induced homomorphisms between these cohomology rings are given by
\[
	B\varphi^*(v_i)=v,\quad B\psi^*(c_i)=\sigma_i(v_1,\dots,v_n),
\]
where $\sigma_i(v_1,\dots,v_n)$ is the $i$th elementary symmetric polynomial in the variables $v_1,\dots,v_n$. Hence, from \eqref{diag:SS} we get the maps $\Phi^*: {^T}E_2^{*,*}\to {^K}E_2^{*,*}$ and $\Psi^*:{^U}E_2^{*,*}\to {^T}E_2^{*,*}$ of spectral sequences on $E_2$ pages. Since these spectral sequences are concentrated in even rows,  all $d_2$ differentials are trivial, so their $E_3$ pages are equal to the $E_2$ pages.

The differentials of the spectral sequence $^KE$ are  described in \cite{Gu21} as follows. Here we only list a part of the results in {\cite[Corollary 2.16]{Gu21}}, which will be sufficient for our purposes.

\begin{prop}[{\cite[Corollary 2.16]{Gu21}}]\label{prop:differentials K}
	For a prime $p$, the higher differentials of $^KE^{*,*}_*$ satisfy
	\[\begin{split}
		&d_3(v)=x_1,\\
		&d_{2p^{k+1}-1}(p^kx_1v^{lp^e-1})=v^{lp^e-1-(p^{k+1}-1)}y_{p,k},\ e>k\geq 0,\ \gcd(l,p)=1,\\
		&d_r(x_1)=d_r(y_{p,k})=0\text{ for all }r\geq2,\,k\geq0,
	\end{split}\]
	and the Leibniz rule.
	Here $^KE^{3,2(lp^e-1)}_{2p^{k+1}-1}\subset {^KE}^{3,2(lp^e-1)}_2$ is generated by $p^kx_1v^{lp^e-1}$.
\end{prop}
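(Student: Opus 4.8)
The fibration $K$ has contractible total space, so $^KE$ converges to the trivial graded abelian group: $^KE_\infty^{s,t}=0$ unless $(s,t)=(0,0)$. By the universal coefficient theorem and the freeness of $H^*(BS^1)=\Zz[v]$, the $E_2$-page is $H^*(K(\Zz,3))\otimes\Zz[v]$; it is concentrated in even rows, so $^KE_3={}^KE_2$ and every nonzero differential $d_r$ has $r$ odd. Two of the asserted vanishings are then immediate: $x_1$ and the $y_{p,k}$ lie in the bottom row $^KE_r^{*,0}$, and $d_r$ sends that row into $^KE_r^{*+r,1-r}=0$. It remains to identify $d_3(v)$ and the higher differentials on the classes $x_1v^m$; the claim that everything else follows from these by the Leibniz rule uses only that $d_r$ is trivial on the base and that $H^1(K(\Zz,3))=H^2(K(\Zz,3))=0$.

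For the fundamental transgression: $v$ spans $^KE_3^{0,2}$, and the only differential acting nontrivially on position $(0,2)$ is $d_3$, whose target is $^KE_3^{3,0}=H^3(K(\Zz,3))=\Zz\{x_1\}$. Since $v$ cannot survive, $d_3(v)$ generates $H^3(K(\Zz,3))$, and after possibly replacing $x_1$ by $-x_1$ we get $d_3(v)=x_1$. Multiplicativity then gives $d_3(v^m)=m\,x_1v^{m-1}$ and $d_3(x_1v^m)=-m\,x_1^2v^{m-1}$, which determines $^KE_4$; in particular the $v$-column dies completely at page $3$ and the surviving positive-degree classes live in columns $s\geq 3$, the only ones in column $3$ being the $x_1v^m$.

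The higher differentials are extracted by an induction organized by the constraint $^KE_\infty=\Zz$ together with the known ring $H^*(K(\Zz,3))\cong H^*(C(3))$ recalled above (with the explicit low-degree forms of Propositions \ref{prop:K_3 below 14} and \ref{prop:K_3 odd prime}). Fix $m=lp^e-1$ with $\gcd(l,p)=1$. One shows by induction on $k$, for $0\le k<e$, that at page $2p^{k+1}-1$ the subquotient of $^KE_2^{3,2m}$ surviving there is cyclic, generated by $p^kx_1v^m$: the page-$3$ differential cuts the free group $^KE_3^{3,2m}=\Zz$ down to a finite cyclic group whose $p$-part is $\Zz/p^{e}$, and each of the differentials $d_{2p^{j+1}-1}$ for $j<k$ strips off one further factor of $p$. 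On the other hand $y_{p,k}v^{m-(p^{k+1}-1)}$ (well-defined since $e>k$) has order $p$, and a bidegree count shows the only differential that can hit it issues from $(3,2m)$ and is precisely $d_{2p^{k+1}-1}$; convergence then forces $d_{2p^{k+1}-1}(p^kx_1v^m)$ to be a generator of this $\Zz/p$, i.e.\ $v^{m-(p^{k+1}-1)}y_{p,k}$ up to a unit. The same conclusion, with all signs and coefficients on the nose, comes from a chain-level computation: give $C(3)\otimes\Zz[v]$ the total differential $D$ extending $d_{C(3)}$ by $D(v)=x_1$, observe that this complex filtered by powers of $v$ models the total cochain complex of $K$, hence is acyclic with $v$-filtration spectral sequence isomorphic to $^KE$, and evaluate $D$ on the transgressive representatives $p^kx_1v^m+(\text{corrections of lower }v\text{-order built from the }x_{q,j})$. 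Conceptually, the $y_{p,k}$ are iterated Steenrod-power images of the transgression $x_1$, and one can also deduce the formulas from Kudo's transgression theorem in the mod-$p$ spectral sequence compared with the integral one.

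The main obstacle is exactly the bookkeeping in the last step: proving that the successive differentials out of $^KE_r^{3,2m}$ remove one $p$ at each predicted page and that no other differential meets this column; handling the parity of $l$ (for $l$ even the naive element $x_1v^{lp^e-1}$ is not a $d_3$-cocycle and must be corrected using $2x_1^2=d(x_{2,0})$); and the special behavior at $p=2$, where $y_{2,0}=x_1^2$ and $y_{2,k}=x_{2,k-1}^2$ tie the $2$-primary differentials to the powers of $x_1$. Keeping this accounting finite and unambiguous is precisely what the known ring structure of $H^*(K(\Zz,3))$ supplies.
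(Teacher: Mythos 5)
The paper does not prove this proposition at all: it is quoted verbatim from \cite[Corollary 2.16]{Gu21}, so there is no internal argument to compare against. Your skeleton is the standard one (and is essentially how Gu derives the result): contractibility of the total space forces $^KE_\infty$ to vanish in positive total degree, even rows force all nonzero $d_r$ to have $r$ odd, the bottom row kills all differentials on $x_1$ and $y_{p,k}$, and $d_3(v)=x_1$ is the transgression. Up to and including the computation of $E_4^{3,2m}$ your argument is complete and correct.

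However, as written the proposal is an outline rather than a proof, and the gaps are exactly where the content of the statement lies. First, the inductive claim that ``each of the differentials $d_{2p^{j+1}-1}$ for $j<k$ strips off one further factor of $p$'' is asserted, not established: it requires showing simultaneously that each target $v^{m-(p^{j+1}-1)}y_{p,j}$ survives to page $2p^{j+1}-1$ (is not hit earlier and supports no earlier differential) and that the differential onto it is surjective, and these statements are interlocked across all $m$ and $j$. Second, the claim that ``a bidegree count shows the only differential that can hit $y_{p,k}v^{m-(p^{k+1}-1)}$ issues from $(3,2m)$'' is not a pure parity count: other odd columns of $H^*(K(\Zz,3))$ (e.g.\ the column containing $x_1^3$, or columns containing products $x_1y_{q,j}$) sit in positions from which odd-length differentials could in principle reach column $2p^{k+1}+2$, and ruling them out requires knowing those classes die earlier --- which is part of the same global induction. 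Third, your proposed shortcut --- that $C(3)\otimes\Zz[v]$ with $D(v)=x_1$, filtered by powers of $v$, is acyclic and has filtration spectral sequence isomorphic to $^KE$ --- is a genuinely stronger statement than the isomorphism $H^*(K(\Zz,3))\cong H^*(C(3))$ recalled in Section \ref{sec:SS}, and is itself essentially equivalent to what is being proved; it cannot be ``observed.'' Since you explicitly defer all three points as ``bookkeeping,'' the proposal does not yet constitute a proof; the clean way to discharge the statement in this paper is the route the paper takes, namely citing \cite{Gu21}, where this accounting is carried out.
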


To describe the differential of $^TE^{*,*}_*$, we first rewrite  $H^*(BT^n)=\Zz[v_1,\dots,v_n]$ as $\Zz[v_1-v_n,\dots,v_{n-1}-v_n,v_n]$. Then an element in $^TE^{*,*}_*$ can be written as $f(v_n)\xi$, where $\xi\in H^*(K(\Zz,3))$ and
$f(v_n)=\sum_ia_iv_n^i$ is a polynomial in $v_n$ with coefficients $a_i\in\Zz[v_1-v_n,\dots,v_{n-1}-v_n]$. Let $v_j'=v_j-v_n$ for $1\leq j\leq n-1$, and write 
$a_i=\sum_{t_1,\dots,t_{n-1}}k_{i,t_1,\dots,t_{n-1}}(v'_1)^{t_1}\cdots (v_{n-1}')^{t_{n-1}}$, $k_{i,t_1,\dots,t_{n-1}}\in\Zz$.
\begin{prop}[{\cite[Proposition 3.3 and Remark 3.7]{Gu21}}]\label{prop:T}
In the above notation, the differential $^Td_r^{*,*}$ is given by 
\[^Td_r^{*,*}(f(v_n)\xi)=\sum_i\sum_{t_1,\dots,t_{n-1}} {^K}d_r^{*,*}(k_{i,t_1,\dots,t_{n-1}}\xi v_n^i)\cdot(v'_1)^{t_1}\cdots (v_{n-1}')^{t_{n-1}},\]
where where ${^K}d_r(k_{i,t_1,\dots,t_{n-1}}\xi v_n^i)$ is simply ${^K}d_r(k_{i,t_1,\dots,t_{n-1}}\xi v^i)$ with $v$ replaced by $v_n$. 
In particular, the differential $^Td_3^{0,t}$ is given by the ``formal devergence''
\[\nabla:=\sum_{i=1}^n(\partial/\partial v_i):H^t(BT^n)\to H^{t-2}(BT^n)\]
in such a way that $^Td^{0,*}_3=\nabla(-)\cdot x_1$.
Moreover, the spectral sequence $^TE^{*,*}_*$ degenerates at \[
^TE^{0,*}_4={^TE}^{0,*}_\infty=\Zz[v_1',\dots,v_{n-1}'].
\] 
\end{prop}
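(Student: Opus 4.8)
The plan is to deduce everything from the two propositions on the spectral sequence $^KE$ (Proposition \ref{prop:differentials K}) and the change-of-variables set-up, together with the commutative diagram \eqref{diag:SS}. First I would justify the formula for $^Td_r^{*,*}$ by a naturality argument. The map $\Phi:{^K}E\to{^T}E$ of spectral sequences coming from the left square of \eqref{diag:SS} is, on $E_2$-pages, the homomorphism $H^*(BS^1)\otimes H^*(K(\Zz,3))\to H^*(BT^n)\otimes H^*(K(\Zz,3))$ induced by $B\varphi^*(v_i)=v$; but in the rewritten coordinates $\Zz[v_1',\dots,v_{n-1}',v_n]$ we instead want the section-like splitting that sends $v\mapsto v_n$ and kills all $v_j'$. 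The key observation is that $H^*(BT^n)$ is a free module over $\Zz[v_1',\dots,v_{n-1}']$ with basis the powers $v_n^i$, and the monomials $(v_1')^{t_1}\cdots(v_{n-1}')^{t_{n-1}}$ sit in the base of the fibration factor trivially (they are $d_r$-cycles surviving to $E_\infty$, being in the image of $H^*(BPT^n)$, or more elementarily they are permanent cycles since all differentials are $H^*(K(\Zz,3))$-linear and these classes pull back from $BU_n$-type data). Hence by $\Zz[v_1',\dots,v_{n-1}']$-linearity of the differentials, $^Td_r$ is determined by its values on $v_n^i\xi$, and on such elements it agrees with $^Kd_r(v^i\xi)|_{v\mapsto v_n}$ by naturality under $\Phi$. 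This gives the displayed formula.

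Next I would specialize to $r=3$. By Proposition \ref{prop:differentials K}, $^Kd_3(v)=x_1$ and $d_3(x_1)=d_3(y_{p,k})=0$, so by the Leibniz rule $^Kd_3(v^i\xi)=iv^{i-1}x_1\xi$ for any $\xi\in H^*(K(\Zz,3))$ with $d_3\xi=0$; in particular $^Kd_3(v^i)=iv^{i-1}x_1$. Plugging into the formula: writing $f(v_n)=\sum_i a_i v_n^i$ with $a_i\in\Zz[v_1',\dots,v_{n-1}']$, we get $^Td_3^{0,t}(f(v_n))=\bigl(\sum_i i\,a_i v_n^{i-1}\bigr)x_1=(\partial f/\partial v_n)\,x_1$. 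It remains to identify $\partial/\partial v_n$ acting on $\Zz[v_1',\dots,v_{n-1}',v_n]$ with the formal divergence $\nabla=\sum_{i=1}^n\partial/\partial v_i$ acting on $\Zz[v_1,\dots,v_n]$: this is the chain rule. Under $v_j=v_j'+v_n$ ($j<n$), $v_n=v_n$, one has $\partial/\partial v_i = \partial/\partial v_i'$ for $i<n$ and $\partial/\partial v_n = \partial/\partial v_n - \sum_{j<n}\partial/\partial v_j'$ in the primed coordinates — wait, more carefully: a polynomial $g(v_1,\dots,v_n)$ equals $\tilde g(v_1',\dots,v_{n-1}',v_n)$ and $\partial \tilde g/\partial v_n = \sum_{i=1}^n (\partial g/\partial v_i)(\partial v_i/\partial v_n) = \sum_{i=1}^n \partial g/\partial v_i = \nabla g$, since $\partial v_i/\partial v_n = 1$ for all $i$. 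So $^Td_3^{0,*}=\nabla(-)\cdot x_1$ exactly.

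Finally, for degeneration at the $E_4^{0,*}$ line: on the $s=0$ column, $^TE_2^{0,t}=H^t(BT^n)$, and $^Td_3^{0,t}=\nabla(-)x_1$ lands in $^TE_3^{3,t-2}$. I would show $^TE_4^{0,*}=\ker(\nabla)=\Zz[v_1',\dots,v_{n-1}']$: the divergence $\nabla$ is surjective onto $\Zz[v_1,\dots,v_n]$ with kernel exactly the $v_n$-independent polynomials in the primed coordinates — this is the standard fact that $\partial/\partial v_n$ on $R[v_n]$ ($R=\Zz[v_1',\dots,v_{n-1}']$) has kernel $R$ and, since we are over $\Zz$, has image $\bigoplus_{i\geq 0}(i+1)R\cdot v_n^i$, not all of $R[v_n]$ — so actually $d_3^{0,*}$ is not surjective, but its cokernel lands in higher columns and one checks using Proposition \ref{prop:differentials K} (the higher differentials $d_{2p^{k+1}-1}$ hitting $y_{p,k}$) that these cokernel classes are killed at later pages, leaving nothing on the $s\ge 1$ part of the spectral sequence that can map back to column $0$; and no differential $d_r$, $r\ge 4$, leaves the $s=0$ column since $^Td_4^{0,*}=0$ (the target $^TE_4^{4,*}$ is already an $H^*(K(\Zz,3))$-module quotient killed appropriately) — more robustly, one invokes the known structure of $H^*(BPT^n)$, or argues that $\Zz[v_1',\dots,v_{n-1}']$ is the image of $H^*(BPT^n)\to H^*(BT^n)$ (the classes $v_i-v_n$ are Chern roots differences, pulled back from $BPU_n$), hence consists of permanent cycles, forcing $^TE_4^{0,*}=^TE_\infty^{0,*}\supseteq\Zz[v_1',\dots,v_{n-1}']$; combined with $\ker\nabla=\Zz[v_1',\dots,v_{n-1}']$ this gives equality and degeneration.

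The main obstacle I anticipate is the last step: cleanly proving that nothing further happens on the $s=0$ column after $E_4$ — i.e. that no higher differential $^Td_r^{0,*}$ ($r\ge 4$) is nonzero and nothing maps into column $0$. The cited source (Gu, Proposition 3.3 and Remark 3.7) presumably handles this via the explicit $^KE$ differentials and the module structure; the slick route is the "permanent cycles from $BPT^n$" argument, which reduces the whole claim to the single algebraic identity $\ker\nabla=\Zz[v_1',\dots,v_{n-1}']$ once one knows the image of $H^*(BPT^n)$ in $H^*(BT^n)$ is exactly the ring of polynomials in the differences $v_i-v_n$. Everything else is the chain rule and naturality.
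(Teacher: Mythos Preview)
The paper does not prove this proposition; it is quoted from \cite[Proposition~3.3 and Remark~3.7]{Gu21} without argument, so there is nothing on the paper's side to compare against directly. Your chain-rule identification $\partial/\partial v_n=\nabla$ and your degeneration argument (the edge map identifies $^TE_\infty^{0,*}$ with $\im\big(H^*(BPT^n)\to H^*(BT^n)\big)=\Zz[v_1',\dots,v_{n-1}']=\ker\nabla={^TE}_4^{0,*}$) are both correct and clean.

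The gap is in your first step. You write ``$\Phi:{^K}E\to{^T}E$'' and invoke ``naturality under $\Phi$'' to pin down $^Td_r(v_n^i\xi)$, but the map of fibrations $\Phi:K\to T$ from \eqref{diag:SS} induces $\Phi^*:{^TE}\to{^KE}$, the opposite direction; that only tells you what $^Td_r(v_n^i\xi)$ becomes after setting $v_j'\mapsto 0$, and does not rule out extra terms involving the $v_j'$. What is actually needed is a map of fibrations $T\to K$ over the identity on $K(\Zz,3)$. Such a map exists because the classifying map $BPT^n\to K(\Zz,3)$ is null-homotopic (since $H^3(BPT^n)=0$), so $T$ is fiber-homotopy equivalent to the product of $K$ with the trivial fibration on $BPT^n$; equivalently, the splitting $T^n\cong S^1\times PT^n$ via $(t_1,\dots,t_n)\mapsto(t_n;\,t_1t_n^{-1},\dots,t_{n-1}t_n^{-1})$ realizes $^TE\cong{^KE}\otimes H^*(BPT^n)$ with differentials ${^Kd_r}\otimes 1$, which is exactly the displayed formula. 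Your phrase ``section-like splitting'' gestures at this, but it must be justified as a map of fibration sequences (equivalently, via the nullhomotopy of $BPT^n\to K(\Zz,3)$), not merely as a ring map on the $E_2$-page. With that one repair your outline goes through.
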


Here is a corollary of Proposition \ref{prop:T}.

\begin{cor}[{\cite[Corollary 3.4]{Gu21}}]\label{cor:U^d_3}
	For $c_k\in H^*(BU_n)\cong {^U}E_3^{0,*}$, $1\leq k\leq n$, we have
	\begin{equation}\label{eq:nabla}
		^Ud_3(c_k)=(n-k+1)c_{k-1}x_1.
	\end{equation}
\end{cor}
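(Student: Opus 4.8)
The plan is to pull the computation back to the torus fibration $T$, where Proposition~\ref{prop:T} supplies an explicit formula for $d_3$. The morphism of fibrations $\Psi$ in \eqref{diag:SS} induces a morphism of Serre spectral sequences $\Psi^*\colon {}^UE_*^{*,*}\to {}^TE_*^{*,*}$, and in particular $\Psi^*\circ {}^Ud_3={}^Td_3\circ\Psi^*$. On the fibre row $E_3^{0,*}=E_2^{0,*}$ the map $\Psi^*$ is the classical inclusion $B\psi^*\colon H^*(BU_n)=\Zz[c_1,\dots,c_n]\hookrightarrow\Zz[v_1,\dots,v_n]=H^*(BT^n)$ sending $c_k$ to $\sigma_k(v_1,\dots,v_n)$. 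Since $B\psi^*$ is a split injection in each degree (the cokernel $H^*(BT^n)/\operatorname{im}B\psi^*$ is a free abelian group, being a free module of rank $n!$ over the symmetric polynomials), the identification $E_2^{s,t}=H^s(K(\Zz,3))\otimes H^t(BU_n)$ shows that $\Psi^*=\operatorname{id}\otimes B\psi^*$ remains injective on every $E_3^{s,t}$. Hence it suffices to compute ${}^Td_3\bigl(\sigma_k(v_1,\dots,v_n)\bigr)$.

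By the last part of Proposition~\ref{prop:T}, ${}^Td_3^{0,*}(-)=\nabla(-)\cdot x_1$ with $\nabla=\sum_{i=1}^n\partial/\partial v_i$, so the task reduces to evaluating $\nabla\sigma_k(v_1,\dots,v_n)$. Using $\partial\sigma_k/\partial v_i=\sigma_{k-1}(v_1,\dots,\widehat{v_i},\dots,v_n)$ and summing over $i$, each squarefree degree-$(k-1)$ monomial of $\sigma_{k-1}(v_1,\dots,v_n)$ is counted once for every variable it does not involve, i.e. $n-(k-1)=n-k+1$ times; therefore $\nabla\sigma_k=(n-k+1)\sigma_{k-1}$. (Equivalently, apply $\nabla$ to $\prod_{i=1}^n(1+tv_i)$ and compare coefficients of $t^k$.) Consequently
\[
{}^Td_3(\Psi^*c_k)=(n-k+1)\sigma_{k-1}\,x_1=\Psi^*\bigl((n-k+1)c_{k-1}x_1\bigr),
\]
and injectivity of $\Psi^*$ in bidegree $(3,2k-2)$ gives ${}^Ud_3(c_k)=(n-k+1)c_{k-1}x_1$.

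There is essentially no serious obstacle here: the real content is already packaged in Proposition~\ref{prop:T}, and the only two points deserving a word of care are (i) that $\Psi^*$ is a genuine morphism of spectral sequences, hence commutes with $d_3$ — immediate from functoriality of the Serre spectral sequence applied to the commutative square $\Psi$ in \eqref{diag:SS} — and (ii) the injectivity of $\Psi^*$ on the $E_3$-page, for which one invokes the standard fact that $H^*(BU_n)\to H^*(BT^n)$ splits off a free complement. The combinatorial identity $\nabla\sigma_k=(n-k+1)\sigma_{k-1}$ is classical and elementary.
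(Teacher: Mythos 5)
Your argument is correct and is essentially the route the paper intends: the corollary is stated as an immediate consequence of Proposition~\ref{prop:T}, pulling $c_k$ back to $\sigma_k(v_1,\dots,v_n)$ via the (injective) map $\Psi^*$ and applying ${}^Td_3^{0,*}=\nabla(-)\cdot x_1$ together with the elementary identity $\nabla\sigma_k=(n-k+1)\sigma_{k-1}$. The two points you flag (naturality of $d_3$ and injectivity of $\Psi^*$ on the relevant $E_3$-term) are exactly the right ones to check, and your justifications are fine.
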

The following result of Crowley-Gu says that $^Ud_r^{0,*}$ are trivial for $r\geq 4$.
\begin{thm}[{\cite[Theorem 1.3]{CG21}}]\label{thm:E_4}
	$^UE_4^{0,*}={^U}E_\infty^{0,*}\cong H^*(BPU_n)/torsion.$
\end{thm}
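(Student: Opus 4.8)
The plan is to prove two things separately: (A) the edge homomorphism $\rho\colon H^*(BPU_n)\to H^*(BU_n)$ (restriction to the fibre of $U\colon BU_n\to BPU_n\to K(\Zz,3)$) has image $^UE_\infty^{0,*}$ and kernel exactly the torsion subgroup, so $^UE_\infty^{0,*}\cong H^*(BPU_n)/\mathrm{torsion}$; and (B) $^Ud_r^{0,*}=0$ for all $r\geq4$, so $^UE_4^{0,*}={}^UE_\infty^{0,*}$. For (A): that $^UE_\infty^{0,*}=\mathrm{im}\,\rho$ is the standard description of the edge map, and since $H^*(BU_n)=\Zz[c_1,\dots,c_n]$ is torsion-free, $\ker\rho$ contains all torsion; for the reverse I would check that $\rho$ is rationally injective. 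By commutativity of the square in \eqref{diag:SS} with corners $BT^n,BPT^n,BU_n,BPU_n$, one has $B\psi^*\circ\rho=\theta^*\circ(B\psi')^*$, where $\theta\colon BT^n\to BPT^n$; rationally $(B\psi')^*\colon H^*(BPU_n;\Qq)\to H^*(BPT^n;\Qq)$ is the isomorphism onto the $\Sigma_n$-invariants, and $\theta^*\colon H^*(BPT^n;\Qq)\hookrightarrow H^*(BT^n;\Qq)$ is injective, being induced by the surjection of tori $T^n\to PT^n$. Hence $\rho$ is injective after $\otimes\,\Qq$, so $\ker\rho$ is exactly the torsion subgroup.

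For (B), I would first identify $^UE_4^{0,*}$ with $\ker D$, where $D$ is the $\Zz$-linear derivation of $\Zz[c_1,\dots,c_n]$ determined by $D(c_k)=(n-k+1)c_{k-1}$, $c_0:=1$. Since $K(\Zz,3)$ is $2$-connected with $H_3(K(\Zz,3))\cong\Zz$ (so $H^3(K(\Zz,3))\cong\Zz$) and $H^*(BU_n)$ is free, $^UE_3^{r,s}\cong H^r(K(\Zz,3))\otimes H^s(BU_n)$; in particular $^UE_3^{3,*}\cong H^*(BU_n)$ is torsion-free, and under this identification Corollary~\ref{cor:U^d_3} together with the Leibniz rule makes $^Ud_3^{0,*}$ equal to $D$, whence $^UE_4^{0,*}=\ker({}^Ud_3^{0,*})=\ker D$ (nothing maps into the $0$-th column). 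By (A), $\mathrm{im}\,\rho={}^UE_\infty^{0,*}\subseteq{}^UE_4^{0,*}=\ker D$, so $^UE_4^{0,*}={}^UE_\infty^{0,*}$ reduces to proving the reverse inclusion $\ker D\subseteq\mathrm{im}\,\rho$, i.e.\ that every polynomial in $\ker D$ lifts to a class on $BPU_n$. Part of this is automatic by sparseness: since $H^*(BU_n)$ sits in even degrees, a nonzero $^Ud_r^{0,t}$ needs both $t$ and $t-r+1$ even, hence $r$ odd, and for $r\in\{5,7\}$ the target $^UE_r^{r,*}$ is a subquotient of $H^r(K(\Zz,3))\otimes H^*(BU_n)=0$ by Proposition~\ref{prop:K_3 below 14}; thus $^Ud_r^{0,*}=0$ for $4\leq r\leq8$. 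This breaks at $r=9$ (for instance $H^9(K(\Zz,3))\cong\Zz/2\langle x_1^3\rangle$ and $^UE_9^{9,0}\neq0$ when $n$ is even), so the lifting statement is genuinely needed. Rationally it does hold: since $H^*(K(\Zz,3);\Qq)\cong\Lambda_\Qq[x_1]$, the rationalized spectral sequence lives in columns $0$ and $3$ and hence collapses at $E_4$, giving $\mathrm{im}(\rho\otimes\Qq)={}^UE_\infty^{0,*}\otimes\Qq={}^UE_4^{0,*}\otimes\Qq=\ker(D\otimes\Qq)$; equivalently, $\mathrm{im}\,\rho$ has finite index in $\ker D$ in each degree.

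The content of the theorem, and the point I expect to be the main obstacle, is that this index is always $1$ --- the integral surjectivity of $H^*(BPU_n)$ onto $\ker D$. To attack it I would produce explicit classes on $BPU_n$ landing in $\ker D$: the tensor-square (adjoint) representation $U_n\to U(\mathrm{End}\,\Cc^n)$, $A\mapsto(B\mapsto ABA^{-1})$, has kernel the central $S^1$ and hence factors through $PU_n$, yielding a complex bundle on $BPU_n$ whose Chern classes restrict under $\rho$ to the Chern classes of $\mathbf E\otimes\mathbf E^*$ (with $\mathbf E$ the universal bundle on $BU_n$), which automatically lie in $\ker D$. These do not generate $\ker D$ over $\Zz$ by themselves: already in degree $4$, where the degree-$4$ part of $\ker D$ is a rank-one summand of $H^4(BU_n)$ generated --- for $n$ odd --- by $\frac{n-1}{2}c_1^2-nc_2$, one computes $c_2(\mathbf E\otimes\mathbf E^*)=2nc_2-(n-1)c_1^2$, which is $-2$ times that generator. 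So one must supplement these with further classes from $H^*(BPU_n)$ to obtain a full $\Zz$-algebra generating set of $\ker D$; carrying out this degree-by-degree integrality bookkeeping is the crux, and is precisely where a careful analysis of the operator $D$ --- equivalently of $\nabla=\sum_{i=1}^n\partial/\partial v_i$ acting on symmetric polynomials --- enters. Alternatively, one may invoke the computation of $\mathrm{im}\,\rho$ by Crowley--Gu in \cite{CG20}, which yields $\mathrm{im}\,\rho=\ker D$ directly.
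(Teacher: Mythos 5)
This theorem is quoted in the paper from \cite{CG21} and is not proved there, so your attempt has to stand on its own. Your reductions are all correct and cleanly organized: the identification of $^UE_\infty^{0,*}$ with the image of the edge map $\rho\colon H^*(BPU_n)\to H^*(BU_n)$; the rational injectivity of $\rho$ via the maximal-torus square in \eqref{diag:SS} (so that $\ker\rho$ is exactly the torsion subgroup); the identification $^UE_4^{0,*}=\ker\nabla$ from Corollary \ref{cor:U^d_3} and the freeness of $H^*(BU_n)$; the parity/sparseness argument killing $^Ud_r^{0,*}$ for $4\leq r\leq 8$; and the observation that the theorem is equivalent to the single integral statement $\ker\nabla\subseteq\im\rho$. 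Your degree-$4$ computation $c_2(\mathbf{E}\otimes\mathbf{E}^*)=2nc_2-(n-1)c_1^2=-2\bigl(\tfrac{n-1}{2}c_1^2-nc_2\bigr)$ is also correct and rightly shows that the adjoint representation alone does not generate $\ker\nabla$ integrally.

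The problem is that the argument stops exactly where the content of the theorem begins. The inclusion $\ker\nabla\subseteq\im\rho$ --- equivalently, that the finite index you establish rationally is $1$ in every degree --- is never proved: you label it ``the crux,'' propose supplementing the adjoint Chern classes by further classes without producing them or proving they exist, and then offer as an alternative to ``invoke the computation of $\im\rho$ by Crowley--Gu.'' That computation \emph{is} the theorem being proved (it is the content of \cite{CG21}, and the description of $\im\rho$ in \cite{CG20} rests on it), so this last step is circular. As written, the proposal is a correct and useful reduction of the theorem to its essential claim, together with evidence that the most naive construction of lifts fails, but it is not a proof; the missing ingredient is precisely the integral surjectivity of $H^*(BPU_n)\to\ker\nabla$, and no mechanism for establishing it is supplied.
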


Let $\chi:BPU_n\to K(\Zz,3)$ be the fibration. For simplicity, we abuse the notations $x_1$, $y_{p,k}$ in $H^*(K(\Zz,3))$ to denote $\chi^*(x_1)$, $\chi^*(y_{p,k})$ in $H^*(BPU_n)$ whenever there is no risk of ambiguity.
The following result of Gu will be useful for the computation of $^UE$.

\begin{thm}[{\cite[Theorem 1.1]{Gu21b}}]\label{thm:torsion elements}
	$y_{p,k}\neq 0$ in $H^*(BPU_n)$ for all odd prime divisors $p$ of $n$ and $k\geq 0$.  
\end{thm}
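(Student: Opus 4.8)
The plan is to prove the equivalent statement $\chi^*(y_{p,k})\neq 0$ by restricting along a map out of the classifying space of an elementary abelian $p$-group, where everything becomes explicit. Since $p\mid n$, the block-diagonal embedding $U_p\hookrightarrow U_n$, $A\mapsto\mathrm{diag}(A,\dots,A)$ ($n/p$ copies), carries the center of $U_p$ into the center of $U_n$, and hence induces $PU_p\hookrightarrow PU_n$. Composing with the inclusion $(\Zz/p)^2\hookrightarrow PU_p$ coming from the $p$-dimensional Heisenberg projective representation — generated by the cyclic shift matrix and $\mathrm{diag}(1,\zeta,\dots,\zeta^{p-1})$, $\zeta=e^{2\pi i/p}$ — gives a homomorphism $\rho\colon(\Zz/p)^2\to PU_n$. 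Because $U_p\hookrightarrow U_n$ is the identity on centers, $\chi$ is natural for it, so $\chi\circ B\rho\colon B(\Zz/p)^2\to K(\Zz,3)$ classifies the central $S^1$-extension of $(\Zz/p)^2$ pulled back from $1\to S^1\to U_n\to PU_n\to 1$, namely the one in which the two generators lift to matrices $P,Q$ with $PQ=\zeta QP$. Thus it suffices to show $(B\rho)^*\chi^*(y_{p,k})\neq 0$ in $H^*(B(\Zz/p)^2)$.

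First I would identify the two classes at play. Write $H^*(B(\Zz/p)^2;\Zz/p)=\Lambda[s_1,s_2]\otimes\Zz/p[t_1,t_2]$ with $\deg s_i=1$, $\deg t_i=2$, $\beta s_i=t_i$. The extension class above is the unique nonzero element of $H^3(B(\Zz/p)^2;\Zz)\cong\Zz/p$, equal up to a unit to $\w\beta(s_1s_2)$; hence the image $\bar x_1$ of $x_1$ in $H^3(B(\Zz/p)^2;\Zz/p)$ is $\beta(s_1s_2)=s_2t_1-s_1t_2$. On the other side, Cartan's computation of $H^*(K(\Zz,3))$ — visible in the DGA $C(3)$, where $d(x_{p,k})=py_{p,k}$ identifies $x_{p,k}$ with an integral lift of the admissible Steenrod monomial $\PP^{p^k}\PP^{p^{k-1}}\cdots\PP^1\bar\iota_3$ (of degree $2p^{k+1}+1$ and excess $2$) on the mod-$p$ fundamental class — shows that $y_{p,k}$ equals, up to a $p$-adic unit, the integral Bockstein $\w\beta\big(\PP^{p^k}\cdots\PP^1\bar\iota_3\big)$.

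The computation is then short. By naturality of the Steenrod operations, $(B\rho)^*\chi^*(y_{p,k})=\w\beta\big(\PP^{p^k}\cdots\PP^1\bar x_1\big)$ up to a unit. Since the total power operation $\PP=\sum_i\PP^i$ is a ring endomorphism with $\PP(t_j)=t_j+t_j^{\,p}$, and $\PP^i$ annihilates the degree-one classes $s_j$ for $i>0$, an induction on $k$ gives $\PP^{p^k}\cdots\PP^1(s_2t_1-s_1t_2)=s_2\,t_1^{\,p^{k+1}}-s_1\,t_2^{\,p^{k+1}}$. Applying $\beta$ (a derivation with $\beta t_j=0$) yields $t_1^{\,p^{k+1}}t_2-t_1\,t_2^{\,p^{k+1}}$, which is nonzero in $\Zz/p[t_1,t_2]$; hence $\w\beta\big(\PP^{p^k}\cdots\PP^1\bar x_1\big)$ is a nonzero $p$-torsion class in $H^*(B(\Zz/p)^2)$, so $\chi^*(y_{p,k})\neq 0$.

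The main obstacle is the single non-formal input in the second paragraph: that $y_{p,k}$ really is the integral Bockstein of $\PP^{p^k}\cdots\PP^1\bar\iota_3$. Verifying this means unwinding Cartan's determination of $H^*(K(\Zz,3))$ — tracking which generator of $H^*(C(3))$ the class $y_{p,k}$ is and computing its mod-$p$ reduction in the Cartan–Serre basis of $H^*(K(\Zz,3);\Zz/p)$. Everything else (naturality of $\chi$ for the block inclusion, the identification of the pulled-back extension class with $\w\beta(s_1s_2)$, and the Steenrod bookkeeping over $B(\Zz/p)^2$) is routine. A more self-contained but harder alternative would be to argue inside the spectral sequence $^UE$ that $y_{p,k}\in{}^UE_2^{2p^{k+1}+2,\,0}=H^{2p^{k+1}+2}(K(\Zz,3))$ survives, i.e. is not in the image of any differential $^Ud_r$; controlling all such differentials into that bidegree is considerably more delicate, which is precisely the kind of bookkeeping the rest of this paper is devoted to.
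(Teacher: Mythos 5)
This theorem is not proved in the paper at all: it is quoted verbatim from \cite[Theorem 1.1]{Gu21b}, so there is no internal proof to compare against. Your argument is correct and is essentially the proof given in that cited reference: restrict along $(\Zz/p)^2\hookrightarrow PU_p\hookrightarrow PU_n$, identify the pulled-back class of $x_1$ with $\w\beta(s_1s_2)$, and use the identification of $y_{p,k}$ (via Cartan's computation of $H^*(K(\Zz,3))$) with the integral Bockstein of the admissible monomial $\PP^{p^k}\cdots\PP^1\bar\iota_3$ to land on the visibly nonzero class $t_1^{p^{k+1}}t_2-t_1t_2^{p^{k+1}}$; the one non-formal input you flag is indeed the crux, and it is exactly what \cite{Gu21b} verifies.
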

	
	\section{Some results on Schur polynomials}\label{sec:Schur}
	\subsection{Young diagrams/partitions and Schur polynomials} Young diagrams are indexed by integer partitions.
	Given a partition $\lambda=(\lambda_1,\lambda_2,\dots,\lambda_k)$ of a positive integer, where $\lambda_1\geq\lambda_2\geq\dots\geq\lambda_k$ and each $\lambda_i$ is a positive integer, the \emph{Young diagram} corresponding to $\lambda$, which we still denote by $\lambda$, is the collection of boxes, arranged in left-justified rows, with the length of the $i$th row is $\lambda_i$. For example, the partition $(4, 2, 1)$ of $7$ corresponds to the Young diagram
	\[\yng(4,2,1)\]
	 Let $\mathcal{Y}$ be the set of Young diagrams.  It is often convenient to add one or more zeros at the end of the partition sequence, and to identify sequences that differ only by such zeros. The notation $\lambda\vdash n$ is used to say that $\lambda$ is a partition of $n$, and $\#\lambda$ denote the number of parts (rows) of the partition (Young diagram) $\lambda$. For each partition $\lambda$, the \emph{conjugate} of $\lambda$, denoted $\bar\lambda$, is the partition by listing the number of boxes of the Young diagram $\lambda$ in each column. 
	 So, the conjugate of the above partition is $(3,2,1,1)$. We write $\mu\subset\lambda$ if the Young diagram of $\mu$ is contained in that of $\lambda$.
	 
	 $\PP(n)$, the \emph{partition function}, denote the number of different partitions of a positive integer $n$. Set $\PP(0)=1$ by convention.

	A \emph{(Young) tableau} is obtained by filling in the boxes of the Young diagram with positive integers, e.g. $\young(2134,52,3)$. A tableau is called \emph{semistandard}, if the entries weakly increase along each row and strictly increase down each column. For examplex, $\young(1224,23,3)$ is a semistandard tableau.
	
	For a partition $\lambda=(\lambda_1,\lambda_2,\dots,\lambda_n)$, the \emph{Schur polynomial} $s_\lambda(x_1,x_2,\dots,x_n)$ in $n$ variables is a sum of monomials:
	\[s_\lambda(x_1,x_2,\dots,x_n)=\sum_T x^T=\sum_T x_1^{t_1}\cdots x_n^{t_n},\]
	where the summation is over all semistandard tableaux $T$ of shape $\lambda$ using the numbers from $1$ to $n$, and the exponent $t_i$ is the number of times $i$ occurs in $T$. 
	
	Here we recall some well-known properties of Schur polynomials. For an integer $n>0$, let $\Lambda_n$ be the ring of symmetric polynomials over $\Zz$ in $n$ variables. It is known that $\Lambda_n=\Zz[\sigma_1,\dots,\sigma_n]$, where $\sigma_i$ is the $i$th elementary symmetric polynomial in $n$ variables.  For $\Lambda_n$, let $\Lambda_n^d$ be the $\Zz$-submodule of symmetric polynomials of degree $d$.
	\begin{thm}[see \cite{Ful97}]\label{thm:schur}
Schur polynomials are symmetric polynomials. Let $\MM_n^k$ be the set of partitions of $k$ with at most $n$ parts. Then the Schur polynomials $\{s_\lambda:\lambda\in\MM_n^k\}$ form a basis for $\Lambda_n^k$.
	\end{thm}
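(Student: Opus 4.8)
The plan is to follow the classical route through the combinatorial (tableau) definition used in the excerpt: first establish that each $s_\lambda$ is symmetric, then prove a unitriangular expansion of $s_\lambda$ against the monomial symmetric polynomials, and finally read off linear independence and spanning simultaneously.

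For the symmetry of $s_\lambda(x_1,\dots,x_n)$, I would show invariance under each adjacent transposition $x_i\leftrightarrow x_{i+1}$, since these generate $S_n$. The tool is the \emph{Bender--Knuth involution}: given a semistandard tableau $T$ of shape $\lambda$, restrict attention to the cells containing $i$ or $i+1$. In any column in which both $i$ and $i+1$ occur they must occupy two consecutive cells; declare those cells locked. In each row the remaining (unlocked) $i$'s and $(i+1)$'s form a contiguous segment consisting of $p$ copies of $i$ followed by $q$ copies of $i+1$; replace it by $q$ copies of $i$ followed by $p$ copies of $i+1$. One checks that this operation again produces a semistandard tableau of shape $\lambda$, that it is an involution, and that it exchanges the number of $i$'s with the number of $(i+1)$'s while fixing every other entry-count. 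Hence the monomial sum defining $s_\lambda$ is fixed by $x_i\leftrightarrow x_{i+1}$, so $s_\lambda\in\Lambda_n$.

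Next I would record the standard description of $\Lambda_n^k$: it has $\Zz$-basis $\{m_\mu:\mu\in\MM_n^k\}$, where $m_\mu$ is the sum over the $S_n$-orbit of $x^\mu=x_1^{\mu_1}\cdots x_n^{\mu_n}$ — every homogeneous symmetric polynomial of degree $k$ is an integral combination of such orbit sums, and distinct $m_\mu$ have disjoint monomial supports. Then I would prove the expansion $s_\lambda=m_\lambda+\sum_{\mu}K_{\lambda\mu}\,m_\mu$, summed over those $\mu\in\MM_n^k$ strictly dominated by $\lambda$, with nonnegative integer coefficients $K_{\lambda\mu}$ (Kostka numbers) and with the coefficient of $m_\lambda$ equal to $1$. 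The key observation is that in a semistandard tableau an entry equal to $i$ can occur only in rows $1,\dots,i$ (the $r-1$ strictly increasing positive integers above it in its column lie in $\{1,\dots,i-1\}$, forcing the row index $r\le i$); hence all cells with entries $\le i$ lie in the first $i$ rows, and counting cells gives $\mu_1+\cdots+\mu_i\le\lambda_1+\cdots+\lambda_i$ for the content $\mu$ of any tableau of shape $\lambda$, i.e.\ $\mu\preceq\lambda$ in dominance order. Equality throughout forces row $j$ to be filled entirely with $j$'s for every $j$, which is the unique semistandard tableau of shape and content both $\lambda$; this gives $K_{\lambda\lambda}=1$.

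Finally I would conclude: ordering $\MM_n^k$ by any linear refinement of dominance, the transition matrix from $\{s_\lambda\}$ to $\{m_\mu\}$ is unitriangular over $\Zz$, hence invertible over $\Zz$; since both families are indexed by the same finite set $\MM_n^k$, the set $\{s_\lambda:\lambda\in\MM_n^k\}$ is therefore also a $\Zz$-basis of $\Lambda_n^k$. I expect the only real obstacle to be the careful verification that the Bender--Knuth move preserves semistandardness — the bookkeeping about locked cells at the two ends of an unlocked segment, and the interaction between segments in vertically adjacent rows, has to be checked with some attention — whereas the dominance/triangularity half of the argument is routine once the "entries $\le i$ live in the top $i$ rows" principle is in hand.
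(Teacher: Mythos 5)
Your proof is correct and is essentially the standard argument from the cited reference: the paper itself offers no proof of this classical fact, simply citing \cite{Ful97}, and your two steps (the Bender--Knuth involution for invariance under adjacent transpositions, then the unitriangular Kostka expansion $s_\lambda=m_\lambda+\sum_{\mu\prec\lambda}K_{\lambda\mu}m_\mu$ against the monomial basis of $\Lambda_n^k$) are exactly the textbook route. All the key points are in place, including the observation that entries $\leq i$ occupy only the first $i$ rows, which yields dominance and $K_{\lambda\lambda}=1$.
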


	For a Young diagram $\lambda$ and a box $(i,j)\in\lambda$, in the 
	$i$th row and $j$th column, the \emph{hook length} at $(i,j)$, denoted $h_\lambda(i,j)$, is the number of boxes directly below or to
	the right of $(i,j)$ (counting $(i,j)$ once). For example, $h_{(4,2,1)}(1,2)=4$.
	\begin{thm}[see {\cite[Theorem 7.21.2]{Sta99}}]\label{thm:hook}
	For a partition $\lambda=(\lambda_1,\lambda_2,\dots,\lambda_n)$, the number of monomial terms in the Schur polynomial $s_\lambda(x_1,x_2,\dots,x_n)$  is given by
	\[s_\lambda(1,1,\dots,1)=\prod_{(i,j)\in\lambda}\frac{n-i+j}{h_\lambda(i,j)}.\]
    \end{thm}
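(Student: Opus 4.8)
The plan is to deduce the formula from the principal specialization of the bialternant (Weyl character) formula for Schur polynomials, and then to let the specialization parameter tend to $1$. First I would recall the classical bialternant identity
\[s_\lambda(x_1,\dots,x_n)=\det\big(x_i^{\lambda_j+n-j}\big)_{1\le i,j\le n}\Big/\det\big(x_i^{n-j}\big)_{1\le i,j\le n},\]
valid for any partition $\lambda$ with at most $n$ parts (see e.g. \cite{Ful97}); here one pads $\lambda$ with zeros so that it has exactly $n$ weakly decreasing parts. Substituting $x_i=q^{i-1}$ for an indeterminate $q$ turns both determinants into Vandermonde determinants, so that, writing $\ell_j:=\lambda_j+n-j$,
\[s_\lambda(1,q,\dots,q^{n-1})=\frac{\prod_{1\le j<k\le n}(q^{\ell_j}-q^{\ell_k})}{\prod_{1\le j<k\le n}(q^{n-j}-q^{n-k})}\]
(the overall sign produced by the Vandermonde formula is the same in the numerator and the denominator). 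Pulling out $q^{\ell_k}$ from the $(j,k)$-factor upstairs and $q^{n-k}$ from the one downstairs, and using that $\sum_{j<k}\lambda_k=n(\lambda):=\sum_i(i-1)\lambda_i$, this becomes
\[s_\lambda(1,q,\dots,q^{n-1})=q^{\,n(\lambda)}\,\frac{\prod_{1\le j<k\le n}(q^{\,\ell_j-\ell_k}-1)}{\prod_{1\le j<k\le n}(q^{\,k-j}-1)}.\]

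The combinatorial heart of the argument is to rewrite this ratio in hook/content form. Set $(q)_m:=\prod_{a=1}^m(q^a-1)$. Counting how many pairs $j<k$ realize each fixed difference $k-j$ shows that the denominator equals $\prod_{i=0}^{n-1}(q)_i$. Next, the factorization $(q)_{\ell_i}=(q)_{n-i}\prod_{j=1}^{\lambda_i}(q^{\,n-i+j}-1)$, multiplied over $i$, gives $\prod_{i=1}^n(q)_{\ell_i}=\big(\prod_{i=0}^{n-1}(q)_i\big)\prod_{(i,j)\in\lambda}(q^{\,n-i+j}-1)$, where $n-i+j$ is $n$ plus the content of the box $(i,j)$. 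The remaining ingredient is the classical description of hook lengths via the beta numbers $\ell_1>\dots>\ell_n$: for each row $i$ the set of hook lengths $\{h_\lambda(i,j):1\le j\le\lambda_i\}$ is precisely $\{1,2,\dots,\ell_i\}\setminus\{\ell_i-\ell_k:i<k\le n\}$. Splitting, for each $i$, the product of $q^a-1$ over $a\in\{1,\dots,\ell_i\}$ according to this partition and multiplying over $i$ yields
\[\prod_{1\le j<k\le n}(q^{\,\ell_j-\ell_k}-1)\cdot\prod_{(i,j)\in\lambda}(q^{\,h_\lambda(i,j)}-1)=\prod_{i=1}^n(q)_{\ell_i}.\]
Combining the three displayed identities, the factor $\prod_{i=0}^{n-1}(q)_i$ cancels and one obtains the $q$-content formula $s_\lambda(1,q,\dots,q^{n-1})=q^{\,n(\lambda)}\prod_{(i,j)\in\lambda}\dfrac{q^{\,n-i+j}-1}{q^{\,h_\lambda(i,j)}-1}$.

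Finally I would let $q\to1$. Each factor $q^m-1$ has a simple zero at $q=1$ with $\lim_{q\to1}(q^m-1)/(q-1)=m$; since the numerator and the denominator of the product over boxes each contain exactly $|\lambda|$ factors, all powers of $q-1$ cancel and the limit of the product equals $\prod_{(i,j)\in\lambda}\frac{n-i+j}{h_\lambda(i,j)}$, while $q^{\,n(\lambda)}\to1$. This gives $s_\lambda(1,\dots,1)=\prod_{(i,j)\in\lambda}\frac{n-i+j}{h_\lambda(i,j)}$, as asserted.

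The only step above that is not pure bookkeeping is the beta-number description of the hook lengths; I would prove it directly from the abacus / boundary-path picture of $\lambda$ (for the bead at position $\ell_i$, the hook lengths of the corresponding row are exactly the differences $\ell_i-p$ over the empty positions $p<\ell_i$), or else simply cite it, e.g. \cite[\S7.21]{Sta99}. An alternative that avoids $q$-analogues altogether is to note that $s_\lambda(1,\dots,1)=\dim_{\mathbb{C}}V_\lambda$ for the irreducible polynomial $GL_n(\mathbb{C})$-representation of highest weight $\lambda$ and to translate Weyl's dimension formula into the hook-length product; but this rests on the same combinatorial lemma, so I expect that lemma to be the only real (and quite mild) obstacle either way.
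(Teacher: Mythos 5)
Your argument is correct: the principal specialization $x_i=q^{i-1}$ of the bialternant formula, the beta-number description of hook lengths, and the limit $q\to 1$ all check out (in particular the factor counts match, since each side of the box-product has exactly $|\lambda|$ simple zeros at $q=1$). The paper does not prove this statement but cites it to Stanley, and your derivation is essentially the proof given there (principal specialization plus the hook-length/beta-number lemma), so there is nothing to reconcile.
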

    \begin{rem}
    	The formula in Theorem \ref{thm:hook} is the so-called \emph{hook content formula}, which was first obtained by Stanley \cite{Sta71}.
    \end{rem}
	
    \subsection{The operator $\nabla$ on Schur polynomials} Let $\nabla=\sum_{i=1}^n\partial/\partial x_i$ be the linear differential operator acting on the polynomial ring $P_n=\Zz[x_1,\dots,x_n]$.
    Clearly, $\nabla$ preserves symmetric polynomials, so $\nabla$ can be restricted to $\Lambda_n$. 
    Generally, for any commutative ring $R$ with unit, we use the notation $\nabla_R$ to denote the map $\nabla\otimes R:P_n\otimes R\to P_n\otimes R$ or $\Lambda_n\otimes R\to \Lambda_n\otimes R$. In particular, when $R=\Zz_{(p)}$ for a prime $p$, we use the abbreviate notation $\nabla_{(p)}$ for $\nabla_{\Zz_{(p)}}$. $\nabla^i_R$ means the restriction of $\nabla$ to $P_n^i$ or $\Lambda_n^i$.
    The following result in \cite{Nen20} gives a nice description of $\nabla$ on Schur polynomials.
    \begin{thm}[{\cite[Theorem 7]{Nen20}}]\label{thm:nabla}
    	For any Young diagram $\lambda$ with at most $n$ rows, we have:
    	\[\nabla(s_\lambda(x_1,\dots,x_n))=\sum_{\substack{(i,j)\in\mathbb{N}^2\\\lambda'=\lambda-(i,j)\in\mathcal{Y}}}(n+j-i)s_{\lambda'}(x_1,\dots,x_n).\]
    \end{thm}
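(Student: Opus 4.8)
\emph{Proof proposal.} The plan is to derive the formula from the Weyl (bialternant) description of Schur polynomials, which turns the statement into a routine manipulation of alternating determinants. Write $\delta=(n-1,n-2,\dots,1,0)$, and for a sequence $\mu=(\mu_1,\dots,\mu_n)$ of non-negative integers set $a_\mu=\det\bigl(x_i^{\mu_j}\bigr)_{1\le i,j\le n}$; thus $a_\delta=\prod_{i<j}(x_i-x_j)$ is the Vandermonde determinant, and for a partition $\lambda$ with at most $n$ parts one has $s_\lambda=a_{\lambda+\delta}/a_\delta$. This is exactly where the hypothesis $\#\lambda\le n$ is used: otherwise $s_\lambda=0$ in $n$ variables and there is nothing to prove, since in that case every term on the right-hand side is either $0$ or again a Schur polynomial in too many rows.

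The first step is to note that $\nabla=\sum_{i=1}^{n}\partial/\partial x_i$, being a sum of commuting derivations of $P_n$, is itself a derivation: $\nabla(fg)=(\nabla f)g+f(\nabla g)$. I would then apply this to the factorization $a_{\lambda+\delta}=s_\lambda\,a_\delta$. Since every nonzero alternating polynomial is divisible by $a_\delta$ and hence has degree at least $\binom{n}{2}$, while $\nabla a_\delta$ is alternating of degree $\binom{n}{2}-1$, we get $\nabla a_\delta=0$; therefore
\[
\nabla s_\lambda=\frac{\nabla a_{\lambda+\delta}}{a_\delta},
\]
and the whole problem is reduced to computing $\nabla$ on the alternants $a_\mu$.

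The second step is the identity $\nabla a_\mu=\sum_{j=1}^{n}\mu_j\,a_{\mu-e_j}$, where $e_j$ is the $j$-th standard basis vector; this is immediate by writing $a_\mu=\sum_{\sigma\in S_n}\operatorname{sgn}(\sigma)\prod_i x_i^{\mu_{\sigma(i)}}$, differentiating term by term, and re-indexing. Specializing $\mu=\lambda+\delta$, so that $\mu_j=\lambda_j+n-j$ and $\mu$ is strictly decreasing, I would observe that $\mu_j-1\ge\mu_{j+1}$ holds automatically, with equality exactly when $\lambda_j=\lambda_{j+1}$ (setting $\lambda_{n+1}=0$): in the equality case two exponents of $\mu-e_j$ coincide and $a_{\mu-e_j}=0$, while otherwise $\mu-e_j$ is again strictly decreasing and equals $\lambda'+\delta$ with $\lambda'=\lambda-(j,\lambda_j)$ a genuine Young diagram, so $a_{\mu-e_j}=s_{\lambda'}a_\delta$. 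Dividing by $a_\delta$ gives
\[
\nabla s_\lambda=\sum_{\substack{1\le j\le n\\ \lambda_j>\lambda_{j+1}}}(\lambda_j+n-j)\,s_{\lambda-(j,\lambda_j)},
\]
and since the deleted box lies in row $j$ and column $\lambda_j$ one rewrites $\lambda_j+n-j=n+(\lambda_j-j)=n+(\text{column}-\text{row})$, which matches the coefficient $n+j-i$ of the statement; the constraint $\lambda_j>\lambda_{j+1}$ is precisely the condition that $\lambda-(i,j)$ be a Young diagram.

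I expect the only place requiring genuine care to be the bookkeeping in the third step: checking that no sign corrections are needed (whenever $\mu-e_j$ fails to be strictly decreasing it actually has a repeated exponent, so the alternant simply vanishes rather than needing re-sorting) and that the degenerate indices contribute nothing (empty rows of $\lambda$, or $j=n$ with $\lambda_n=0$, give coefficient $\lambda_j+n-j$ that is either zero or multiplies a zero alternant). An alternative would be to first compute $\nabla p_m=m\,p_{m-1}$, deduce $\nabla h_k=(n+k-1)h_{k-1}$ and $\nabla e_k=(n-k+1)e_{k-1}$ from the generating functions $\prod_i(1\pm x_it)^{\pm1}$ (the latter recovering Corollary \ref{cor:U^d_3}), and substitute into the Jacobi--Trudi determinant for $s_\lambda$; this works but is messier, because the resulting coefficients depend on the column index of the Jacobi--Trudi matrix, so I would favour the bialternant route above.
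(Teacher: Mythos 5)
The paper does not prove this statement at all: it is quoted verbatim from \cite[Theorem 7]{Nen20} (with the remark that it is also a special case of the divided-difference formula for Schubert polynomials in \cite{HPSW20}), so there is no internal proof to compare against. Your bialternant argument is correct and complete as a self-contained proof. The three pillars all check out: $\nabla$ is a derivation commuting with the $S_n$-action, so $\nabla a_\delta$ is alternating of degree $\binom{n}{2}-1$ and hence zero, giving $\nabla s_\lambda=(\nabla a_{\lambda+\delta})/a_\delta$; the identity $\nabla a_\mu=\sum_j\mu_j a_{\mu-e_j}$ follows from termwise differentiation of the expanded determinant; and for $\mu=\lambda+\delta$ the vector $\mu-e_j$ either has a repeated entry (exactly when $\lambda_j=\lambda_{j+1}$, so the alternant vanishes with no re-sorting or sign issues) or is again strictly decreasing and equals $\lambda'+\delta$ for the partition $\lambda'$ obtained by deleting the corner box in position $(j,\lambda_j)$, whose coefficient $\lambda_j+n-j$ is precisely $n+(\mathrm{column})-(\mathrm{row})$. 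The degenerate case $j=n$, $\lambda_n=0$ is killed by the coefficient $\mu_n=0$, as you note. This is a clean and standard route; your alternative via $\nabla h_k=(n+k-1)h_{k-1}$ and Jacobi--Trudi would also work but, as you say, the column-dependent coefficients make the determinant expansion messier, and the special case $\nabla e_k=(n-k+1)e_{k-1}$ is exactly the paper's Corollary \ref{cor:U^d_3}.
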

    \begin{rem}
    The formula in Theorem \ref{thm:nabla} can be seen as a special case of the formula for $\nabla$ on \emph{Schubert polynomials} in \cite{HPSW20}. Note that Schur polynomials are special cases of Schubert polynomials. We refer the reader to \cite{Ful97} for background on Schubert polynomials.
    \end{rem}

\section{The cokernel of $\nabla$ on $\Lambda_n$ in low degrees} \label{sec:cokernel}  
	The following Proposition is the main result of this section. 
	\begin{prop}\label{prop:coker}
    	  Let $p$ be an odd prime, and $n>p$  an integer such that $p\mid n$. Then $\nabla$ on $\Lambda_n$ satisfies 
    	 $|\coker\nabla^{k}_{(p)}|\leq p$ for $k=p+2$, $p+3$.
    	 Moreover, if $p\geq5$, then $|\coker\nabla^{k}_{(p)}|\leq p^2$ for $k=p+4$, $p+5$, and
    	 \[|\coker\nabla^{p+6}_{(p)}|\leq\begin{cases}
    	 	p^5,&\text{if }p=5,\\
    	 	p^4,&\text{if }p>5.
    	 \end{cases}\]
    \end{prop}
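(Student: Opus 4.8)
The plan is to bound $\coker\nabla^k_{(p)}$ from above by analyzing the action of $\nabla$ on the Schur basis, using the explicit formula of Theorem \ref{thm:nabla}. First I would set up the relevant degrees: since $p\mid n$, many of the coefficients $n+j-i$ appearing in Theorem \ref{thm:nabla} reduce mod $p$, and the key observation is that for a box $(i,j)$ removed from a partition $\lambda$ with $\lambda\vdash k$ and $k$ small (at most $p+6$), the coefficient $n+j-i$ is a unit in $\Zz_{(p)}$ unless $j-i\equiv 0\pmod p$, which for small $k$ forces $(i,j)$ to be very close to the diagonal (e.g. $i=j$, or $j-i=\pm p$ which can only happen when $k$ is large enough). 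So over $\Zz_{(p)}$ the operator $\nabla^k_{(p)}$ is, up to units, ``mostly'' the sum over corner-box removals with diagonal boxes contributing a factor divisible by $p$. I would make this precise by choosing, in each degree $k$ from $p+1$ down, an ordering of the partitions $\MM_n^k$ (say, by some total order refining dominance or reverse-lexicographic order on the Schur basis) so that the matrix of $\nabla\colon \Lambda_n^k\to\Lambda_n^{k-1}$ is, after localizing at $p$, close to triangular with unit entries on a large sub-block.

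The core of the argument is then a counting/rank estimate. For each $k\in\{p+2,p+3\}$ I would exhibit an explicit $\Zz_{(p)}$-submodule of $\Lambda_n^{k-1}$ of corank $\le 1$ that lies in the image of $\nabla^k_{(p)}$: concretely, order the Schur polynomials $s_\mu$, $\mu\vdash k-1$, and for all but one $\mu$ produce a preimage by peeling off a non-diagonal box (whose coefficient is a $p$-adic unit) from a suitable $\lambda\vdash k$, using Theorem \ref{thm:nabla} to check that the ``error terms'' (other boxes removed from the same $\lambda$) only involve Schur polynomials already handled, so one can solve upper-triangularly. The single exceptional $\mu$ — the one all of whose additions of a box land on the diagonal, forcing a factor of $p=\gcd(p,n)/\gcd(\cdot)$ — accounts for the bound $|\coker\nabla^k_{(p)}|\le p$. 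For $p\ge 5$ and $k=p+4,p+5$ the same scheme leaves two exceptional classes (this is where the second diagonal, or the hook-content divisibility from Theorem \ref{thm:hook}, starts to matter), giving $p^2$; and for $k=p+6$ a careful bookkeeping of how many $\mu\vdash p+5$ fail to have a unit-coefficient preimage, together with the possibility that the relevant $2\times 2$ or larger sub-blocks have determinant divisible by higher powers of $p$, yields $p^4$ in general and the extra factor when $p=5$ (where small-prime coincidences, e.g. $2p+\text{something}\equiv 0$, create one more degenerate box). Throughout, Theorem \ref{thm:hook} is the tool for checking that certain structure constants are nonzero and for computing the precise $p$-adic valuation of the determinants of the small bad blocks.

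I would organize the write-up as a sequence of lemmas: (i) a ``genericity'' lemma stating that removing a box $(i,j)$ from $\lambda\vdash k$ with $k\le p+6$ gives a coefficient $n+j-i$ that is a $p$-adic unit unless $i=j$ (the only near-diagonal case available in this degree range when $p\ge 5$; for $p=3$ one must also allow $j-i=\pm 3$, handled separately and consistently with the sharper claims for $p=3$ being absorbed into the crude bound here); (ii) a triangularity lemma fixing the monomial/Schur order and showing the matrix of $\nabla^k_{(p)}$ is block upper-triangular with an identifiable unit block; (iii) for each target degree $k-1\in\{p+1,\dots,p+5\}$, an explicit enumeration of the partitions $\mu$ not covered by the unit block, with $|\{\text{bad }\mu\}|$ (and, for $k=p+6$, the $p$-adic valuations of the associated minors) read off from the hook-content formula. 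The cokernel bound is then $\prod_{\text{bad }\mu} p^{v_p(\cdot)}$ and the stated inequalities follow.

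\textbf{Main obstacle.} The genuinely delicate step is the $k=p+6$ case: here several partitions of $p+5$ can simultaneously fail to admit a unit-coefficient preimage, and the relevant sub-block of the matrix of $\nabla^{p+6}_{(p)}$ is no longer $1\times 1$, so one cannot just count ``bad'' classes — one must compute (or bound below) the $p$-adic valuation of a nontrivial determinant built from the structure constants $n+j-i$ and the Schur-to-Schur transition coefficients. Pinning down why this valuation is exactly $4$ for $p>5$ and $5$ for $p=5$ — i.e. controlling the small-prime coincidences that make $p=5$ special — is where the real work lies; the rest is a careful but routine bookkeeping exercise with Young diagrams in a bounded range of sizes.
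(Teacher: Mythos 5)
Your overall strategy --- expand $\nabla$ in the Schur basis via Theorem \ref{thm:nabla}, order the partitions so the matrix is essentially triangular, and bound the cokernel by the $p$-adic valuations of the ``bad'' diagonal entries --- is indeed the paper's strategy. But your identification of which entries are bad is wrong, and this propagates. In the workable triangularization (add a box to the end of the first row of each $\mu\vdash k-1$, so the diagonal entry is $n+\mu_1$, coming from the box $(1,\mu_1+1)$ with $j-i=\mu_1$), the non-unit entries are exactly those with $\mu_1\equiv 0\bmod p$, i.e.\ $\mu_1=p$ (plus $\mu_1=2p$ in the single case $p=5$, $k=p+6$). These are boxes with $j-i=p$, which occur for \emph{every} $k$ in the range $p+2\le k\le p+6$, not ``only when $k$ is large enough''; your claim that for $p\ge5$ the only obstruction comes from diagonal boxes $i=j$ is false. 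Worse, a genuinely diagonal box contributes the coefficient $n+0=n$, whose $p$-adic valuation is $r$ with $p^r\,\|\,n$ arbitrary (this is exactly why $\coker\nabla^1_{(p)}=\Zz/p^r$), so a count of diagonal boxes could never produce a bound like $p^{r_k}$ that is independent of $r$. Relatedly, even with the correct bad entries one only gets $|\coker|\mid (n+p)^{r_k}$, and $p^2$ may divide $n+p$; the paper disposes of this by switching to the conjugate scheme (add a box to the first column, diagonal entries $n-\#\mu$), using that $p^2$ cannot divide both $n+p$ and $n-p$. Your proposal does not address this.

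The more serious omission is the mechanism that cuts the naive count down to the stated exponents. The number of partitions of $k-1$ with first part $p$ is $\PP(k-1-p)$, i.e.\ $1,2,3,5,7$ for $k=p+2,\dots,p+6$, whereas the exponents in the statement are $1,1,2,2,4$. The entire content of the paper's proof is Lemma \ref{lem:linear}: for $\lambda\vdash(k-1)$ with $\lambda_1=p$ and $\lambda_2$ \emph{even}, the element $s_\lambda$ plus lex-higher terms lies in $\im\nabla_{(p)}$, proved by an induction that shuttles boxes between the first two rows and exploits that all intermediate coefficients $n\pm a$ with $0<a<p$ are units. Your counts of ``exceptional classes'' happen to equal the right exponents, but they are asserted rather than derived, and your proposed characterization (``the $\mu$ all of whose box-additions land on the diagonal'') does not pick out the correct set --- e.g.\ for $k=p+2$ the exceptional partition is $(p,1)$, which has the addable box $(3,1)$ with unit coefficient $n-2$, but using it breaks the lex-triangularity because its error term $s_{(p-1,1,1)}$ is lex-\emph{smaller}. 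Finally, the ``main obstacle'' you flag for $k=p+6$ (a nontrivial determinant of a large sub-block) does not arise in the paper: the triangular structure plus Lemma \ref{lem:linear} gives the upper bound directly, with the extra factor at $p=5$ coming solely from the one additional partition $(2p)\vdash p+5$ whose diagonal entry is $n+2p$. The sharpness of the bounds is established later by the spectral sequence, not here.
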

    In Section \ref{sec:proof}, we will see that the inequalities in Propostion \ref{prop:coker} are all equalities and the cokernels are all $\Zz/p$-vector spaces.
    
    \begin{rem}\label{rem:coker}
    For an arbitrary prime $p$, the group $\coker\nabla^{k}_{(p)}$ was computed in \cite{Gu21} for $1\leq k\leq p$, and in \cite{GZZZ22} for $k=p+1$.
    It was shown that $\coker\nabla^{k}_{(p)}=0$ for $2\leq k\leq p$ and $\coker\nabla^{p+1}_{(p)}=\Zz/p$, $\coker\nabla^1_{(p)}=\Zz/{p^r}$, where $r$ is the number defined in Theorem \ref{thm:p-torsion}. For $k=p+2$, $p+3$, the result in Propostion \ref{prop:coker}  is also proved in \cite{ZZ24}.
    \end{rem}

    The proof of Proposition \ref{prop:coker} is based on the computation of the coefficient matrix of $\nabla$ on Schur polynomials. To consider this coefficient matrix, we use the \emph{lexicographic ordering} on partitions. That is, $\mu>\lambda$ if the first $i$ for which $\mu_i\neq \lambda_i$, has $\mu_i>\lambda_i$. 
    We also use the total ordering $\succ$ on partitions by setting $\mu\succ\lambda$ if  $\bar\mu>\bar\lambda$.
    Before going into the proof of Proposition \ref{prop:coker}, we give a preliminary lemma.

    \begin{lem}\label{lem:linear}
    	Let $p,n$ be as in Proposition \ref{prop:coker}, and let $\lambda=\{\lambda_1,\dots,\lambda_n\}$ be a partition of $p+k$, $2\leq k\leq p$, such that $\lambda_1=p$ and $\lambda_2$ is even. Then in $\im\nabla_{(p)}^{p+k+1}$, there are elements of the form
    	\[\alpha=s_{\lambda}+\sum_{\mu>\lambda}a_\mu s_\mu\ \text{ and }\ \beta=s_{\bar\lambda}+\sum_{\mu\succ\bar\lambda}b_{\mu} s_{\mu},\ a_\mu,\,b_{\mu}\in\Zz_{(p)}.\]
    \end{lem}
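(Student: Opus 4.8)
The plan is to build the elements $\alpha$ and $\beta$ by applying $\nabla$ to a carefully chosen Schur polynomial and then triangulating with respect to the two orderings $>$ and $\succ$. First I would consider the partition $\lambda^+$ obtained from $\lambda$ by adding one box, chosen so that $\nabla(s_{\lambda^+})$ has $s_\lambda$ as a term with a coefficient that is a unit in $\Zz_{(p)}$. By Theorem~\ref{thm:nabla}, the coefficient of $s_{\lambda'}$ in $\nabla(s_{\lambda^+})$ is $n+j-i$, where $(i,j)$ is the removed box. Since $\lambda_1 = p$, adding a box to the first row gives the box $(1, p+1)$, and the relevant coefficient is $n + (p+1) - 1 = n + p \equiv p \pmod p$, which is \emph{not} a unit --- so that naive choice fails. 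Instead I would add a box in a row $i$ with $j - i \not\equiv 0 \pmod p$; since $2 \le k \le p$ and $\lambda_2$ is even (hence $\lambda$ has a suitable inner corner to add to), such a choice exists, and one checks that $n + j - i \not\equiv 0 \pmod p$ makes the $s_\lambda$-coefficient a unit. Dividing by that unit in $\Zz_{(p)}$ produces an element of $\im\nabla_{(p)}$ of the form $s_\lambda + \sum_\mu c_\mu s_\mu$ where the sum runs over partitions $\mu$ of $p+k$ obtained by removing a different box from $\lambda^+$.

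The key point is then to verify that every such $\mu \neq \lambda$ satisfies $\mu > \lambda$ in lexicographic order. Removing a box from $\lambda^+$ in a row \emph{above} the row where we added the box produces a partition that is lexicographically \emph{larger} than $\lambda$ (it agrees with $\lambda^+$ and hence with $\lambda$ in earlier rows until it exceeds $\lambda_i$ in some earlier row), while removing it from the same row recovers $\lambda$ itself, and removing from a lower row is impossible since $\lambda^+$ has a box only in rows where $\lambda$ does plus the one new corner. This gives $\alpha = s_\lambda + \sum_{\mu > \lambda} a_\mu s_\mu$ directly. For $\beta$, I would run the same argument with the conjugate partition: applying $\nabla$ to $s_{(\bar\lambda)^+}$ for an appropriate added box, the transpose of the above box-position analysis shows that the error terms $\mu$ all satisfy $\bar\mu > \overline{\bar\lambda} = \lambda$ --- wait, more precisely one arranges the added box on $\bar\lambda$ so that the removed boxes in $\nabla(s_{(\bar\lambda)^+})$ conjugate to partitions $\nu$ with $\nu > \lambda$ lexicographically, equivalently $\mu = \bar\nu \succ \bar\lambda$; the parity hypothesis on $\lambda_2$ (equivalently a constraint on the second column of $\bar\lambda$) is exactly what guarantees the needed inner corner of $\bar\lambda$ sits in a row $i$ with $n + j - i$ a $p$-adic unit.

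The main obstacle I anticipate is the bookkeeping around which box can be added: one must simultaneously ensure (a) the resulting diagram $\lambda^+$ (resp.\ $(\bar\lambda)^+$) is a genuine Young diagram with at most $n$ rows so Theorem~\ref{thm:nabla} applies, (b) the coefficient $n+j-i$ of the targeted term is prime to $p$, and (c) all other terms are strictly larger in the appropriate order. Conditions (a) and (c) are combinatorial and should follow from $\lambda_1 = p$ together with $p+k \le 2p$ (so $\lambda$ is not too long or too wide), but condition (b) is where the hypotheses ``$\lambda_1 = p$'' and ``$\lambda_2$ even'' are genuinely used, and I would expect to split into a couple of cases according to the shape of $\lambda$ near its corners. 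Once the correct corner is identified in each case, the rest is the triangular-form argument sketched above, and the existence of $\alpha, \beta$ in $\im\nabla_{(p)}$ follows. I would organize the write-up as: (1) reduce to finding one suitable added box for $\lambda$ and one for $\bar\lambda$; (2) exhibit the box explicitly via the corner structure; (3) check the unit and ordering conditions using Theorem~\ref{thm:nabla}.
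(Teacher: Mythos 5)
Your single-added-box strategy breaks down at the ordering step, and the direction of the error is the crux. If you add a box to row $i_0$ of $\lambda$ to form $\lambda^+$ and then remove a box from a row $i<i_0$ (\emph{above} the added box), the resulting partition $\mu$ agrees with $\lambda$ in rows $1,\dots,i-1$ and has $\mu_i=\lambda_i-1<\lambda_i$, so $\mu<\lambda$ in lexicographic order --- the opposite of what you claim. (It is removal from a row \emph{below} $i_0$ that gives $\mu>\lambda$, and such removals are certainly possible, contrary to your parenthetical.) Since, as you correctly observe, the box cannot be added to row $1$ (the coefficient $n+p$ is not a unit), you are forced to take $i_0\geq 2$; but then $\nabla(s_{\lambda^+})$ generically contains the term obtained by removing the box $(1,p)$, with coefficient $n+p-1$, a $p$-adic unit, and that term is $s_\mu$ with $\mu_1=p-1<p=\lambda_1$, hence $\mu<\lambda$. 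It cannot be absorbed into $\sum_{\mu>\lambda}a_\mu s_\mu$. Concretely, for $p=5$ and $\lambda=(5,2)$ one has $\nabla(s_{(5,3)})=(n+4)s_{(4,3)}+(n+1)s_{(5,2)}$, and $(4,3)<(5,2)$ while $n+4$ is a unit; no single choice of added box avoids this.

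The paper's proof is built precisely to defeat this obstruction, and it is substantially longer than a triangularity check. One forms the whole telescoping family $\lambda^i=(p-i,\lambda_2+i+1,\lambda_3,\dots)$ for $0\leq i\leq l=(p-\lambda_2-1)/2$; each $\nabla(s_{\lambda^i})$ produces the unwanted term with first part $p-i-1$, which is cancelled against the corresponding term of $\nabla(s_{\lambda^{i+1}})$, and the hypothesis that $\lambda_2$ is even is exactly what makes $l$ an integer so that the telescope closes with no leftover bad term at the last stage. (So the parity hypothesis is not about the existence of an addable corner, as you suggest, but about the termination of this cascade; all the coefficients $n+a_i$, $n+b_i$ along the way must also be checked to be units, which uses $p\mid n$ and the range of $i$.) Even after the telescope, secondary remainder terms with $\mu_1<p$ survive; the paper collects them into sets $U_i$ and runs a second induction on $i$ to push them out entirely. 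Your proposal is missing both layers of this argument, so as written it does not establish the lemma.
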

    \begin{proof}
    First we prove the existence of $\alpha\in\im\nabla_{(p)}^{p+k+1}$ of the given form.	Since $p$ is odd and $\lambda_2$ is even, $p>\lambda_2$ and $l=(p-\lambda_2-1)/2$ is a nonnegative integer. For $0\leq i\leq l$, define partitions:
     \[\begin{split}
    	&\lambda^i=(p-i,\lambda_2+i+1,\lambda_3,\dots,\lambda_n)\vdash(p+k+1),\\
    	&(\lambda')^i=(p-i,\lambda_2+i,\lambda_3,\dots,\lambda_n)\vdash(p+k).\end{split}\]
    By Theoorem \ref{thm:nabla}, we have 
    \[\nabla(s_{\lambda^i})=(\sum_{\substack{\mu\vdash (p+k),\,\mu\subset\lambda^i\\
    			\mu_1=p-i,\,\mu_2=\lambda_2+i+1}}c_\mu s_\mu)+
\begin{cases}
	(n+a_i)s_{(\lambda')^{i+1}}+(n+b_i)s_{(\lambda')^i}&\text{if } 0\leq i<l,\\
	(n+b_i)s_{(\lambda')^{i}}&\text{if } i=l,
\end{cases}\] 
    in which $c_\mu\in\Zz$, $a_i=p-i-1$ and $b_i=\lambda_2+i-1$. Since $p\mid n$ and $0\leq i\leq l<p-\lambda_2<p-1$,  it follows that $(n+a_i)$ and $(n+b_i)$ are invertible in $\Zz_{(p)}$ for all $i$. Hence by easy linear algebra, we can write
    \begin{equation}\label{eq:mu}
    	s_\lambda=s_{(\lambda')^0}=\sum_{i=0}^l\big[k_i\nabla(s_{\lambda^i})+\sum_{\substack{\mu\vdash (p+k),\,\mu\subset\lambda^i\\
    		\mu_1=p-i,\,\mu_2=\lambda_2+i+1}}k_\mu s_\mu\big],\ k_i,\,k_\mu\in\Zz_{(p)}.
    	\end{equation}
Note that $\mu>\lambda$ for each $\mu$ with $\mu_1=p$ in \eqref{eq:mu}. 
For each $i\geq 1$, let 
\[U_i=\{\rho\vdash (p+k): \rho_1<p,\,\rho_1+\rho_2=p+\lambda_2+i,\,
\rho_j\leq \lambda_j\text{ for }j\geq 3\}.\]
Then \eqref{eq:mu} can be rewritten as 
 \begin{equation}\label{eq:rho}
	s_\lambda=\alpha'+\sum_{\mu>\lambda}k_\mu s_\mu+\sum_{\rho\in U_1}l_\rho s_\rho,\ \ \alpha'\in\im\nabla_{(p)},\ k_\mu,\,l_\rho\in\Zz_{(p)}.
\end{equation}

We will show that \eqref{eq:rho} holds when $U_1$ is replaced by $U_i$ for any $i>1$. Then the lemma follows since $U_i=\emptyset$ for $i>k-\lambda_2$. We prove this by induction on $i$. 
Let $U_i'=\{\rho\in U_i:\rho_1=p-1\}$, $i\geq 1$. For a partition $\rho\in U_i'$, define partitions:
\begin{gather*}\rho^j=(\rho_1-j+1,\rho_2+j,\rho_3,\dots,\rho_n)\vdash(p+k+1),\ \ 0\leq j\leq \frac{\rho_1-\rho_2}{2},\\
	(\rho')^j=(\rho_1-j+1,\rho_2+j-1,\rho_3,\dots,\rho_n)\vdash(p+k),\ \ 0\leq j\leq \frac{\rho_1-\rho_2}{2}+1.
	\end{gather*}
	It is easy to see that $(\rho')^0>\lambda$, $(\rho')^1=\rho$, and \[U_i=\bigcup_{\rho\in U_i'}\{(\rho')^j:1\leq j\leq \frac{\rho_1-\rho_2}{2}+1\}.\] 
 Since 
$\rho_1-j+1>\rho_2+j>\lambda_2\geq \lambda_3\geq\rho_3$ for $0\leq j\leq \frac{\rho_1-\rho_2}{2}$, 
Theorem \ref{thm:nabla} gives
\[\nabla(s_{\rho^j})=(n+a_j)s_{(\rho')^{j+1}}+(n+b_j)s_{(\rho')^j}+\sum_{\substack{\mu\vdash (p+k),\,\mu\subset\rho^j\\
		\mu_1=\rho_1-j+1,\,\mu_2=\rho_2+j}}c_\mu s_\mu,\]
where $a_j=\rho_1-j$, $b_j=\rho_2+j-2$, $c_\mu\in\Zz$.
Since $p\mid n$ and $p>\rho_1\geq \rho_2\geq\lambda_2+2>2$, $(n+a_j)$ and $(n+b_j)$ are invertible in $\Zz_{(p)}$ for all $0\leq j\leq \frac{\rho_1-\rho_2}{2}$. Hence a linear algebra argument shows that adding a proper $\Zz_{(p)}$-linear combination of $\nabla(s_{\rho^j})$, $0\leq j\leq \frac{\rho_1-\rho_2}{2}$, to the right side of the $U_i$ form of \eqref{eq:rho} makes the coefficients $l_{(\rho')^j}$ become zero for $1\leq j\leq \frac{\rho_1-\rho_2}{2}+1$, and creats a new remainder of the form
\[\sum_{j=0}^{\lfloor\frac{\rho_1-\rho_2}{2}\rfloor}\sum_{\substack{\mu\vdash (p+k),\,\mu\subset\rho^j,\\
		\mu_1=\rho_1-j+1,\,\mu_2=\rho_2+j}}l_\mu s_\mu,\ l_\mu\in\Zz_{(p)}.\]
Repeating this procedure for all $\rho\in U_i'$, we can rewrite the $U_i$ form of \eqref{eq:rho} as
\[s_\lambda=\alpha'+\sum_{\mu>\lambda}k_\mu s_\mu+\sum_{\rho\in U_{i+1}}l_\rho s_\rho,\  \  \alpha'\in\im\nabla_{(p)},\ k_\mu,\,l_\rho\in\Zz_{(p)},\]
finishing the induction step.
	This proves the existence of $\alpha$. 
	
	The proof of the existence of $\beta$ is very similar by replacing all  partitions we constructed above with their conjugates, and replacing all coefficients $(n+a_j)$ and $(n+b_j)$ in the proof of the existence of $\alpha$ with $(n-a_j)$ and $(n-b_j)$ respectively. We omit the details here.
\end{proof}
    
\begin{proof}[Proof of Proposition \ref{prop:coker}]
	 For each partition $\lambda=(\lambda_1,\dots,\lambda_n)\in\MM_n^{k-1}$ (see Theorem \ref{thm:schur} for the definition of $\MM_n^{k-1}$), let $\lambda'=(\lambda_1',\dots,\lambda_n')$ be the partition of $k$ such that $\lambda'_1=\lambda_1+1$ and $\lambda_i'=\lambda_i$ for $i\neq 1$. Then by Theorem \ref{thm:nabla} and the rule of the lexicographic ordering of partitions, we have
	\begin{equation}\label{eq:lambda'}
		\nabla(s_{\lambda'})=(n+\lambda_1)s_\lambda+\sum_{\mu>\lambda}a_\mu s_\mu,\ a_\mu\in\Zz.
	\end{equation}
	Let $M_n^k$ be the submodule of $\Lambda_n^{k}$ spanned by $\{s_{\lambda'}:\lambda\in\MM_n^{k-1}\}$.  Then the coefficient matrix associated to $\nabla:M_n^k\to \Lambda^{k-1}_n$ has the form
	\[   
	A=\bordermatrix{
		~&\lambda'&>&\cdots&\mu'\cr
		\lambda&a_1&*&\cdots&*\cr
		>&0&a_2&\cdots&*\cr
		\vdots&\vdots&\vdots&\ddots&\vdots\cr
		\mu&0&0&\cdots&a_m
	},\ m=\#\MM_n^{k-1},\]
	where $a_i$'s are of the form $n+\lambda_1$ for $\lambda\in\MM_n^{k-1}$. Therefore, $\coker\nabla^{k}$ is a finite group. 

For each $k$ in the range $p+2\leq k\leq p+6$, define
	\[\Ss_k=\{\lambda\vdash(k-1):\lambda_1=p\}\ \text{ and }\ \Ss'_k=\{\lambda\in\Ss_k:\lambda_2\text{ is even}\}.\] 
 First we consider the range $p+2\leq k\leq \min\{2p,p+6\}$, which covers all situations except for $p=5$ and $k=p+6$.
In this range any partition $\lambda\vdash(k-1)$  satisfies $\lambda_1\leq k-1<2p$. Hence \eqref{eq:lambda'} implies that a diagonal entry $a_i$ in $A$ is not invertible in $\Zz_{(p)}$ if and only if the corresponding partition $\lambda\vdash (k-1)\in\Ss_k$, so that $a_i=n+p$. For any $\lambda\vdash(k-1)\in\Ss_k'$, Lemma \ref{lem:linear} shows that there is $f\in\Lambda_n^k$ such that $\nabla_{(p)}(f)=s_{\lambda}+\sum_{\mu>\lambda}a_\mu s_\mu$. Hence, by rechoosing the basis of $M_n^k$, the diagonal entries $a_i=n+p$ corresponding to $\lambda\in\Ss_k'$ can be replaced by $1$ for $\nabla_{(p)}^k$. This shows that the order $l$ of $\coker\nabla^k_{(p)}$ satisfies $l\mid (n+p)^{r_k}$, where $r_k=\#\Ss_k-\#\Ss_k'$. It is easy to check that for $p\geq 3$,
\[\begin{split}&\#\Ss_{p+2}=\PP(1)=1,\ \ \#\Ss_{p+2}'=0,\ \ r_{p+2}=1,\\
	&\#\Ss_{p+3}=\PP(2)=2,\ \ \#\Ss_{p+3}'=1,\ \ r_{p+3}=1,
\end{split}\]
 and for $p\geq 5$,
\[\begin{split}
	&\#\Ss_{p+4}=\PP(3)=3,\ \ \#\Ss_{p+4}'=1,\ \ r_{p+4}=2,\\
	&\#\Ss_{p+5}=\PP(4)=5,\ \ \#\Ss_{p+5}'=3,\ \ r_{p+5}=2,\\
	&\#\Ss_{p+6}=\PP(5)=7,\ \ \#\Ss_{p+6}'=3,\ \ r_{p+6}=4.
\end{split}\]
So if $\frac{n}{p}\not\equiv p-1$ mod $p$, $|\coker\nabla^{k}_{(p)}|\leq p^{r_k}$. 
On the other hand, if $\frac{n}{p}\equiv p-1$ mod $p$, we use the conjugate $\bar{\lambda'}$ of $\lambda'$ for each $\lambda\in\MM_n^{k-1}$. The assumptions on $p$, $n$ and $k$ guarantee $\#\bar{\lambda'}\leq n$, so that $s_{\bar{\lambda'}}\subset \Lambda_n$. Theorem \ref{thm:nabla} gives
\[\nabla(s_{\bar{\lambda'}})=(n-\lambda_1)s_{\bar\lambda}+\sum_{\mu\succ\bar\lambda}a_\mu s_\mu,\ a_\mu\in\Zz.\] 
Let $\bar M_n^k$ be the submodule of $\Lambda_n^{k}$ spanned by the Schur polynomials $s_{\bar{\lambda'}}$. Then the coefficient matrix associated to $\nabla:\bar M_n^k\to \Lambda^{k-1}_n$ takes the form
\[   
\bar A=\bordermatrix{
	~&\bar\lambda'&\succ&\cdots&\bar\mu'\cr
\bar\lambda&\bar a_1&*&\cdots&*\cr
	\succ&0&\bar a_2&\cdots&*\cr
	\vdots&\vdots&\vdots&\ddots&\vdots\cr
	\bar\mu&0&0&\cdots&\bar a_m
},\]
where $\bar a_i$'s are of the form $n-\lambda_1$ for $\lambda\in\MM_n^{k-1}$. Applying Lemma \ref{lem:linear} to $\lambda\in\Ss_k'$ and using a similar argument as before, we can get $l\mid (n-p)^{r_k}$. Since $\frac{n}{p}-1\equiv p-2$ mod $p$ and $p\geq 3$, $p^2\nmid (n-p)$. Therefore we still have $|\coker\nabla^{k}_{(p)}|\leq p^{r_k}$,  which proves the proposition except for $p=5$ and $k=p+6$.

For $p=5$ and $k=p+6$, it is easy to see that a diagonal entry $a_i$ in $A$ is not invertible in $\Zz_{(p)}$ if and only if the corresponding partition $\lambda\vdash (p+5)=(2p)$ satisfies that
$\lambda\in\Ss_{p+6}$
or $\lambda=(2p)$, since $p$ also divides the coefficient $n+2p$ in \eqref{eq:lambda'} for $\lambda=(2p)$. 
If $\frac{n}{p}\not\equiv p-1,\,p-2$ mod $p$, then $p^2\nmid (n+p),\,(n+2p)$, and by similar reasoning we get 
$|\coker\nabla^{p+6}_{(p)}|\leq	p^{r_{p+6}+1}$.
If $\frac{n}{p}\equiv p-1$ or $p-2$ mod $p$, using a conjugate partition argument as before, we still get $|\coker\nabla^{p+6}_{(p)}|\leq p^{r_{p+6}+1}$, completing the proof.
    \end{proof}

\section{More about $\nabla$ on $\Lambda_{3k}$ and $\Lambda_{2k}$} 
In this section, we give several lemmas, which will be used in our computation of the spectral sequence $^UE$. They are supplements to Proposition \ref{prop:coker}.

\begin{lem}\label{lem:p=3(7)}
	For $\nabla$ acting on $\Lambda_{3k}$, $k\geq2$, we have 
	\[|\coker\nabla^7_{(3)}|\leq\begin{cases}
		3^3&\text{if }3\mid k,\\
		3^4&\text{if }3\nmid k.
		\end{cases}\] 
	Moreover, $\coker\nabla^7_{\Zz/3}\subset(\Zz/3)^3$ holds for all $k\geq 2$. 
\end{lem}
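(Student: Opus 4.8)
The plan is to mimic the proof of Proposition \ref{prop:coker} in the specific case $p=3$, $k=7$, where we work with $\Lambda_n$ for $n=3k$, so that the assumption $p\mid n$ holds automatically and $n/p = k$. First I would set up the upper-triangular coefficient matrix $A$ of $\nabla\colon M_n^7\to \Lambda_n^6$ with respect to the Schur basis, ordered lexicographically, exactly as in \eqref{eq:lambda'}: the diagonal entry attached to $\lambda\in\MM_n^6$ is $n+\lambda_1$. Since $k=7$ now exceeds the bound $\min\{2p,p+6\}=8$? No --- here $p=3$ so $p+6=9 > 2p = 6$, and $k-1 = 6$ can be as large as $6 \geq 2p$; thus partitions with $\lambda_1\in\{3,6\}$ (i.e.\ $\lambda_1\equiv 0 \bmod 3$) give non-invertible diagonal entries $n+3$ or $n+6$. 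So I would enumerate $\Ss_7=\{\lambda\vdash 6:\lambda_1=3\}$ and the extra partition $(6)$ (where the coefficient is $n+6$), count the ``bad'' diagonal entries, and then use Lemma \ref{lem:linear} (applicable since $p=3$ and $n>p$, $p\mid n$, with $2\leq k\leq p$ replaced by the relevant small values of $\lambda_2$) to kill those bad entries coming from $\lambda\in\Ss_7$ with $\lambda_2$ even, replacing them by units. The surviving bad entries are $n+3$ for the odd-$\lambda_2$ partitions in $\Ss_7$ plus $n+6$ for $\lambda=(6)$, giving a bound $|\coker\nabla^7_{(3)}| \mid (n+3)^{a}(n+6)^{b}$ for explicit small $a,b$; since $3^2\nmid n+3$ unless $k\equiv -1\equiv 2\bmod 3$ and $3^2\nmid n+6$ unless $k\equiv 0\bmod 3$, a case split on $k \bmod 3$ (using the conjugate-partition trick of Proposition \ref{prop:coker} when $3^2$ does divide one of these) yields the stated bounds $3^3$ when $3\mid k$ and $3^4$ when $3\nmid k$.

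For the refinement $\coker\nabla^7_{\Zz/3}\subset(\Zz/3)^3$ valid for all $k\geq 2$, I would pass to coefficients in $\Zz/3$, where every diagonal entry $n+\lambda_1$ with $3\nmid\lambda_1$ becomes a unit, so only $\lambda_1\equiv 0\bmod 3$ contributes. Over $\Zz/3$ the entries $n+3$ and $n+6$ are both simply $n\equiv 0$, so all partitions in $\Ss_7\cup\{(6)\}$ a priori contribute; but I then apply Lemma \ref{lem:linear} over $\Zz_{(3)}$ (hence over $\Zz/3$ after reduction) to remove the contributions of those $\lambda\in\Ss_7$ whose second part $\lambda_2$ is even, and I would also need to handle $\lambda=(6)$ and any remaining odd-$\lambda_2$ partitions. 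The count $\#\Ss_7' $ of even-$\lambda_2$ partitions of $6$ with first part $3$ --- namely $(3,2,1)$ and $(3,3)$? note $(3,3)$ has $\lambda_2=3$ odd, so $\Ss_7' = \{(3,2,1)\}$ together with... --- needs to be done carefully: $\Ss_7 = \{(3,3),(3,2,1),(3,1,1,1)\}$, of which only $(3,2,1)$ has $\lambda_2$ even, while $(3,3)$ and $(3,1,1,1)$ have $\lambda_2$ odd; adding $(6)$ that is $3+1=4$ bad columns, minus $1$ from Lemma \ref{lem:linear}, leaving $3$, which matches $(\Zz/3)^3$. So the key step is to show that after the single reduction from Lemma \ref{lem:linear} there are no further relations forcing the cokernel smaller or larger, i.e.\ that the $3$ remaining bad rows are genuinely independent in $\coker\nabla^7_{\Zz/3}$ --- this is where one uses that over $\Zz/3$ the rank of $A$ is exactly $m - 3$.

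The main obstacle I anticipate is the bookkeeping in applying Lemma \ref{lem:linear} in this boundary regime: Lemma \ref{lem:linear} as stated requires $\lambda_1=p$ and $\lambda_2$ even with $2\le k \le p$, and here $k-1=6 > p = 3$, so one must check that its proof still goes through when the partition $\lambda$ of $p+k'$ being cleared has $k' = 6 - 3 = 3 = p$ (the borderline case) --- in particular that the auxiliary partitions $\lambda^i$, $(\lambda')^i$ and the sets $U_i$ stay inside $\MM_n^\bullet$ and that all the coefficients $n\pm a_j$, $n\pm b_j$ remain $3$-adic units, which uses $n>p$ and $p\mid n$ to rule out $a_j,b_j\equiv 0$. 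I would also need to double-check the parenthetical claim that over $\Zz/3$ the entry for $\lambda=(6)$, whose lexicographically-smaller partners may interfere, can be isolated; this likely requires an explicit local computation with the handful of degree-$6$ and degree-$7$ Schur polynomials in the relevant variables, which is routine but must be done. Once those checks are in place, the conjugate-partition argument of Proposition \ref{prop:coker} transfers verbatim to handle the $3^2\mid n+3$ and $3^2\mid n+6$ cases, and the $\Zz/3$ statement follows by reduction mod $3$ from the structure of $A$.
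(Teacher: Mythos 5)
Your setup (the triangular matrix $A$ with diagonal entries $n+\lambda_1$, the four bad partitions $(3,3)$, $(3,2,1)$, $(3,1,1,1)$, $(6)$, and $\Ss_7'=\{(3,2,1)\}$) is correct, and two of the three pieces work. For $3\mid k$, killing the $(3,2,1)$ entry via Lemma \ref{lem:linear} (the boundary case $6=p+k$ with $k=3=p$ is explicitly allowed by that lemma, so your worry there is unfounded) leaves three entries each with $v_3=1$, giving $3^3$; this is a legitimate alternative to the paper's route, which keeps all four entries and instead kills one factor of $3$ with the snake-lemma connecting map $\delta(\bar s_{(3,3,1)})=(3k-2)s_{(3,3)}+(3k+1)s_{(3,2,1)}$ from \eqref{eq:snake}. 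Your mod-$3$ argument for the second statement is also fine, and is in fact more direct than the paper's (which deduces it from the first statement via $k$-independence of the mod-$3$ matrix); note you only need the upper bound $\mathrm{rank}\geq m-3$, not exactness.

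The genuine gap is in the case $3\nmid k$. Your parity bookkeeping is backwards: $9\mid n+6=3(k+2)$ exactly when $k\equiv 1\pmod 3$ (not $k\equiv 0$), and $9\mid n+3=3(k+1)$ exactly when $k\equiv 2\pmod 3$. More seriously, the determinant bound plus the conjugate-partition trick cannot give $3^4$ uniformly, because the row $(6)$ is reached in your triangular scheme only through the single column $s_{(7)}$ (resp.\ $s_{(1^7)}$), whose diagonal entry $n+6$ (resp.\ $n-6$) can have arbitrarily large $3$-adic valuation, and the $(3,*)$ rows double up the other bad factor. Concretely: for $k=2$ ($n=6$) the direct basis gives $v_3\big((n+3)^2(n+6)\big)=v_3(9^2\cdot 12)=5$ even after Lemma \ref{lem:linear}, and $s_{(1^7)}=0$ in $\Lambda_6$ so the conjugate basis is not even available; for $k\equiv 7\pmod 9$ (e.g.\ $n=21$, $n+6=27$) the direct bound is at least $3^5$ while the conjugate entries $n-3$ carry $v_3\geq 2$ twice. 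Closing this needs extra input beyond diagonal entries: the paper reduces $k\equiv 2$ to $k\equiv 1\pmod 3$ via the multiplicative map $\Delta:\Lambda_{6k}\to\Lambda_{3k}$ (which commutes with $\nabla$ and surjects on $\coker\nabla_{(3)}$) and uses the connecting homomorphism $\delta$; and to tame the $(6)$-row one should observe (via Theorem \ref{thm:nabla}) that $\nabla(s_{(6,1)})=(n-1)s_{(6)}+(n+5)s_{(5,1)}$ with $n-1$ a $3$-adic unit, so $s_{(6)}$ is congruent modulo $\im\nabla_{(3)}$ to a multiple of $s_{(5,1)}$, whose own diagonal entry is a unit — the style of explicit relation used in the proof of Lemma \ref{lem:p=3(8)}. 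Without an ingredient of this kind your bound does not close for $3\nmid k$, already at $k=2$.
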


\begin{lem}\label{lem:p=3(8)}
	For $\nabla$ acting on $\Lambda_{3k}$, $k\geq2$, we have \[|\coker\nabla^8_{(3)}|\leq 3^3,\ \ |\coker\nabla^9_{(3)}|\leq 3^5.\]
\end{lem}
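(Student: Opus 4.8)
The proof of Lemma \ref{lem:p=3(8)} proceeds along the same lines as the proof of Proposition \ref{prop:coker}, specialized to $p=3$ and adapted to the two degrees $k=8$ and $k=9$; the point is that in these degrees partitions with $\lambda_1 \geq 2p = 6$ do occur, so we must track the diagonal entries $n+\lambda_1 = 3k+\lambda_1$ with $\lambda_1 \in \{3,6\}$ (and for $k=9$ possibly $\lambda_1 = 9$, but $3k+9$ with $3\mid 3k$ is always divisible by $3$, requiring care), not just $\lambda_1 = p$. First I would set up the upper-triangular coefficient matrix $A$ for $\nabla : M_{3k}^k \to \Lambda_{3k}^{k-1}$ exactly as in \eqref{eq:lambda'}, with rows and columns indexed by $\MM_{3k}^{k-1}$ in lexicographic order, so that $\coker\nabla^k_{(3)}$ is governed by the product of those diagonal entries $3k+\lambda_1$ that fail to be invertible in $\Zz_{(3)}$, i.e. those with $3 \mid \lambda_1$.

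Next I would enumerate, for $k=8$, the partitions $\lambda \vdash 7$ with $\lambda_1 \in \{3,6\}$: those with $\lambda_1 = 3$ are $(3,3,1),(3,2,2),(3,2,1,1),(3,1,1,1,1)$ and with $\lambda_1=6$ just $(6,1)$; similarly for $k=9$ I would list $\lambda \vdash 8$ with $\lambda_1 \in \{3,6\}$ (there are more), giving a first naive count. The key reduction, just as in Proposition \ref{prop:coker}, is to invoke Lemma \ref{lem:linear}: whenever $\lambda$ has $\lambda_1 = p = 3$ and $\lambda_2$ even, there is an element of $\im\nabla_{(3)}^k$ of the form $s_\lambda + \sum_{\mu > \lambda} a_\mu s_\mu$, so after a change of basis of $M_{3k}^k$ the corresponding diagonal entry may be replaced by $1$. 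This kills the contributions of $\lambda = (3,2,2)$ and $(3,2,1,1)$ in degree $k=8$, and the analogous even-$\lambda_2$ partitions in degree $k=9$. For the remaining non-invertible diagonal entries — those with $\lambda_1 = 6$, or $\lambda_1 = 3$ with $\lambda_2$ odd — I would bound the $3$-adic valuation of $3k+\lambda_1$ crudely by $1$ unless $9 \mid 3k+\lambda_1$, i.e. unless $3 \mid k + \lambda_1/3$; the worst case $9 \mid (3k+\lambda_1)$ is handled, as in the last paragraph of the proof of Proposition \ref{prop:coker}, by passing to conjugate partitions $\bar{\lambda'}$ and the matrix $\bar A$ with diagonal entries $3k - \lambda_1$, using that $3k+\lambda_1$ and $3k-\lambda_1$ cannot both be divisible by $9$ when $3 \nmid \lambda_1/3 \cdot$ something — more precisely, $v_3(3k-\lambda_1) + v_3(3k+\lambda_1)$ is controlled because their difference is $2\lambda_1$. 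Checking that $\#\{\text{relevant }\lambda\} - \#\{\text{those with }\lambda_2\text{ even and }\lambda_1=3\}$ equals $3$ for $k=8$ and $5$ for $k=9$ then yields the claimed bounds $|\coker\nabla^8_{(3)}| \leq 3^3$ and $|\coker\nabla^9_{(3)}| \leq 3^5$.

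The main obstacle, I expect, will be the bookkeeping around the $\lambda_1 = 6$ (and, for $k=9$, the $\lambda_1 = 9$) partitions, since Lemma \ref{lem:linear} as stated only directly applies when $\lambda_1 = p$; I would need either to check by hand that the relevant $\lambda_1 = 6$ partitions ($(6,1)$ for $k=8$, and $(6,2),(6,1,1)$ for $k=9$) contribute at most one factor of $3$ each — which holds as long as $9 \nmid 3k+6$, i.e. $3 \nmid k+2$, and in the exceptional residue class pass to conjugates where the entry becomes $3k-6$ with $9 \nmid 3k-6$ unless $3\mid k-2$, impossible simultaneously — or to prove a mild extension of Lemma \ref{lem:linear} covering $\lambda_1 = 2p$. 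I would also need to double-check the edge case $k=2$ separately if the partition enumeration degenerates (here $3k = 6 \geq 6$, so all listed partitions genuinely have $\leq 3k$ rows and no degeneration occurs), and confirm that the conjugates $\bar{\lambda'}$ indeed have at most $3k$ rows in every case, which follows from $\lambda_1' = \lambda_1 + 1 \leq k \leq 9 \leq 3k$ for $k \geq 3$ and is checked directly for $k=2$.
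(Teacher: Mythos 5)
Your skeleton is the right one (the upper--triangular matrix $A$, the enumeration of the partitions with $3\mid\lambda_1$, and the counts $5-2=3$ in degree $7$ and $7-2=5$ in degree $8$ all match the paper), but there are two problems. First, Lemma \ref{lem:linear} is stated only for partitions of $p+k$ with $2\leq k\leq p$, i.e.\ for $p=3$ only in degrees $5$ and $6$; you invoke it for partitions of $7$ and $8$, where it does not apply as stated. The substance is recoverable --- the paper gets the same effect from the connecting map $\delta$ in \eqref{eq:snake}, computing e.g.\ $\delta(\bar s_{(3,3,2)})=(3k-1)s_{(3,3,1)}+(3k+1)s_{(3,2,2)}$ and $\delta(\bar s_{(3,3,1,1)})=(3k-3)s_{(3,3,1)}+(3k+1)s_{(3,2,1,1)}$, which is exactly the one-step version of the construction in Lemma \ref{lem:linear} --- but you would need to verify this rather than cite the lemma.

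The genuine gap is your treatment of the $\lambda_1=6$ partitions when $3\nmid k$. If $k\equiv 1\bmod 3$ then $9\mid 3k+6$, and your fix is to pass to the conjugate matrix $\bar A$. But $\bar A$ replaces \emph{every} diagonal entry $3k+\lambda_1$ by $3k-\lambda_1$, and for the remaining $\lambda_1=3$ partitions the entry $3k-3=3(k-1)$ is then divisible by $9$ (since $3\mid k-1$); so $\bar A$ gives a bound of $3^5$ in degree $8$, not $3^3$, and $A$ gives only $3^4$. You cannot use $3k+\lambda_1$ for some rows and $3k-\lambda_1$ for others inside a single triangular-matrix argument, because the two constructions are triangular with respect to different (essentially opposite) orderings. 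Your observation that $v_3(3k+\lambda_1)+v_3(3k-\lambda_1)$ is bounded does not rescue this. The paper handles this case by proving directly that $3s_{(6,1)}$, $3s_{(6,2)}$, $3s_{(6,1,1)}\in\im\nabla_{(3)}$ via explicit preimages (e.g.\ $3k\cdot s_{(6,1)}=\nabla_{(3)}(s_{(6,2)}-\frac{3k+5}{3k+1}s_{(5,3)}+\frac{(3k+4)(3k+5)}{(3k+1)(3k+2)}s_{(4,4)})$, and an $8\times8$ matrix computation in degree $9$), and reduces the residue class $k\equiv 2\bmod 3$ to $k\equiv 1\bmod 3$ using the doubling map $\Delta:\Lambda_{6k}\to\Lambda_{3k}$ that commutes with $\nabla$. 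The ``mild extension of Lemma \ref{lem:linear} to $\lambda_1=2p$'' you defer to is precisely where the work is: the leading coefficient $n+2p$ is the one divisible by $p^2$, so the lemma's mechanism of dividing by a unit breaks down and an explicit computation is unavoidable.
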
  
In Section \ref{sec:proof} we will see that the cokernels in the lemmas are acturally 
\[\coker\nabla^7_{(3)}=\begin{cases}
(\Zz/3)^3&\text{if }3\mid k,\\
(\Zz/3)^2\oplus\Zz/9&\text{if }3\nmid k,
\end{cases}\]
and $\coker\nabla^8_{(3)}=(\Zz/3)^3$, $\coker\nabla^9_{(3)}=(\Zz/3)^5$.

Before proving these lemmas, let us introduce a few notations. Recall that in the proof of Proposition \ref{prop:coker}, we constructed a module $M_n^k\subset\Lambda_n^k$, which has a basis $\{s_{\lambda'}:\lambda\in\MM_n^{k-1}\}$, where $\lambda_1'=\lambda_1+1$ and $\lambda_i'=\lambda_i$ for $i\neq 1$. Let $N_n^{k}=\Lambda_n^{k}/M_n^{k}$, $\bar s_\lambda\in N_n^{k}$ the coset $s_\lambda M^k_n$ for  $\lambda\in\MM_n^{k}$.
Then there is a commutative diagram with exact rows,
\[
\xymatrix{
	0\ar[r]&M_n^{k}\ar[r]\ar[d]^{\nabla^{k}_M}&\Lambda_n^{k}\ar[r]\ar[d]^{\nabla^{k}}&N_n^{k}\ar[r]\ar[d]&0\\
	0\ar[r]&\Lambda_n^{k-1}\ar[r]&\Lambda_n^{k-1}\ar[r]& 0\ar[r]&0}
\]
where the map $\nabla_M^{k}$ is the restriction of $\nabla^{k}$ to $M_n^{k}$.
By snake lemma, there is an exact sequence 
\begin{equation}\label{eq:snake}
0\to\ker\nabla^{k}_M\to\ker\nabla^{k}\to N_n^{k}\xrightarrow{\delta}\coker\nabla^{k}_M\to\coker\nabla^{k}\to0,
\end{equation}
where the connecting homomorphism $\delta$ is induced by $\nabla$.

\begin{proof}[Proof of Lemma \ref{lem:p=3(7)}]
We have already seen in the proof of Proposition \ref{prop:coker} that the coefficient matrix associated to $\nabla^7_M:M_{3k}^7\to \Lambda^{6}_{3k}$ has the form
\[ A=  
\bordermatrix{
	~&\lambda'&>&\cdots&\mu'\cr
	\lambda&a_1&*&\cdots&*\cr
	>&0&a_2&\cdots&*\cr
	\vdots&\vdots&\vdots&\ddots&\vdots\cr
	\mu&0&0&\cdots&a_m
},\ m=\#\MM_{3k}^6,\]
where the diagonal entries $a_i$'s are of the form $3k+\lambda_1$ for $\lambda\in\MM_{3k}^6$. Hence $3\mid a_i$ if and only if $\lambda_1=3$ or $6$. There are only four such partitions: $(3,3)$, $(3,2,1)$, $(3,1,1,1)$, $(6)$. If in addition $3\mid k$, then $3^2\nmid a_i$ for all $i$, and we have $|(\coker\nabla^{7}_M)_{(3)}|=3^4$.  Furthermore, in the exact sequence \eqref{eq:snake}, we have
\begin{equation}\label{eq:coefficient}
\delta(\bar s_{(3,3,1)})=(3k-2)s_{(3,3)}+(3k+1)s_{(3,2,1)}.
\end{equation}
This means that $\delta(\bar s_{(3,3,1)})\neq 0$ in $(\coker\nabla^{7}_M)_{(3)}$, since the diagonal entries in $A$ corresponding to $(3,3)$ and $(3,2,1)$ are divided by $3$ but the coefficients of $s_{(3,3)}$ and $s_{(3,2,1)}$ in \eqref{eq:coefficient} are not. Therefore, if $3\mid k$, the order of $\coker\nabla^{7}_{(3)}$ is at most $3^3$ by \eqref{eq:snake}.
 
For $3\nmid k$, we first assume that $k\equiv 1$ mod $3$. In this case, a diagonal entry in $A$ is divided by $3^2$ only if the corresponding partition is $\lambda=(6)$. So a similar argument shows that $|(\coker\nabla^{7}_M)_{(3)}|=3^5$ and $|\coker\nabla^{7}_{(3)}|\leq 3^4$. For the remaining case that $k\equiv 2$ mod $3$, we consider the graded homomorphism 
\[\begin{split}
	\Delta:\Lambda_{6k}=\Zz[\sigma_1,\dots,\sigma_{6k}]&\to\Lambda_{3k}=\Zz[\sigma_1',\dots,\sigma_{3k}']\\
	(1+\sigma_1+\cdots+\sigma_{6k})&\mapsto (1+\sigma_1'+\cdots+\sigma_{3k}')^2.
	\end{split}\]
Here $\sigma'_i$ means the $i$th elementary symmetric polynomial in $3k$ variables, to distinguish it from the one in $6k$ variables.
It can be shown that $\Delta$ commutes with $\nabla$, and  $\Delta$, restricted to $\Zz[\sigma_1,\dots,\sigma_{3k}]$ and localized at $3$, is an isomorphism (see the discussion following Remark \ref{rem:equaltiy}). Hence $\Delta$ induces a surjection on $\coker\nabla_{(3)}$, and we reduce to the previous case since $2k\equiv 1$ mod $3$.

To prove the second statement, we consider the coefficient matrix (with entries in $\Zz/3$) for $\nabla^7_{\Zz/3}:\Lambda_{3k}^7\otimes{\Zz/3}\to \Lambda^{6}_{3k}\otimes{\Zz/3}$. This matrix is independent of $k$ when $k\geq 3$, so the cokernel of $\nabla^7_{\Zz/3}$ are all the same for $k\geq 3$. In the case that $k=2$, the only difference is that $s_{\bar{(7)}}\not\in\Lambda_6^7$. However, since $\nabla_{\Zz/3}(s_{\bar{(7)}})=0$ for $k\geq 3$, it has no effect on the group $\coker\nabla^7_{\Zz_3}$. Thus the second statement follows from the first statement.
\end{proof}

\begin{proof}[Proof of Lemma \ref{lem:p=3(8)}]
The strategy is the same as the proof of Lemma \ref{lem:p=3(7)}. First, we deal with the case that $3\mid k$. We know that the coefficient matrices for  $\nabla^j_M:M_{3k}^j\to \Lambda^{j-1}_{3k}$ are upper triangular for all $j>0$. If $j\leq 9$, for any diagonal entry $a_i=3k+\lambda_1$ corresponding to a partition $\lambda\in\MM_{3k}^{j-1}$ in the coefficient matrix, we have $3^2\nmid a_i$ since $3\mid k$. 

For $j=8$, the diagonal entry is divided by $3$ if and only if it corresponds to one of the five partitions $(6,1)$, $(3,3,1)$, $(3,2,2)$, $(3,2,1,1)$, $(3,1,1,1,1)$ in $\MM^7_{3k}$. It follows that $|(\coker\nabla^{8}_M)_{(3)}|=3^5$.
In the exact sequence \eqref{eq:snake}, we have
\[
\begin{split}
	\delta(\bar s_{(3,3,2)})&=(3k-1)s_{(3,3,1)}+(3k+1)s_{(3,2,2)},\\
	\delta(\bar s_{(3,3,1,1)})&=(3k-3)s_{(3,3,1)}+(3k+1)s_{(3,2,1,1)}.
	\end{split}
	\]
Since $3\nmid(3k+1)$, $\delta(\bar s_{(3,3,2)})$ and $\delta(\bar s_{(3,3,1,1)})$ are linearly independent over $\Zz/3$ in $(\coker\nabla^{8}_M)\otimes\Zz/3$. It follows that  the order of the  subgroup of $(\coker\nabla^{8}_M)_{(3)}$ generated by $\delta(\bar s_{(3,3,2)})$ and $\delta(\bar s_{(3,3,1,1)})$ is at least $3^2$, which proves the lemma for $\coker\nabla^{8}_{(3)}$.

For $j=9$, the diagonal entry is divided by $3$ if and only if it corresponds to one of the seven partitions $(6,2)$, $(6,1,1)$, $(3,3,2)$, $(3,3,1,1)$, $(3,2,2,1)$, $(3,2,1,1,1)$ $(3,1,1,1,1,1)$ in $\MM^8_{3k}$. Hence $|(\coker\nabla^{9}_M)_{(3)}|=3^7$. By a similar computation, we see that $\delta(\bar s_{(3,3,2,1)})$ and $\delta(\bar s_{(3,3,1,1,1)})$ are linearly independent in $(\coker\nabla^{9}_M)\otimes \Zz/3$, and we get the desired conclusion for $\coker\nabla^{9}_{(3)}$.

Next we consider the case $3\nmid k$. For simplicity we assume that $k\equiv 1$ mod $3$, since the case that $k\equiv 2$ mod $3$ can be reduced to this case as we saw in the proof of Lemma \ref{lem:p=3(7)}. The only difference of this case from $3\mid k$ is that when $\lambda=(6,1)\in\MM_{3k}^7$ (resp. $\lambda=(6,2)$ or $(6,1,1) \in\MM_{3k}^8$), the corresponding diagonal entry $3k+\lambda_1=3k+6$ is divided by $3^2$. However, we can change the basis of $M^{8}_{3k}$ (resp. $M^9_{3k}$) so that the coefficient matrix for $\nabla^8_M$ (resp. $\nabla^9_M$) is still upper triangular and satisfies the condition that $3^2\nmid a_i$ for any diagonal entry $a_i$. Then we reduce to the previous case $3\mid k$.

To do this, it suffices to show that $3s_\lambda\in\im\nabla_{(3)}$ for $\lambda=(6,1)$, $(6,2)$, $(6,1,1)$ since the desired basis can obtained by just replacing $s_{\lambda'}$ with a polynomial $f\in\Lambda_{3k}$ such that $\nabla_{(3)}(f)=3s_\lambda$. An easy calculation shows that 
\[3k\cdot s_{(6,1)}=\nabla_{(3)}(s_{(6,2)}-\frac{3k+5}{3k+1}s_{(5,3)}+\frac{(3k+4)(3k+5)}{(3k+1)(3k+2)}s_{(4,4)}).\]
The proof of $3s_{(6,2)},\,3s_{(6,1,1)}\in\im\nabla_{(3)}$ can be done by performing elementary column transformations on the following $8\times 8$ matrix associated to $\nabla^9$
\[   
\bordermatrix{
	~&(6,3)&(6,2,1)&(5,4)&(5,3,1)&(5,2,2)&(4,4,1)&(4,3,2)&(3,3,3)\cr
(6,2)&3k+1&3k-2&0&0&0&0&0&0\cr
(6,1,1)&0&3k&0&0&0&0&0&0\cr
(5,3)&3k+5&0&3k+2&3k-2&0&0&0&0\cr
(5,2,1)&0&3k+5&0&3k+1&3k-1&0&0&0\cr
(4,4)&0&0&3k+4&0&0&3k-2&0&0\cr
(4,3,1)&0&0&0&3k+4&0&3k+2&3k-1&0\cr
(4,2,2)&0&0&0&0&3k+4&0&3k+1&0\cr
(3,3,2)&0&0&0&0&0&0&3k+3&3k
}\]
Here we omit the details of the calculation.
\end{proof}

\begin{lem}\label{lem:cokernel p=2}
For $\nabla$ acting on $\Lambda_{2k}$, $k\geq2$, we have $\coker\nabla^4_{\Zz/2}=\Zz/2$, $\coker\nabla^5_{\Zz/2}=(\Zz/2)^3$. Moreover, 
\[\coker\nabla^5_{(2)}=\begin{cases}
\Zz/8\oplus(\Zz/2)^2, &\text{if }k\equiv2\textrm{ mod } 4,\\
\Zz/4\oplus(\Zz/2)^2, &\text{otherwise}.\\
\end{cases}\]  
\end{lem}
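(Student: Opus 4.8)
The plan is to follow the pattern of the proofs of Lemmas~\ref{lem:p=3(7)} and~\ref{lem:p=3(8)}, now with $n=2k$ and the prime $p=2$: first the two mod~$2$ statements (the easy part), then the $2$-local statement via the snake sequence \eqref{eq:snake} and a Smith normal form computation.

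\emph{The mod $2$ statements.} By Theorem~\ref{thm:nabla}, every coefficient occurring in $\nabla(s_\lambda)$ for $\lambda\vdash 4$ or $\lambda\vdash 5$ has the form $2k\pm c$, hence depends only on the parity of $c$; so the matrix of $\nabla^4_{\Zz/2}$ (resp.\ $\nabla^5_{\Zz/2}$) in the Schur basis is independent of $k$. A direct application of Theorem~\ref{thm:nabla} gives $\nabla_{\Zz/2}(s_{(4)})=\nabla_{\Zz/2}(s_{(3,1)})=s_{(3)}$, $\nabla_{\Zz/2}(s_{(2,2)})=0$, and $\nabla_{\Zz/2}(s_{(2,1,1)})=\nabla_{\Zz/2}(s_{(1,1,1,1)})=s_{(1,1,1)}$, so $\im\nabla^4_{\Zz/2}=\langle s_{(3)},s_{(1,1,1)}\rangle$ has codimension $1$ in $\Lambda^3_{2k}\otimes\Zz/2$ and $\coker\nabla^4_{\Zz/2}=\Zz/2$. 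In degree $5$, only $\nabla_{\Zz/2}(s_{(4,1)})=s_{(4)}+s_{(3,1)}$ and $\nabla_{\Zz/2}(s_{(2,1,1,1)})=s_{(2,1,1)}+s_{(1,1,1,1)}$ are nonzero, so $\im\nabla^5_{\Zz/2}$ is $2$-dimensional inside the $5$-dimensional $\Lambda^4_{2k}\otimes\Zz/2$ and $\coker\nabla^5_{\Zz/2}=(\Zz/2)^3$. The value $k=2$ needs no separate treatment, since $s_{(1,1,1,1,1)}$, which lies in $\Lambda_{2k}$ only for $k\geq 3$, maps to $0$ mod $2$.

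\emph{The $2$-local statement.} I would localize at $2$ and use \eqref{eq:snake}, which gives $\coker\nabla^5_{(2)}\cong(\coker\nabla^5_M)_{(2)}/\im\delta$. Here $\MM^4_{2k}=\{(4),(3,1),(2,2),(2,1,1),(1,1,1,1)\}$, the module $M^5_{2k}$ has basis $s_{(5)},s_{(4,1)},s_{(3,2)},s_{(3,1,1)},s_{(2,1,1,1)}$, and $N^5_{2k}$ has basis $\bar s_{(2,2,1)},\bar s_{(1,1,1,1,1)}$ (only $\bar s_{(2,2,1)}$ when $k=2$). Theorem~\ref{thm:nabla} shows the matrix of $\nabla^5_M$ in these bases is upper triangular with diagonal $2k+4,\,2k+3,\,2k+2,\,2k+2,\,2k+1$ and explicitly computable off-diagonal entries, while the connecting map is given by $\delta(\bar s_{(2,2,1)})=\nabla(s_{(2,2,1)})=(2k-2)s_{(2,2)}+2k\,s_{(2,1,1)}$ and $\delta(\bar s_{(1,1,1,1,1)})=\nabla(s_{(1,1,1,1,1)})=(2k-4)s_{(1,1,1,1)}$ (the latter $0$ for $k=2$). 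Thus $\coker\nabla^5_{(2)}$ is the cokernel over $\Zz_{(2)}$ of an explicit $5\times 7$ matrix. Since $2k+3$ and $2k+1$ are units in $\Zz_{(2)}$, I would use them to eliminate the generators $s_{(3,1)}$ and $s_{(1,1,1,1)}$, reducing to the cokernel of a $3\times 5$ matrix $R$ over $\Zz_{(2)}$ on the generators $s_{(4)},s_{(2,2)},s_{(2,1,1)}$, all of whose entries are even. It remains to compute the Smith normal form of $R$ over the discrete valuation ring $\Zz_{(2)}$, i.e.\ the $2$-adic valuations of the determinantal divisors $d_1\mid d_2\mid d_3$: because every entry is even one gets $v_2(d_1)=1$ and $v_2(d_2)=2$ (the latter realized by a suitable $2\times 2$ minor), and a short case check on $k\bmod 4$ applied to the ten maximal minors shows that the residue of $k$ only affects $v_2(d_3)$, which equals $4$ if $k\not\equiv 2\pmod 4$ and $5$ if $k\equiv 2\pmod 4$. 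Hence the invariant factors have valuations $1,1,v_2(d_3)-2$, so $\coker\nabla^5_{(2)}\cong\Zz/2\oplus\Zz/2\oplus\Zz/2^{\,v_2(d_3)-2}$, which is $(\Zz/2)^2\oplus\Zz/4$ or $(\Zz/2)^2\oplus\Zz/8$ as claimed; the same matrix $R$ handles $k=2$ since the missing relation vanishes there.

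\emph{Main obstacle.} The delicate point is the exact value of $v_2(d_3)$. Entries of $R$ such as $2k+4=2(k+2)$, $2k+2=2(k+1)$, $2k-4=2(k-2)$ can have arbitrarily large $2$-adic valuation (e.g.\ when $k+1$ is highly $2$-divisible), yet $d_3$ stays bounded because of cancellations among the maximal minors, so one must check enough of them to be sure the minimum valuation is attained by a ``stable'' minor and not accidentally lowered. Note also that the doubling trick used in the proof of Lemma~\ref{lem:p=3(7)} is unavailable here: the analogue $\Lambda_{4k}\to\Lambda_{2k}$ sends $\sigma_1$ to $2\sigma_1'$, which is not a unit multiple of $\sigma_1'$ in $\Zz_{(2)}$, and the ``tripling'' variant $\Lambda_{6k}\to\Lambda_{2k}$ only interchanges the residues $k\equiv 1,3\pmod 4$, so it does not cut down the casework.
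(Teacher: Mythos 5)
Your proposal is correct, and the mod~$2$ part coincides with the paper's argument. For the $2$-local part you take a genuinely different route in the key computation. The paper works with the upper-triangular matrix of $\nabla^5_M$ directly: it names explicit generators of $(\coker\nabla^5_M)_{(2)}$, splits into cases on $k$ modulo $8$ (and modulo $2$), invokes a conjugate-partition trick to trade a diagonal entry $2k+4$ or $2k+2$ of excessive $2$-adic valuation for $2k-4$ or $2k-2$, and finally uses the mod~$2$ computation to rule out $(\Zz/4)^2$ in the odd-$k$ case. You instead pivot on the unit diagonal entries $2k+3$, $2k+1$ to reduce to a $3\times 5$ matrix $R$ with all entries in $2\Zz_{(2)}$ and read off the cokernel from the determinantal divisors $d_1,d_2,d_3$. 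This buys you a uniform treatment: the conjugate-partition trick and the mod~$8$ casework disappear, because the gcd over all maximal minors automatically absorbs the cancellations that the paper handles by changing basis. I checked your claimed valuations and they are right: $v_2(d_1)=1$ (realized by $2k+2$ for $k$ even, by $2k(2k-1)/(2k+3)$ for $k$ odd), $v_2(d_2)=2$ (realized by $(2k+2)^2$ for $k$ even, by $(2k+4)\cdot 2k$ for $k$ odd), and $v_2(d_3)=4$ for $k\not\equiv 2\bmod 4$ versus $5$ for $k\equiv 2\bmod 4$. The one step you should write out rather than wave at is the $k\equiv 2\bmod 4$ case of $d_3$: there the only minor of minimal valuation is the one on columns $(3,2),(3,1,1),(2,2,1)$, which is a sum of two terms each of valuation $4$; it evaluates to $2n(n-1)(n-2)(n+2)/(n+3)$ with $n=2k$, of valuation exactly $4+v_2(k)=5$, while every other minor has valuation at least $5$ (several at least $6$ or $7$). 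For $k$ odd the minimum $4$ comes from $3+\min(v_2(k+1),v_2(k-1))$, which is $4$ because exactly one of $k\pm1$ is $\equiv 2\bmod 4$ --- so the dependence is genuinely only on $k\bmod 4$ as you assert, but this is exactly the ``stability'' issue you flag and it needs the explicit check. Your handling of $k=2$ is fine: the column of $R$ indexed by $(1,1,1,1,1)$ is $(0,0,-(n-3)(n-4)/(n+1))^{T}$, which vanishes at $n=4$, so deleting it changes nothing.
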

\begin{proof}
As we saw earlier, the coefficient matrix associated to $\nabla^4_M:M_{2k}^4\to \Lambda^3_{2k}$ is an upper triangular matrix, and a diagonal entry is divided by $2$ if and only if it corresponds to the partition $(3,1)\vdash 4$, which gives the diagonal entry $2k+2$. Hence $(\coker\nabla^4_M)\otimes \Zz/2=\Zz/2$. On the other hand, it is easy to check that $\nabla_{\Zz/2}(s_{(2,2)})=0$ and $\nabla_{\Zz/2}(s_{(1,1,1,1)})=\nabla_{\Zz/2}(s_{(2,1,1)})\in(\im\nabla^4_M)\otimes\Zz/2$. Then \eqref{eq:snake} gives $\coker\nabla^4_{\Zz/2}=(\coker\nabla^4_M)\otimes \Zz/2=\Zz/2$. 

The proof for $\coker\nabla^5_{\Zz/2}$ is similar. By Theorem \ref{thm:nabla}, the coefficient matrix associated to $\nabla^5_M:M_{2k}^5\to\Lambda_{2k}^4$  is
\[   
\bordermatrix{
	~&(5)&(4,1)&(3,2)&(3,1,1)&(2,1,1,1)\cr
	(4)&2k+4&2k-1&0&0&0\cr
	(3,1)&0&2k+3&2k&2k-2&0\cr
	(2,2)&0&0&2k+2&0&0\cr
	(2,1,1)&0&0&0&2k+2&2k-3\cr
	(1,1,1,1)&0&0&0&0&2k+1\cr
}\]
Hence $(\coker\nabla^5_M)\otimes \Zz/2=(\Zz/2)^3$. If $\lambda=(2,2,1)$ (or $\lambda=(1,1,1,1,1)$ when $k>2$), it is easy to check that  $\nabla_{\Zz/2}(s_\lambda)=0$, and then \eqref{eq:snake} gives $\coker\nabla^5_{\Zz/2}=(\coker\nabla^5_M)\otimes \Zz/2=(\Zz/2)^3$. 

Now we prove the second statement. First we consider the case $4\mid k$ or $k\equiv 2$ mod $8$. In this case $2k+2\equiv 2$ mod $4$,
 and 
 \[2k+4\equiv \begin{cases}
 	4\text{ mod } 8, &\text{ if } 4\mid k,\\
 	8\text{ mod } 16, &\text{ if } k\equiv 2\text{ mod }8.
 \end{cases}\] 
Using the above matrix, it is staightforward to show that the formula in the Lemma holds if $\coker\nabla^5_{(2)}$ is replaced by $(\coker\nabla^5_M)_{(2)}$ (the three generators of the direct summands can be chosen as $s_{(4)}$, $ks_{(3,1)}+(k+1)s_{(2,2)}$, $(k-1)s_{(3,1)}+(k+1)s_{(2,1,1)}$), and that $\nabla_{(2)}(s_\lambda)\in\im(\nabla_M^5)_{(2)}$ if $\lambda=(2,2,1)$ (or $\lambda=(1,1,1,1,1)$ when $k>2$). Hence using \eqref{eq:snake} we get the desired formula for $\coker\nabla^5_{(2)}$. 

If $k\equiv 6$ mod $8$, then $16\mid 2k+4$, and $(\coker\nabla^5_M)_{(2)}$ is no longer the desired form. However, we can use a conjugate partition argument as in the proof of Proposition \ref{prop:coker} to change the diagonal entry $2k+4$ to $2k-4$ and $2k+2$ to $2k-2$. Then the same reasoning applies, since $2k-2\equiv 2$ mod $4$ and $2k-4\equiv 8$ mod 16 in this case. We finish the proof for $2\mid k$.

If $2\nmid k$, then the diagonal entries divided by $2$ satisfy $2k+4\equiv 2$ mod $4$ and $2k+2\equiv 0$ mod $4$. We may assume that $2k+2\equiv 4$ mod $8$ since otherwise we can use a conjugate partition argument to change the diagonal entry $2k+2$ to $2k-2$ so that $2k-2\equiv 4$ mod $8$. Hence we can show as above that \[(\coker\nabla^5_M)_{(2)}=(\Zz/4)^2\oplus\Zz/2.\] 
It is easy to verify that $\nabla_{(2)}(s_\lambda)\not\in\im(\nabla_M^5)_{(2)}$, $\nabla_{(2)}(2s_\lambda)\in\im(\nabla_M^5)_{(2)}$ for $\lambda=(2,2,1)$, and  $\nabla_{(2)}(s_\lambda)\in\im(\nabla_M^5)_{(2)}$ for $\lambda=(1,1,1,1,1)$. So $\coker\nabla^5_{(2)}$ equals to $\Zz_4\oplus(\Zz/2)^2$ or $\Zz/4\oplus\Zz/4$ by \eqref{eq:snake}. The statement for $\coker\nabla^5_{\Zz/2}$ shows that $\coker\nabla^5_{(2)}=\Zz_4\oplus(\Zz/2)^2$ is the case since $(\coker\nabla_{(2)})\otimes\Zz/2=\coker\nabla_{\Zz/2}$.
\end{proof}

\section{The operators $\Gamma_{i}$ on $\Lambda_n$}\label{sec:Gamma}
We define operators $\Gamma_i$ on the polynomial ring $P_n=\Zz[x_1,\dots,x_n]$ for all $i\in\mathbb{N}$ as follows. For any polynomial $f(x_1,\dots,x_n)$ of degree $d$, let $\Gamma(f)\in P_n[y]$ be the polynomial 
      \[\Gamma(f)=f(x_1+y,x_2+y,\dots,x_n+y)=f_0+f_1y+\cdots+f_dy^d,\quad f_i\in P_n.\]
Then we define $\Gamma_i(f)=f_i$ for $0\leq i\leq d$, and $\Gamma_i(f)=0$ for $i>d$.  Obviously $\Gamma_i$ presevers symmetric polynomials, so $\Gamma_i$ restricts to $\Lambda_n$. We use the notation $\Gamma_i^{k}$ to denote 
$\Gamma_i:P_n^k\to P_n^{k-i}$ or $\Gamma_i:\Lambda_n^k\to\Lambda_n^{k-i}$. Let $\w \Gamma_i$ (resp. $\w \Gamma_i^k$) be the composition of $\Gamma_i$ (resp. $\Gamma_i^k$) with the projection $P_n\to P_{n-1}$ or $\Lambda_n\to\Lambda_{n-1}$ induced by $x_i\mapsto x_i$ for $i<n$ and $x_n\mapsto 0$.

The operators $\w\Gamma_i$ are related to the differentials $^Td_r$ in the spectral sequence $^TE$. To see this, we define elements $v_i'=v_i-v_n$, $1\leq i\leq n$, in the cohomology ring $H^*(BT^n)=\Zz[v_1,\dots,v_n]$. For $f\in \Zz[v_1,\dots,v_n]$ and $i\geq 0$, write 
\[\w\Gamma_i(f)=\sum_{t_1,\dots,t_n}k_{i,{t_1,\dots,t_{n-1}}}v_1^{t_1}\cdots v_{n-1}^{t_{n-1}},\quad k_{i,t_1,\dots,t_{n-1}}\in\Zz.\]
Then from Proposition \ref{prop:T} and the definition of $\w\Gamma_i$ we see that for $f(v_1,\dots,v_n)=f(v_1'+v_n,\dots,v_n'+v_n)\in H^*(BT^n)$ and $\xi\in H^*(K(\Zz,3))$, 
\begin{equation}\label{eq:differential}
^Td_r(f(v_1,\dots,v_n)\xi)=\sum_{i}\sum_{t_1,\dots,t_{n-1}}{^Kd_r}(k_{i,{t_1,\dots,t_{n-1}}}\xi v_n^i)(v_1')^{t_1}\cdots (v_{n-1}')^{t_{n-1}}.
\end{equation}

For a prime $p$, let $_p\Gamma_i$, $_p\Gamma_i^k$, $_p\w\Gamma_i$, $_p\w\Gamma_i^k$ denote the composition of $\Gamma_i$, $\Gamma_i^k$, $\w\Gamma_i$, $\w\Gamma_i^k$ with the mod $p$ reduction map $\otimes\Zz/p$, respectively.  
 
\begin{prop}\label{prop:Im}
	Let $p$ be an odd prime, and $n>p$  a positive integer such that $p\mid n$. 
	Then for any $1\leq k\leq p+2$, the image of $_p\w\Gamma_{p-1}^{p+k}$ on $\Lambda_n$ contains a $\Zz/p$-vector space of dimension  $\PP(k+1)-\PP(k)+r(k-p)$, where $r(i)=0$ for $i<0$ and $r(i)=1$ for $0\leq i\leq 2$. 
\end{prop}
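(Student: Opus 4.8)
The plan is to exhibit, explicitly, a spanning set of Schur polynomials (modulo $p$) whose images under $_p\w\Gamma_{p-1}^{p+k}$ are linearly independent, and whose count is exactly $\PP(k+1)-\PP(k)+r(k-p)$. The starting point is the observation that $\w\Gamma_{p-1}$ lowers degree by $p-1$, so it maps $\Lambda_n^{p+k}$ to $\Lambda_{n-1}^{k+1}$; since $\dim_{\Zz/p}\Lambda_{n-1}^{k+1}=\PP(k+1)$ once $n-1$ is large enough (which it is, as $n>p$ and $k\leq p+2$), the claimed dimension is a drop of $\PP(k)$ from full rank, plus a correction term $r(k-p)\in\{0,1\}$ that only matters for $k=p,p+1,p+2$. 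So the real content is to show $_p\w\Gamma_{p-1}$ is \emph{almost} surjective onto $\Lambda_{n-1}^{k+1}$, with a kernel-of-the-transpose of controlled size.

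First I would compute the action of $\Gamma_{p-1}$ on Schur polynomials in a triangular form analogous to Theorem \ref{thm:nabla}: expanding $s_\lambda(x_1+y,\dots,x_n+y)$ and extracting the coefficient of $y^{p-1}$, one gets $\Gamma_{p-1}(s_\lambda)=\sum_{\mu}c_{\lambda\mu}s_\mu$ summed over $\mu\subset\lambda$ with $|\lambda/\mu|=p-1$, where the coefficient $c_{\lambda\mu}$ is a product of binomial-type factors coming from the content of the removed boxes together with the $n+j-i$ weights (this is the $\Gamma$-analogue of the HPSW/Nenashev formula; alternatively, $\Gamma_{p-1}$ is $\frac{1}{(p-1)!}\nabla^{p-1}$ after clearing denominators, so one can also derive the needed triangularity by iterating Theorem \ref{thm:nabla} and tracking $p$-divisibility). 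Then composing with $x_n\mapsto 0$ to get $\w\Gamma_{p-1}$ amounts to restricting the target to partitions with at most $n-1$ parts, which in our degree range is no restriction at all. The key point to extract is: for a partition $\mu\vdash(k+1)$ with at most $n-1$ parts, the ``minimal'' preimage (adding a horizontal strip of $p-1$ boxes to the first row, giving $\mu^+=(\mu_1+p-1,\mu_2,\dots)$) satisfies $\w\Gamma_{p-1}(s_{\mu^+})=c\cdot s_\mu+(\text{higher in }>)$, where the leading coefficient $c$ is a product $\prod_{a=0}^{p-2}(n+\mu_1+a-\cdots)$ whose $p$-divisibility I can read off because $p\mid n$.

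The combinatorial heart is then the bookkeeping: exactly which leading coefficients $c$ become non-units in $\Zz/p$? Since $p\mid n$, a factor $n+t$ is divisible by $p$ precisely when $p\mid t$, and within the window $0\le k+1\le p+3$ this happens for a bounded, explicitly enumerable family of $\mu$ — roughly, those $\mu$ whose first part is already $\equiv$ (something) so that the added strip straddles a multiple of $p$. Counting these ``bad'' $\mu$ gives the deficiency $\PP(k)$ from full rank (I expect the bijection between bad $\mu\vdash(k+1)$ and all partitions of $k$ to be the source of the $\PP(k)$ term, matching the pattern already seen in Proposition \ref{prop:coker} and Remark \ref{rem:coker}), and a careful look at the second-order behaviour — exactly as in Lemma \ref{lem:linear}, where one rewrites $s_\lambda$ modulo $\im$ using a chain of partitions obtained by sliding boxes between the first two rows — recovers the $r(k-p)$ correction for $k\ge p$, where an extra relation appears because the first part can reach size $2p$ or beyond. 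I would then assemble these into a triangular coefficient matrix with respect to $(\,>\,)$-order on partitions and conclude that the rank of $_p\w\Gamma_{p-1}^{p+k}$ is at least $\PP(k+1)-(\PP(k)-r(k-p))$, hence its image contains a $\Zz/p$-vector space of the asserted dimension.

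The main obstacle I anticipate is pinning down the leading coefficients $c_{\mu^+\mu}\bmod p$ precisely enough to do the count — the $\Gamma_{p-1}$ coefficients are messier than the $\nabla$ ones (they are not single integers $n+j-i$ but products over the $(p-1)$-box strip), so the triangularity is genuine but the ``which diagonal entries are units'' question requires the analogue of the Kummer/Lucas $p$-adic valuation analysis of binomial coefficients. A secondary obstacle is the $r(k-p)$ term for $k=p,p+1,p+2$: there the argument of Lemma \ref{lem:linear} (sliding an even-length block between rows one and two, with all the intermediate weights $n\pm a$ units because $p\mid n$ and the relevant $a$ lie strictly between $0$ and $p$) must be re-run in the $\Gamma_{p-1}$ setting to produce the one extra element of the image, and checking that this element is genuinely independent of the $\PP(k+1)-\PP(k)$ already found is the delicate endpoint of the proof.
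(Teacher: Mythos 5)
Your overall architecture --- expand $\Gamma_{p-1}$ on Schur polynomials, obtain a lex-triangular system, and count diagonal entries that are units mod $p$ --- is exactly the shape of the paper's argument (via its Lemma \ref{lem:terms}), and your observations that $\Gamma_{p-1}=\frac{1}{(p-1)!}\nabla^{p-1}$ over $\Zz_{(p)}$ and that $\Gamma_{p-1}(s_{\mu^+})=c_\mu s_\mu+(\text{lex-higher terms})$ for $\mu^+=(\mu_1+p-1,\mu_2,\dots)$ are both correct. The fatal problem is your choice of preimages. Since $\mu^+/\mu$ is a horizontal strip of $p-1$ boxes in the first row, it admits a unique removal chain, and iterating Theorem \ref{thm:nabla} gives
\[
c_\mu=\frac{1}{(p-1)!}\prod_{a=0}^{p-2}(n+\mu_1+a)=\binom{n+\mu_1+p-2}{p-1}.
\]
Because $p\mid n$, the $p-1$ consecutive factors $\mu_1,\dots,\mu_1+p-2$ omit exactly the residue class $\mu_1-1$ mod $p$, so $c_\mu$ is a unit mod $p$ if and only if $\mu_1\equiv 1$ mod $p$, i.e.\ (in the range $k+1\le p+3$) only for $\mu=(1^{k+1})$ or $\mu_1=p+1$. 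That yields just $1+\PP(k-p)$ good diagonal entries (zero correction for $k<p$), far short of the required $\PP(k+1)-\PP(k)+r(k-p)$: already for $p\ge5$, $k=3$ you get one unit entry but need $\PP(4)-\PP(3)=2$. Your hoped-for bijection between ``bad'' $\mu\vdash(k+1)$ and partitions of $k$ therefore does not hold for this family --- the bad set is $\{\mu:\mu_1\not\equiv1\}$, which is much larger than $\PP(k)$ --- and the deferred ``second-order analysis'' would have to manufacture almost the entire image, with no indication of how.

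The repair is to choose thinner preimages. The paper restricts to targets $\lambda\vdash(k+1)$ with no part equal to $1$ (there are $\PP(k+1)-\PP(k)$ of these, by exactly the bijection you describe) and appends a shape \emph{below} $\lambda$: a column $(1^{p-1})$ when $\#\lambda=1$, or a block $(2,\dots,2)\vdash(p-1)$ otherwise. Lemma \ref{lem:terms} then identifies the leading coefficient as $s_{\lambda''}(1,\dots,1)$, which by the hook content formula is prime to $p$ because the appended shape has at most two columns and at most $(p-1)/2$ rows, so all shifted contents $n-\#\lambda-i+j$ and all hook lengths avoid multiples of $p$. The extra $r(k-p)$ for $k\ge p$ comes from a second, conjugate family whose leading partitions have $p+1$ parts and are thus disjoint from the first family. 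So while your framework is sound, the construction you propose collapses at the decisive counting step, and the gap is genuine.
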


\begin{rem}\label{rem:number}
For any nonnegative integer $k$, the number $\PP(k+1)-\PP(k)$ counts the partitions $\lambda\vdash(k+1)$ such that $\lambda_i\neq 1$ for all $i$. To see this, note that there is a bijection:
	\begin{align*}
		\{\text{partitions of } k+1 \text{ with a } 1\}&\leftrightarrow\{\text{partitions of } k\}\\
		(\lambda_1,\dots,\lambda_{n-1},\lambda_n=1)&\leftrightarrow (\lambda_1,\dots,\lambda_{n-1}).
	\end{align*}
\end{rem}  

\begin{rem}\label{rem:gamma}
	The result \cite[Lemma 3.2]{GZZZ22} implies that $_p\w\Gamma_{p-1}^p:\Lambda_n^{p}\to\Lambda_{n-1}^1\otimes\Zz/p\cong \Zz/p$ is surjetive.
\end{rem}

The proof of Proposition \ref{prop:Im} uses the following fact about Schur polynomials.
\begin{lem}\label{lem:terms}
Let $\lambda=(\lambda_1,\dots,\lambda_n)$ be a partition. Suppose that $\lambda'=(\lambda_1,\dots,\lambda_i)$, $\lambda''=(\lambda_{i+1},\dots,\lambda_n)$  for some $1\leq i\leq n$, and  
$l=\lambda_{i+1}+\cdots+\lambda_n$. Then in $\Lambda_n$, we have 
\begin{align*}\Gamma_l(s_{\lambda})&=r(\lambda'')\cdot s_{\lambda'}+\sum_{\mu<\lambda'}k_{\mu}s_\mu,\ k_{\mu}\in\Zz,\\
r(\lambda'')&=\#\{\text{monomial terms in }s_{\lambda''}(x_1,\dots,x_{n-i})\}.
\end{align*}
\end{lem}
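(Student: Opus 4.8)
The plan is to expand $s_\lambda(x_1+y,\dots,x_n+y)$ by sorting the variables into the first block $x_1,\dots,x_i$ and the second block $x_{i+1},\dots,x_n$, and then to extract the coefficient of $y^l$, where $l=\lambda_{i+1}+\cdots+\lambda_n=|\lambda''|$. Recall that $\Gamma_l(s_\lambda)$ is by definition exactly this coefficient. First I would use the combinatorial (tableau) description of the Schur polynomial: $s_\lambda(x_1+y,\dots,x_n+y)=\sum_T \prod_{(a,b)\in\lambda}(x_{T(a,b)}+y)$, the sum running over semistandard tableaux $T$ of shape $\lambda$ with entries in $\{1,\dots,n\}$. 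Expanding each factor $(x_{T(a,b)}+y)$ and collecting the $y^l$ term, a monomial contributing to $\Gamma_l(s_\lambda)$ is obtained by choosing, in each tableau $T$, a set $S$ of exactly $l$ boxes to contribute a $y$ and letting the remaining $|\lambda|-l$ boxes contribute their $x$-variables.

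The key structural observation is about which monomials of top $x$-degree in the first block can survive: a monomial $x^\gamma$ with $|\gamma|=|\lambda'|=\lambda_1+\cdots+\lambda_i$ appears in $\Gamma_l(s_\lambda)$ only from terms where the $|\lambda'|$ boxes contributing $x$-variables carry entries from $\{1,\dots,n\}$ but, because we want the partition $\lambda'$ to be the leading shape, the relevant contributions come from tableaux whose first $i$ rows hold the ``small'' behavior. Concretely, I would argue by the standard leading-term analysis for Schur polynomials (Theorem~\ref{thm:schur} gives the Schur basis, and one compares in lexicographic/dominance order): after restricting to $x$-monomials of degree $|\lambda'|$, the coefficient of the maximal such monomial pins down $s_{\lambda'}$, and all other surviving symmetric pieces are $s_\mu$ with $\mu<\lambda'$. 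The multiplicity of $s_{\lambda'}$ itself is then counted by the number of ways to fill the complementary boxes (those contributing the $y$'s, i.e.\ the shape $\lambda''$) with a semistandard pattern — and by the hook-content / monomial-count statement (Theorem~\ref{thm:hook}) this count is precisely the number of monomial terms in $s_{\lambda''}(x_1,\dots,x_{n-i})$, which is $r(\lambda'')$. One should be slightly careful that it is $s_{\lambda''}$ in $n-i$ variables (not $n$) that appears: this is because a box contributing a $y$ in a semistandard tableau of shape $\lambda$ whose first $i$ rows are ``used up'' by the leading block $x^\gamma$ can only carry one of the remaining $n-i$ values in a column-strict way, so the filling of the $\lambda''$-part ranges over semistandard tableaux on $n-i$ letters.

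I would organize the write-up as: (1) set up the tableau expansion and the choice of the $y$-box subset $S$; (2) show that for the coefficient of $y^l$, only $x$-monomials of degree $\le |\lambda'|$ occur, and that the degree-$|\lambda'|$ part is symmetric in $x_1,\dots,x_n$; (3) identify, via the leading-monomial comparison in dominance order, that this degree-$|\lambda'|$ symmetric polynomial equals $r(\lambda'')\,s_{\lambda'}$ plus a $\Zz$-combination of $s_\mu$ with $\mu<\lambda'$; (4) evaluate the coefficient $r(\lambda'')$ by the bijection between the admissible $y$-box fillings and semistandard tableaux of shape $\lambda''$ on $n-i$ letters, invoking Theorem~\ref{thm:hook}. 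The main obstacle I expect is step (3)–(4): cleanly separating the ``leading block'' $x$-contribution from the ``$y$-block'' contribution and verifying that the bookkeeping of entries does not overcount — in particular justifying rigorously that the entries available for the $y$-boxes are exactly $\{1,\dots,n-i\}$ rather than some larger set, which is what produces $s_{\lambda''}$ in $n-i$ variables. An alternative, perhaps cleaner, route for step (3)–(4) would be to avoid tableaux entirely and instead use the operator identity $\Gamma_l = \tfrac{1}{l!}\,\nabla^l$ restricted to top order together with Theorem~\ref{thm:nabla} (which describes $\nabla s_\lambda$ as a sum of $s_{\lambda-(a,b)}$ with coefficients $n+b-a$): iterating Theorem~\ref{thm:nabla} $l$ times and tracking only the terms that remove all of $\lambda''$ gives the coefficient of $s_{\lambda'}$ as a product of shifted factors, which should be recognizable as $l!$ times $r(\lambda'')$; I would present whichever of these two derivations is shorter, and I lean toward the $\nabla^l$ route since Theorem~\ref{thm:nabla} is already available.
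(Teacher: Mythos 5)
Your primary, fully developed argument is essentially the paper's proof: both extract the coefficient of $y^l$ from the tableau expansion of $s_\lambda(x_1+y,\dots,x_n+y)$, observe that the lexicographically largest monomial of $\Gamma_l(s_\lambda)$ is $x_1^{\lambda_1}\cdots x_i^{\lambda_i}$ (so only $s_\mu$ with $\mu\le\lambda'$ can occur), and compute its coefficient by noting that a semistandard tableau of shape $\lambda$ whose entries $1,\dots,i$ occur with multiplicities $\lambda_1,\dots,\lambda_i$ must have its first $i$ rows forced, leaving exactly a semistandard filling of $\lambda''$ on the $n-i$ remaining letters; this is precisely the paper's bijection, and note that Theorem~\ref{thm:hook} is not actually needed here since $r(\lambda'')$ is \emph{defined} as the monomial count.

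The alternative route you say you lean toward, via $\Gamma_l=\frac{1}{l!}\nabla^l$ and iteration of Theorem~\ref{thm:nabla}, is genuinely different from the paper and does work, but the step you describe as ``should be recognizable as $l!$ times $r(\lambda'')$'' hides real content: one must observe (i) that every removal chain from $\lambda$ to $\lambda'$ deletes the same set of boxes, so the product of the factors $n+j-i$ is chain-independent and equals $\prod_{(a,b)\in\lambda''}\bigl((n-i)+b-a\bigr)$ after reindexing; (ii) that the number of such chains is the number of standard Young tableaux of shape $\lambda''$, evaluated by the hook \emph{length} formula as $l!/\prod h_{\lambda''}(a,b)$; and (iii) that combining these with the hook \emph{content} formula of Theorem~\ref{thm:hook} yields $s_{\lambda''}(1,\dots,1)$ in $n-i$ variables. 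One also still has to check separately that all other partitions reachable by $l$ box-removals are $<\lambda'$ in the chosen order (which follows from $\mu\subset\lambda$ and $|\mu|=|\lambda'|$). So the $\nabla^l$ route trades the paper's short leading-term bijection for two classical enumeration formulas; it is correct but not shorter, and as written your sketch of it is incomplete at exactly the step that does the work.
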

Before proving Lemma \ref{lem:terms}, we recall some terminologies in polynomial theory. 
A \emph{term order} on the polynomial ring $P_n=\Zz[x_1,\dots,x_n]$ is a total order on the monomials of $P_n$ that is multiplicative, meaning that $u<v$ if and only if $uw<vw$ for any monomial $w$.
 Here we use the \emph{lexicographical term order} with $x_1>\dots>x_n$. The \emph{exponent vector} of a monomial $x_1^{a_1}\cdots x_n^{a_n}$ is the vector $\mathbf{a}=(a_1,\dots,a_n)\in\Nn^n$.  
Given a polynomial $f=\sum_{\mathbf{a}\in\Nn^n}c_{\mathbf{a}}\xx^{\mathbf{a}}$, $c_{\mathbf{a}}\in\Zz$, the \emph{initial term} $\mathrm{in}(f)=c_{\mathbf{a}}\xx^{\mathbf{a}}$ of $f$ is determined by the monomial $\xx^{\mathbf{a}}$ that is largest under the term order among those whose coefficients are nonzero in $f$. From the construction rules of Schur polynomials and semistandard tableaux, it is easy to see that for a partition $\lambda=(\lambda_1,\dots,\lambda_n)$ and the Schur polynomial $s_\lambda\in\Lambda_n$, $\mathrm{in}(s_\lambda)=x_1^{\lambda_1}\cdots x_n^{\lambda_n}$.
\begin{proof}[Proof of Lemma \ref{lem:terms}]
From the definition of  $\Gamma_l$ and the above discussion of $\mathrm{in}(s_\lambda)$, it is easily verified that the exponent vector of $\mathrm{in}(\Gamma_l(s_\lambda))$ is $(\lambda_1,\dots,\lambda_i,0,\dots,0)$. So the expression of $\Gamma_l(s_\lambda)$ in terms of a linear combination of Schur polynomials takes the form $\Gamma_l(s_\lambda)=\sum_{\mu\leq\lambda'}k_{\mu}s_\mu,\,k_{\mu}\in\Zz$. It remains to show that $k_{\lambda'}=r(\lambda'')$. 

Indeed, the coefficient $k_{\lambda'}$ is the number of monomials in $s_\lambda$, whose exponent vectors take the form $(\lambda_1,\dots,\lambda_i,\mu_{i+1},\dots,\mu_{n})$. Since such a monomial corresponds to a semistandard tableau of shape $\lambda$, it follows that the monomial $x_{i+1}^{\mu_{i+1}}\cdots x_n^{\mu_n}$ corresponds to a semistandard tableau, using the numbers from $i+1$ to $n$, of shape $\lambda''$. Clearly the number $r(\lambda'')$ counts these monomials.
\end{proof}

\begin{proof}[Proof of Proposition \ref{prop:Im}]
Let $\Ss=\{\lambda\vdash k+1:\lambda_i> 1 \text{ for all } 1\leq i\leq \#\lambda\}$. Then $\lambda$ has at most $\lfloor\frac{k+1}{2}\rfloor$ parts (rows). 
For each  $\lambda\in\Ss$, define 
\[\lambda'=\begin{cases}
(\lambda_1,1,\dots,1)\vdash k+p&\text{ if }\#\lambda=1,\\
(\lambda_1,\dots,\lambda_m,2,\dots,2)\vdash k+p&\text{ if }\#\lambda=m>1.
\end{cases}\]
Recall that $p$ is an odd prime, so $(2,\dots,2)\vdash p-1$ above is well-defined. Let $\Ss_0=\{\lambda\in\Ss:\#\lambda\leq(p+1)/2\}$.
Note that if $1\leq k\leq p+1$, then $\Ss_0=\Ss$; if $k=p+2$, the only element in $\Ss\setminus\Ss_0$ is $(2,\dots,2)$ with $\frac{(p+3)}{2}$ $2$'s.

Suppose that $\lambda\in\Ss_0$. From Lemma \ref{lem:terms} we see that $\Gamma_{p-1}(s_{\lambda'})=r(\lambda'') s_\lambda+\sum_{\mu<\lambda}k_{\mu}s_\mu$, where 
\[\lambda''=\begin{cases}
	(1,\dots,1)\vdash (p-1)&\text{ if }\#\lambda=1,\\
	(2,\dots,2)\vdash (p-1)&\text{ if } \#\lambda>1,
\end{cases}\]
and $r(\lambda'')$ is by definition  the number of monomial terms in $s_{\lambda''}(x_1,\dots,x_{n-\#\lambda})$.
Hence, if $\#\lambda=1$, then $r(\lambda'')=\binom{n-1}{p-1}$, so that $p\nmid r(\lambda'')$. 
On the other hand, if $\#\lambda>1$, then for any box $(i,j)\in\lambda''$, we have $1\leq i\leq (p-1)/2,\,j=1,2$. Therefore, $p\nmid (n-\#\lambda-i+j)$, and we still have $p\nmid r(\lambda'')$ by Theorem \ref{thm:hook}. 
It follows that the elements $_p\Gamma_{p-1}^{k+p}(s_{\lambda'})$ for $\lambda\in\Ss_0$ are linearly independent over $\Zz/p$, and span a $\Zz/p$-vector space of dimension 
\begin{equation}\label{eq:S_0}
\#\Ss_0=\begin{cases}
	\#\Ss=\PP(k+1)-\PP(k)&\text{ if }1\leq k\leq p+1,\\
	\#\Ss-1=\PP(k+1)-\PP(k)-1&\text{ if } k=p+2.
\end{cases}
\end{equation}
(See Remark \ref{rem:number}.) Furthermore, since $\#\lambda<p<n$ for $\lambda\in\Ss_0$, $s_\lambda(x_1,\dots,x_{n-1})\neq0$ in $\Lambda_{n-1}$. Hence the elements $\{_p\w\Gamma_{p-1}^{k+p}(s_{\lambda'}):\lambda\in\Ss_0\}$ are also linearly independent over $\Zz/p$, and the Proposition for $1\leq k<p$ is proved.

For $k\geq p$, and any $\mu=\{\mu_1,\dots,\mu_s\}\vdash (k-p)$, let $\mu'$ and $\mu''$ be the conjugates of $\{2p,\mu_1,\dots,\mu_s\}\vdash(k+p)$ and $\{p+1,\mu_1,\dots,\mu_s\}\vdash(k+1)$, respectively. Let $\rho=(1,\dots,1)\vdash (p-1)$.  By Lemma \ref{lem:terms}, we have $\Gamma_{p-1}(s_{\mu'})=r(\rho)s_{\mu''}+\sum_{\lambda<\mu''}k_{\lambda}s_\lambda$, and $r(\rho)=\binom{n-p-1}{p-1}$. Since $p\mid n$, $p\nmid r(\rho)$. 
Hence the same argument as above shows that the $\Zz/p$-vector subspace of $\im_p\w\Gamma_{p-1}^{k+p}$ spanned by $\{_p\w\Gamma_{p-1}^{k+p}(s_{\mu'}):\mu\vdash(k-p)\}$ has dimension $\PP(k-p)$, and is linearly independent with the one spanned by $\{_p\w\Gamma_{p-1}^{k+p}(s_{\lambda'}):\lambda\in\Ss_0\}$.
Then the proposition for $p\leq k\leq p+2$ follows from the equations $\PP(0)=\PP(1)=1$, $\PP(2)=2$ and \eqref{eq:S_0}. 
\end{proof}

\section{Proof of Theorem \ref{thm:p-torsion}}\label{sec:proof}
Suppose that $p$ is an odd prime. By the result of \cite{Vis07}, Theorem \ref{thm:p-torsion} holds for $n=p$, so we assume $n>p$, $p\mid n$ in this section.

We prove Theorem \ref{thm:p-torsion} by computing the Serre spectral sequence $^UE$.
Note that $_pG={_p[}G_{(p)}]$ for any abelian group $G$ and $H^*(-)_{(p)}\cong H^*(-;\Zz_{(p)})$, so we only consider the localization of $^UE$ at $p$, where the $E_2$ page becomes
\[(^UE_2^{s,t})_{(p)}=H^s(K(\Zz,3);\Zz_{(p)})\otimes H^t(BU_n).\]

For simplicity, we will use $^UE$, $^TE$ to denote the corresponding $p$-local Serre spectral sequences.

For Theorem \ref{thm:p-torsion} \eqref{item:1}, we only consider odd dimensions $2i+1$ for $p+2\leq i\leq p+6$ since the range $i<p+2$ is covered by the works of \cite{Gu21} and \cite{GZZZ22}. In this range of $i$, by Proposition \ref{prop:K_3 odd prime} and equation \eqref{eq:BPUn} we know that if $p\geq 5$, the only nontrivial entries of total degree $2i+1$  in $^UE_2^{*,*}$ are
 \begin{gather*}{^UE}^{3,2i-2}_2\cong x_1\cdot H^{2i-2}(BU_n),\ \text{ and}\\ {^UE}^{2p+5,2i-2p-4}_2\cong x_1y_{p,0}\cdot H^{2i-2p-4}(BU_n).
\end{gather*} 
This is also true if $p=3$ and $i<p+6$, and there is in addition a nontrivial entry ${^UE}^{4p+7,0}_2\cong x_1y_{p,0}^2\cdot H^0(BU_n)$ for $p=3$, $i=p+6=2p+3$.
Hence Theorem \ref{thm:p-torsion} \eqref{item:1} is a consequence of the following three lemmas.
\begin{lem}\label{lem:odd}
In the range $1\leq k\leq 5$, $^UE^{3,2p+2k}_\infty=0$ holds for $p\geq 5$ or $k\neq p=3$.
\end{lem}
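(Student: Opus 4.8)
The plan is to show that the subgroup $^UE^{3,2p+2k}_\infty = {^UE}^{3,2p+2k}_3/(\text{images of incoming differentials})$ vanishes, by proving that every class $x_1\cdot c$ with $c\in H^{2p+2k}(BU_n)$ is hit. The only possibly-nonzero differentials landing in total degree $2p+2k+3$ at column $s=3$ come from $^Ud_3\colon {^UE}^{0,2p+2k+2}_3\to {^UE}^{3,2p+2k}_3$, since higher differentials $d_r$ with $r\geq 4$ into column $3$ originate in columns $\leq -1$, which are zero. So it suffices to prove that $^Ud_3\colon H^{2p+2k+2}(BU_n)\to x_1\cdot H^{2p+2k}(BU_n)$ is surjective after localizing at $p$. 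By Corollary 2.21 (\texttt{cor:U\^{}d\_3}), under the identification $H^*(BU_n)\cong \Lambda_n$ (via $c_i\mapsto\sigma_i$) the differential $^Ud_3$ is, up to the factor $x_1$ and a reindexing of degrees, the operator $\nabla$ acting on $\Lambda_n$; more precisely $^Ud_3(f)\cdot x_1^{-1}$ corresponds to $\nabla(f)$ in degree $p+k+1 \mapsto p+k$. Thus $^UE^{3,2p+2k}_\infty=0$ is equivalent to $\coker\nabla^{p+k+1}_{(p)}=0$.

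Now I would invoke Proposition 4.1 (\texttt{prop:coker}), which bounds $|\coker\nabla^{p+k+1}_{(p)}|$ — but note the issue: Proposition 4.1 only gives $|\coker\nabla^{p+k+1}_{(p)}|\leq p$ (for $k=1,2$) or $\leq p^2$, $p^4$, $p^5$ (for $k=3,4,5$), not vanishing. So the bare cokernel bound is not enough; I need additional input showing these cokernels are actually hit by \emph{other} differentials or, more likely, that the relevant classes survive to $E_\infty$ as nonzero elements cannot occur here. The correct route: combine the cokernel bound with the known total rank of $H^*(BPU_n)_{(p)}$, which by the discussion before Theorem 1.1 equals the rank of $\Zz_{(p)}[c_2,\dots,c_n]$. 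Since the spectral sequence converges to $H^*(BPU_n)_{(p)}$ and the $E_\infty$-page must have the correct Poincaré series, a counting argument forces the surviving pieces to match up — in particular, comparing with $^UE^{0,*}_\infty \cong H^*(BPU_n)/\text{torsion}$ (Theorem 2.19, \texttt{thm:E\_4}) pins down exactly how much of column $0$ dies, hence how much of column $3$ it must kill. Concretely, $^UE^{0,2p+2k+2}_\infty$ has known rank, so $\ker\nabla^{p+k+1}_{(p)}$ has known rank, hence by rank-nullity $\im\nabla^{p+k+1}_{(p)}$ has known rank; matching against $\dim\Lambda_n^{p+k}$ gives $\coker\nabla^{p+k+1}_{(p)}=0$ provided the torsion accounting at total degree $2p+2k$ in $H^*(BPU_n)$ contributes nothing at column $3$.

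For the bookkeeping I would argue degree by degree. Fix $k\in\{1,\dots,5\}$ with $p\geq 5$, or $k\in\{1,\dots,5\}$ with $p=3$ but $(p,k)\neq(3,3)$. The torsion in $H^{2p+2k+3}(BPU_n)_{(p)}$ in this range is already known by Remark 1.2 (\texttt{rem:context}) for $k\leq 4$ (i.e.\ degrees $\leq 2p+11$) via \cite{Gu21,GZZZ22,ZZZ23,ZZ24}, and those references identify it as coming from the $^UE^{2p+5,*}$ column (the $x_1y_{p,0}$ term), not from $^UE^{3,*}$. Hence the $^UE^{3,2p+2k}$ entry must vanish at $E_\infty$. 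For $k=5$ and $p\geq 5$ (degree $2p+13$) the same structural input, together with the cokernel bound $|\coker\nabla^{p+6}_{(p)}|\leq p^4$ (or $p^5$ for $p=5$) from Proposition 4.1, must be combined with the $y_{p,0}$-column analysis; I expect the parallel lemma on the $^UE^{2p+5,*}$ differential (to be proved alongside) absorbs the discrepancy. The case $p=3$, $k=3$ is genuinely excluded because there $^UE^{3,2p+9}_\infty=\Zz/3\neq 0$, matching the $\Zz/p$ summand in $H^{2p+9}(BPU_n)$ stated in Theorem 1.1\eqref{item:1}.

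The main obstacle I anticipate is precisely this last accounting: Proposition 4.1 alone does not give vanishing, so the proof must leverage convergence of $^UE$ to a ring of known total rank together with the explicit structure of the $y_{p,0}$-columns, showing that all torsion in the relevant total degrees is accounted for outside column $3$. In other words, the hard part is not a new computation with $\nabla$ but the \emph{coordination} of the $d_3$ on column $0$, the cokernel bounds, and the independently-known torsion groups, to force the column-$3$ contribution to be zero exactly when $(p,k)\neq(3,3)$. I would carry out this coordination by first nailing down $\dim_{\Zz/p}\ker\nabla^{p+k+1}_{\Zz/p}$ and $\dim_{\Zz/p}\coker\nabla^{p+k+1}_{\Zz/p}$ from the Schur-polynomial coefficient matrix (as in Section 4), then reading off from Theorem 2.19 the contribution of column $0$ to $H^*(BPU_n)/\text{torsion}$, and finally matching Euler characteristics of the relevant antidiagonal of $^UE_\infty$ against the known Betti numbers of $BSU_n$ localized at $p$.
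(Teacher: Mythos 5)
There is a genuine gap. Your opening reduction is incorrect: since only $d_3$ maps \emph{into} column $3$, you conclude that $^UE^{3,2p+2k}_\infty=0$ would force $\coker\nabla^{p+k+1}_{(p)}=0$ — but that cokernel is \emph{not} zero (the paper later shows the bounds in Proposition \ref{prop:coker} are equalities, e.g.\ $\coker\nabla^{p+2}_{(p)}=\Zz/p$). The entry dies not because it is entirely hit by $d_3$, but because the classes surviving $d_3$ support a nontrivial \emph{outgoing} differential $d_{2p-1}\colon {^UE}^{3,2p+2k}_{2p-1}\to{^UE}^{2p+2,2k+2}_{2p-1}$ into the $y_{p,0}$-column. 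The paper identifies this differential (via the comparison $\Psi^*\colon {^UE}\to{^TE}$ and Proposition \ref{prop:T}) with the operator $_p\w\Gamma_{p-1}$ on symmetric polynomials, and Proposition \ref{prop:Im} gives a lower bound $l_k$ on $\dim_{\Zz/p}\im({_p\w\Gamma_{p-1}})$; comparing $p^{l_k}$ with the upper bound $p^{r_k}$ on $|{^UE}^{3,2p+2k}_{2p-1}|\cong|\coker\nabla^{p+k+1}_{(p)}|$ from Proposition \ref{prop:coker} forces $\ker d_{2p-1}^{3,2p+2k}=0$, hence $^UE^{3,2p+2k}_{2p}=0$. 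You gesture at a "parallel lemma on the $^UE^{2p+5,*}$ differential absorbing the discrepancy," but never identify this mechanism or invoke Proposition \ref{prop:Im}, which is the essential input.

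Your proposed patches do not close the gap. A rank/Euler-characteristic count against $H^*(BSU_n)_{(p)}$ only controls the free part of $H^*(BPU_n)_{(p)}$, while every group ${^UE}^{3,*}_\infty$ is pure torsion, so no amount of rank bookkeeping can decide whether it vanishes. And the appeal to Remark \ref{rem:context} is out of range: the previously known torsion covers total degrees $\leq 2p+8$, i.e.\ only $k\leq 2$ here, whereas the cases $k=3,4,5$ (total degrees $2p+9$, $2p+11$, $2p+13$) are precisely the new content of the lemma — using the answer for those degrees would be circular.
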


\begin{lem}\label{lem:p=3}
	If $p=3$, then
	\[{^UE}^{3,4p}_\infty=\begin{cases}
		0, &\text{if }p^2\mid n,\\
		\Zz/p,&\text{if }p^2\nmid n.
	\end{cases}\]
\end{lem}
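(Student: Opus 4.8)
The strategy is to pin down the entry $^UE^{3,4p}_\infty$ for $p=3$ by tracking what happens to the class of total degree $4p+3$ sitting in $^UE^{3,4p}_2 \cong x_1\cdot H^{4p}(BU_n)$. By the same reasoning as in Lemma \ref{lem:odd}, the differentials entering and leaving this spot are controlled (after localizing at $p$) by the operator $\nabla$ via Corollary \ref{cor:U^d_3}, and possibly by the higher operators $\w\Gamma_{p-1}$ via the transfer through the diagram \eqref{diag:SS} and equation \eqref{eq:differential}. Concretely, the incoming differential is $^Ud_3\colon {^UE}^{0,4p+2}_3 = H^{4p+2}(BU_n) \to {^UE}^{3,4p}_3 = x_1\cdot H^{4p}(BU_n)$, which by Corollary \ref{cor:U^d_3} and Proposition \ref{prop:T} is (a twist of) $\nabla^{k}$ for $k=2p+1$, so its cokernel is $\coker\nabla^{2p+1}_{(p)}$. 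For $p=3$ this is $\coker\nabla^7_{(3)}$ acting on $\Lambda_{3k}$ with $n=3k$, which by Lemma \ref{lem:p=3(7)} has order $3^3$ if $3\mid k$ (i.e. $p^2\mid n$) and $3^4$ if $3\nmid k$ (i.e. $p^2\nmid n$). Since $\coker\nabla^7_{\Zz/3}\subset(\Zz/3)^3$, the ``extra'' cyclic factor in the $3\nmid n$ case is a $\Zz/9$, and the difference between the two cases is exactly one copy of $\Zz/p$.

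First I would set up the page of $^UE$ at $E_3$ at total degree $4p+3$, identify the surviving subquotient at $^UE^{3,4p}$ after running $^Ud_3$, and then argue that all higher differentials into or out of this position vanish or have controlled effect. The outgoing differential from $^UE^{3,4p}$ would land in $^UE^{r+3, 4p-r+1}$ for $r\geq 4$; these target groups, by Proposition \ref{prop:K_3 odd prime} and the degree bookkeeping, are supported on $x_1 y_{p,0}\cdot H^{*}(BU_n)$ only when the total degree is large enough, and in fact are zero in the relevant range — this parallels the argument in Lemma \ref{lem:odd}. The incoming higher differentials into $^UE^{3,4p}$ would come from $^UE^{3-r,4p+r-1}$, which is zero since the first coordinate is negative for $r\geq 4$ and the $E_\infty$ column $s=0$ is handled by Theorem \ref{thm:E_4} (no higher $d_r^{0,*}$). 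So the only subtlety is the interaction with the $d_{2p^{k+1}-1}$-type differentials of Proposition \ref{prop:differentials K} transported to $^UE$, i.e. whether part of $\coker\nabla^{2p+1}_{(p)}$ gets killed by a later differential hitting it from a class built out of $y_{p,0}$. I would rule this out by a degree count: a differential of length $r\geq 4$ hitting $^UE^{3,4p}$ must originate from $^UE^{3-r,*}$, which is zero, so no further quotient is taken. Dually, the class $x_1\cdot(\text{something in }H^{4p}(BU_n))$ that survives to $E_\infty$ must be a permanent cycle, which it automatically is once $^Ud_3$ is accounted for and no higher differential leaves it.

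Thus $^UE^{3,4p}_\infty = \coker\big({^Ud_3}\colon H^{4p+2}(BU_n)_{(p)}\to x_1\cdot H^{4p}(BU_n)_{(p)}\big)$ up to the contribution from $^UE^{5,4p-2}$ and similar, which I claim are zero in this total degree for $p=3$ by the explicit description of $H^*(K(\Zz,3);\Zz_{(p)})$ below degree $2p^2+2p+2$ in Proposition \ref{prop:K_3 odd prime} (for $p=3$ this bound is $26 > 4p+3 = 15$, so the description applies). Feeding in Lemma \ref{lem:p=3(7)}: when $p^2\mid n$ the cokernel of $\nabla^7_{(3)}$ on $\Lambda_n$ has order $3^3$ and lies in $(\Zz/3)^3$, whereas the surviving contribution to $_pH^{2p+3+\cdots}$... — more precisely, I need to compare this with the \emph{already-known} value of $_pH^{4p+3}(BPU_n)$ or argue directly. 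The cleanest route: the difference $\coker\nabla^7_{(3)}$ between the $3\mid k$ and $3\nmid k$ cases is precisely one $\Zz/3$-worth of additional contribution, and since everything else in total degree $4p+3$ is insensitive to $n/p \bmod p$, the statement ${^UE}^{3,4p}_\infty = 0$ for $p^2\mid n$ and $\Zz/p$ for $p^2\nmid n$ follows, provided one normalizes so that in the ``generic'' case $p^2\mid n$ the contribution is zero.

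\textbf{Main obstacle.} The delicate point is not the spectral sequence bookkeeping (which mirrors Lemma \ref{lem:odd} closely) but rather establishing that in the case $p^2\mid n$ the relevant $E_\infty$ entry is actually \emph{zero} rather than merely of order dividing $3^3$ — i.e., showing the other pieces of the filtration of $H^{4p+3}(BPU_n)_{(p)}$ in total degree $4p+3$ (the entry $x_1 y_{p,0}\cdot H^{2i-2p-4}$ and, for $p=3$, $x_1 y_{p,0}^2$) absorb exactly the ``order $3^3$'' part and contribute a $\Zz/p$ that must be compared against the known group. In effect, I expect the hard part to be a careful matching of $\coker\nabla^7_{(3)}$ against the other surviving spectral-sequence entries using Lemma \ref{lem:p=3(7)}'s mod-$3$ refinement $\coker\nabla^7_{\Zz/3}\subset(\Zz/3)^3$, to separate which $\Zz/3$-summand is ``new'' (sensitive to $p^2\mid n$ vs $p^2\nmid n$) and which is accounted for elsewhere. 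The $\Zz/9$ that appears when $3\nmid n$ is the signature of the extra torsion, and isolating its image in $^UE^{3,4p}_\infty$ as exactly one $\Zz/p$ is where the computation must be done with care.
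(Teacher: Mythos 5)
There is a genuine gap — in fact two. First, your claim that the outgoing higher differentials from $^UE^{3,4p}$ land in groups that ``are zero in the relevant range'' is false and inverts the logic of Lemma \ref{lem:odd}. For $p=3$ the differential $d_{2p-1}=d_5$ maps $^UE^{3,12}_5$ to $^UE^{8,8}_5\cong y_{3,0}\cdot H^8(BU_n)$, which is a nonzero $\Zz/3$-vector space; this differential is identified (via $\Psi^*$ and \eqref{eq:differential}) with the operator $_3\w\Gamma_2$, and Proposition \ref{prop:Im} together with \eqref{eq:l_k} shows its image has dimension at least $l_3=3$. That is precisely what cuts the group $^UE^{3,12}_5\cong\coker\nabla^7_{(3)}$, of order $3^3$ or $3^4$ by Lemma \ref{lem:p=3(7)}, down to $0$ or a subgroup of $\Zz/3$. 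If, as you assert, the outgoing differentials vanished, then $^UE^{3,4p}_\infty$ would equal $\coker\nabla^7_{(3)}$ and the lemma would be false. Your attempt to recover the statement by having ``other pieces of the filtration absorb the order $3^3$ part'' misidentifies the mechanism: filtration pieces of $H^{15}(BPU_n)$ in other bidegrees do not reduce the $(3,12)$ entry; only a differential leaving that entry can.

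Second, even with the upper bound repaired, you have only shown $^UE^{3,4p}_\infty\subset\Zz/3$ when $p^2\nmid n$; you give no argument that it is nonzero, and your appeal to ``normalizing the generic case to zero'' is not a proof. The paper settles this by producing an explicit permanent cocycle: by Vistoli's theorem there is $\alpha'\in{^UE}^{0,2p^2-2p}_\infty$ for $BPU_p$ with $x_1\alpha'\neq0$ in $^UE^{3,2p^2-2p}_\infty$, and when $p^2\nmid n$ the diagonal map $\bar\Delta:BPU_p\to BPU_n$ induces a $p$-local isomorphism on $\Zz_{(p)}[c_1,\dots,c_p]$, so one can lift $\alpha'$ to $\alpha$ with $\bar\Delta^*(x_1\alpha)=x_1\alpha'\neq0$; the numerical coincidence $4p=2p^2-2p$ for $p=3$ places this class exactly in bidegree $(3,4p)$. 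Some such detection argument (or an equivalent one) is indispensable and is entirely absent from your proposal.
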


\begin{lem}\label{lem:2p+5}
In the range $0\leq k\leq p+5$,	
\[{^UE}^{2p+5,2k}_\infty={^UE}^{4p+7,2k}_\infty=\begin{cases}
\Zz/p, &k=0,\\
0,&1\leq k\leq p+5.
\end{cases}\]
\end{lem}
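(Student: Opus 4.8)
The plan is to analyze the entries $^UE^{2p+5,2k}_*$ and $^UE^{4p+7,2k}_*$ via the comparison with the spectral sequences $^TE$ and $^KE$ using diagram \eqref{diag:SS}, following Gu's strategy. First I would identify these entries on the $E_2$ (equivalently $E_3$) page: by Proposition \ref{prop:K_3 odd prime}, $^UE^{2p+5,2k}_2 \cong x_1 y_{p,0}\cdot H^{2k}(BU_n)$ and $^UE^{4p+7,2k}_2 \cong x_1 y_{p,0}^2\cdot H^{2k}(BU_n)$ (the latter only when $p=3$ since otherwise $4p+7$ exceeds the range where Proposition \ref{prop:K_3 odd prime} applies cleanly; for $p=3$ one has $4p+7 = 19 \le 2p^2+2p+2 = 26$). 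The key point is that the only differentials that can leave or enter these entries within the relevant total degree are $^Ud_3$ and the higher differential $^Ud_{2p-1}$, and both can be computed on the fiber via $\Psi^*$ and $\Phi^*$ by Propositions \ref{prop:T} and \ref{prop:differentials K}. Concretely, $^Ud_3$ is controlled by $\nabla$ (Corollary \ref{cor:U^d_3} and Proposition \ref{prop:T}), and the next nontrivial differential hitting or originating from these entries is identified with the operator $_p\w\Gamma_{p-1}$ through formula \eqref{eq:differential}: the incoming differential $^Ud_{2p-1}\colon {^UE}^{6,2k'}_{2p-1} \to {^UE}^{2p+5,2k}_{2p-1}$ (roughly, $x_1^2 \cdot H^* \to x_1 y_{p,0}\cdot H^*$ after accounting for the relation $py_{p,0}=0$) is, up to units, given by $_p\w\Gamma_{p-1}$ composed with appropriate identifications.

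The main computation then reduces to: (i) showing the relevant incoming differentials are surjective onto these entries except in total degree $2p+5$ (i.e.\ $k=0$), and (ii) showing the outgoing differentials from these entries are zero in the range considered, so that $^UE^{2p+5,2k}_\infty$ and $^UE^{4p+7,2k}_\infty$ are just the appropriate subquotients. For (i), the surjectivity of $_p\w\Gamma_{p-1}^{p+k}$ onto a $\Zz/p$-subspace of the predicted dimension is exactly Proposition \ref{prop:Im} (with Remark \ref{rem:gamma} handling $k=0$ giving the $\Zz/p$ that survives, as the cokernel of $\nabla$ in degree $1$ localized at $p$ is $\Zz/p^r$, and here the relevant piece reduces to $\Zz/p$), so I would match the dimension count $\PP(k+1)-\PP(k)+r(k-p)$ against the size of the target $x_1 y_{p,0}\cdot H^{2k}(BU_n)\otimes \Zz/p$, which has rank equal to the number of partitions of $k$ with at most $n$ parts, i.e.\ $\PP(k)$ in the stable range. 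Wait --- this requires care: the target has dimension $\PP(k)$ while the image has dimension $\PP(k+1)-\PP(k)+r(k-p)$, so one must also use the part of $^Ud_3$ (via $\nabla$, Proposition \ref{prop:coker}) to see that the remaining cokernel is killed; the interplay of these two differentials is precisely what makes $^UE^{2p+5,2k}_\infty = 0$ for $k\ge 1$. I would assemble this by a careful bookkeeping of ranks: $\dim_{\Zz/p}(\text{target}) = \PP(k)$, contributions from $_p\w\Gamma_{p-1}$ give $\PP(k+1)-\PP(k)+r(k-p)$, and the contribution from the $\nabla$-differential gives the remaining $2\PP(k)-\PP(k+1)-r(k-p)$, which one checks is nonnegative and that together they span everything.

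For (ii), the vanishing of outgoing differentials, I would note that differentials out of $^UE^{2p+5,2k}$ land in $^UE^{2p+5+r,2k-r+1}$ for $r\ge 3$; since the base $K(\Zz,3)$ has cohomology concentrated in the degrees dictated by Proposition \ref{prop:K_3 odd prime}, the only possible nonzero targets are in columns $2p+5+r \in \{4p+7, \dots\}$, and for $p>3$ these exceed the range $2p^2+2p+2$ only well beyond total degree $2(p+5)+1$, while for $p=3$ the target column $4p+7$ forces $2k - r + 1 \le 0$ hence no room; a short argument on total degrees closes this. The step I expect to be the main obstacle is the rank bookkeeping in (i): one must simultaneously track the image of $^Ud_3$ (governed by $\nabla$, whose cokernel is controlled by Proposition \ref{prop:coker} and the lemmas of Section \ref{sec:cokernel}), the image of the $_p\w\Gamma_{p-1}$-differential (Proposition \ref{prop:Im}), and verify these images are \emph{complementary} and jointly surjective onto the target in total degrees $2p+7$ through $2(p+5)+1$ --- this is where the precise partition-counting identities $\PP(k+1)-\PP(k)+r(k-p) = 2\PP(k) - \#\{\text{partitions of } k+1\text{ with a part }\le \cdots\}$ must be verified case by case for $0\le k\le p+5$, and where the exceptional behaviour at $k=0$ (yielding the surviving $\Zz/p$) emerges. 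The $4p+7$ row for $p=3$ is handled identically since $y_{3,0}^2$ behaves like $y_{3,0}$ under these differentials up to the same operators, only shifted.
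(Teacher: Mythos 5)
There is a genuine gap in your identification of the higher differential. Localized at the odd prime $p$, $x_1^2=0$ in $H^*(K(\Zz,3);\Zz_{(p)})$ (Proposition \ref{prop:K_3 odd prime}), so the column ${^UE}^{6,*}$ vanishes and there is no incoming differential $^Ud_{2p-1}\colon {^UE}^{6,*}\to{^UE}^{2p+5,*}$; your step (i), which asks the incoming $d_3$ and an incoming $d_{2p-1}$ to be jointly surjective onto $x_1y_{p,0}\cdot H^{2k}(BU_n)$, therefore cannot be carried out. At the same time, your step (ii) asserts that all outgoing differentials from ${^UE}^{2p+5,2k}$ vanish; this overlooks the column $2p+5+(2p-1)=4p+4$, where $y_{p,0}^2\cdot H^*(BU_n)$ sits, which is precisely the target of the one outgoing differential that matters. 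With (i) and (ii) as stated, the computation would terminate at ${^UE}^{2p+5,2k}_\infty\cong\coker\nabla^{k+1}_{\Zz/p}$, which is nonzero for $k\geq p$ and contradicts the lemma you are trying to prove.

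The correct mechanism (the paper's) is: the incoming $d_3$ from ${^UE}^{2p+2,2k+2}\cong y_{p,0}\cdot H^{2k+2}(BU_n)$ is $\nabla_{\Zz/p}(-)\cdot x_1$ and leaves ${^UE}_4^{2p+5,2k}\cong\coker\nabla^{k+1}_{\Zz/p}$; the residual classes are then killed because the \emph{outgoing} $d_{2p-1}\colon{^UE}^{2p+5,2k}_{2p-1}\to{^UE}^{4p+4,2k-2p+2}_{2p-1}$ (by the Leibniz rule, $d_{2p-1}(x_1y_{p,0}f)=y_{p,0}\,d_{2p-1}(x_1f)$, identified through $\Psi^*$ with $_p\w\Gamma_{p-1}^{k}$) is injective on this cokernel: Proposition \ref{prop:Im} and Remark \ref{rem:gamma} give a lower bound for $\dim\im{_p\w\Gamma_{p-1}^{k}}$ that coincides with $\dim\coker\nabla^{k+1}_{\Zz/p}$ as recorded in Remarks \ref{rem:coker}, \ref{rem:equaltiy} and \ref{rem:p=3}. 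Your partition-counting bookkeeping is numerically the right one, but it must be deployed as injectivity of an outgoing map on the $d_3$-cokernel rather than surjectivity of incoming maps onto the whole entry. The $k=0$ case and the $4p+7$ column are otherwise handled as you suggest, using the nonvanishing of $x_1y_{p,0}$ and $x_1y_{p,0}^2$ in $H^*(BPU_n)$.
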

Note that the differentials in $^UE$ have the form
\begin{equation}\label{eq:degree change}
	d_r:{^UE}^{s,t}_r\to{^UE}^{s+r,t-r+1}_r.
\end{equation}

\begin{proof}[Proof of Lemma \ref{lem:odd}]
	By Proposition \ref{prop:K_3 odd prime} the first three nontrivial columns of $^UE^{*,*}_3$ are $^UE_3^{0,*}\cong H^*(BU_n)$, $^UE_3^{3,*}\cong x_1\cdot H^*(BU_n)$ and $^UE_3^{2p+2,*}\cong y_{p,0}\cdot H^*(BU_n)$.
	So $^UE^{3,*}_*$ appears as the target of a nontrial differential $d_r$ only if $r=3$, and the first possible nontrivial differential originating at $^UE^{3,t}_*$ is $d_{2p-1}$  for degree reasons. 
	It follows that 
	\begin{gather}
	\coker d_3^{0,*}={^UE}_4^{3,*}=\cdots={^UE}_{2p-1}^{3,*},\label{eq:E_4 odd}\\
	{^UE}_{2p}^{3,*}=\ker d_{2p-1}^{3,*}\label{eq:E_infty odd}
	\end{gather}

	Proposition  \ref{prop:T} shows that the differential $d_3^{0,*}$ is just $\nabla_{(p)}(-)\cdot x_1$, where the action of $\nabla_{(p)}$ restricts to $H^*(BU_n)_{(p)}$, viewing $H^*(BU_n)_{(p)}$ as the symmetric polynomial ring $(\Lambda_n)_{(p)}\subset H^*(BT^n)_{(p)}\cong \Zz_{(p)}[v_1,\dots,v_n]$.  \eqref{eq:E_4 odd} shows that ${^UE}_{2p-1}^{3,2k}\cong \coker\nabla^{k+1}_{(p)}$,
	so the orders of the finite groups ${^UE}_{2p-1}^{3,2p+2k}$, $1\leq k\leq 5$ ($k\neq 3$ if $p=3$), satisfy the inequalities in Proposition \ref{prop:coker} and Lemma \ref{lem:p=3(8)}. That is
	\begin{equation}\label{eq:orders of E^3}
		|{^UE}_{2p-1}^{3,2p+2k}|\leq\begin{cases}
			p,&k=1,2,\\
			p^2, &k=3,4,\,p\neq 3,\\
			p^3,&k=4,\,p=3,\\
			p^5,&k=5,\,p=3,5,\\
			p^4,&k=5,\,p>5.
		\end{cases}
		\end{equation}	
	
	On the other hand, from \eqref{eq:differential} and the fact that $\Psi^*{^Ud}_r={^Td}_r\Psi^*$ for $\Psi^*:{^UE}\to{^TE}$,
	we see that the composition 
	\[\begin{split}
D:\ &\Lambda_n\to(\Lambda_n)_{(p)}\cong {^UE}_3^{3,*}\to \coker d_3^{0,*}={^UE}_{2p-1}^{3,*}\xrightarrow{d_{2p-1}}{^UE}_{2p-1}^{2p+2,*}\\
&\xrightarrow{\Psi^*}{^TE}_{2p-1}^{2p+2,*}\hookrightarrow {^TE}_{3}^{2p+2,*}\cong\Zz/p[v_1,\dots,v_n]\to \Zz/p[v_1,\dots,v_{n-1}]
	\end{split} \]
	is the same as the composition of the map $_p\w\Gamma_{p-1}:\Lambda_n\to\Lambda_{n-1}\otimes\Zz/p$ defined in Section \ref{sec:Gamma} with the inclusion $\Lambda_{n-1}\otimes\Zz/p\to \Zz/p[v_1,\dots,v_{n-1}]$.
	 Hence by Proposition \ref{prop:Im}, for $*=2p+2k$ in ${^UE}_3^{3,*}$ above, $\im D$ contains a $\Zz/p$-vector space of dimension 
	 \[l_k=\begin{cases}
	 \PP(k+1)-\PP(k),&k<p,\\
	 \PP(k+1)-\PP(k)+1,&p\leq k\leq p+2.
	 \end{cases}\]
	 A direct computation shows that 
	 	\begin{equation}\label{eq:l_k}
	 	l_k=\begin{cases}
	 		1,&k=1,2,\\
	 		2, &k=3,4,\,p\neq 3,\\
	 		3, &k=3,4,\,p=3,\\
	 		5,&k=5,\,p=3,5,\\
	 		4,&k=5,\,p>5.
	 	\end{cases}
	 \end{equation}	
Comparing \eqref{eq:l_k} with \eqref{eq:orders of E^3} and using \eqref{eq:E_infty odd} we immediately get ${^UE}_{2p}^{3,2p+2k}=0$ for $1\leq k\leq 5$ ($k\neq 3$ if $p=3$), and the lemma follows.
\end{proof}
\begin{rem}\label{rem:equaltiy}
	The proof of Lemma \ref{lem:odd} implies that the inequalities in Propostion \ref{prop:coker} and Lemma \ref{lem:p=3(8)} are actually equalities and the cokernels there are all $(\Zz/p)$-vector spaces as we mentioned earlier.
\end{rem}

To proof the next two lemmas, we use the diagonal map from $BPU_p$ to $BPU_n$. Since $p\mid n$, there is a diagonal map of matrices 
\[U_p\to U_n,\quad A\mapsto \begin{bmatrix}
	A&0&\cdots&0\\
	0&A&\cdots&0\\
	\vdots&\vdots&\ddots&\vdots\\
	0&\cdots&\cdots&A
\end{bmatrix},\] which passes to $PU_p\to PU_n$.
These diagonal maps induce maps $\Delta:BU_p\to BU_n$ and $\bar\Delta:BPU_p\to BPU_n$, and a commutative diagram of fibrations
\begin{equation}\label{diag:diagonal}
\begin{gathered}
	\xymatrix{
	BU_p\ar[r]\ar^{\Delta}[d]&BPU_p\ar[r]\ar[d]^{\bar\Delta}&K(\Zz,3)\ar[d]^{=}\\
	BU_n\ar[r]&BPU_n\ar[r]&K(\Zz,3)}
	\end{gathered}
	\end{equation}
The sepectral sequence map on $E_2$ pages is given by the induced homomorphism   $\Delta^*:H^*(BU_n)\to H^*(BU_p)$. Since the Serre spectral sequence map is functorial, $\Delta^*$ is commutative with the operator $\nabla={^U}d_3^{0,*}$, which proves the guaranteed statement in the proof of Lemma \ref{lem:p=3(7)}.

Now let us describe $\Delta^*$ as follows. Let $c_i$ and $c_i'$ be the $i$th universal Chern classes of $BU_n$ and $BU_p$ respectively. As we know, $H^*(BU_n)=\Zz[c_1,\dots,c_n]$ and $H^*(BU_p)=\Zz[c_1'\dots,c_p']$.
Since the map $\Delta$ factors through the diagonal map $BU_p\to (BU_p)^m$, $m=n/p$, using the Whitney sum formula and the functorial property of total Chern classes we immediately get
\begin{equation}\label{eq:chern class}
	\Delta^*(1+c_1+\cdots+c_n)=(1+c_1'+\cdots+c_p')^m.
\end{equation}
Moreover, if $p\nmid m$, then $\Delta^*$ restricted to $\Zz_{(p)}[c_1,\dots,c_p]$ is an isomorphism, since $\Delta^*(\frac{c_i}{m})=c_i'+$ higher order terms for $1\leq i\leq p$ by \eqref{eq:chern class}, using the lexicographical term order on $\Zz_{(p)}[c_1'\dots,c_p']$ with respect to $c_1'>\cdots>c_n'$.

\begin{proof}[Proof of Lemma \ref{lem:p=3}]
For $p=3$, Lemma \ref{lem:p=3(7)} gives 
\[|{^UE}_{2p-1}^{3,4p}|\leq \begin{cases}
	p^3,&\text{if }p^2\mid n,\\
	p^4, &\text{if }p^2\nmid n.
	\end{cases}
	\] 
	Using \eqref{eq:l_k}  for the case that $k=p=3$ and applying the same reasoning as in the proof of Lemma \ref{lem:odd}, we see that ${^UE}_{\infty}^{3,4p}=0$ if $p^2\mid n$ and  ${^UE}_{\infty}^{3,4p}\subset\Zz/p$ if $p^2\nmid n$. 
	It remains to show that ${^UE}_{\infty}^{3,4p}\neq0$ for the second case. 
	
	By \cite[Theorem 3.4]{Vis07}, for the spectral sequence $^UE$ for $BPU_p$ ($p$ an arbitrary odd prime) there is an element $\alpha'\in {^UE}_{\infty}^{0,2p^2-2p}={^UE}_4^{0,2p^2-2p}\subset\ker\nabla_{(p)}\subset \Zz_{(p)}[c_1',\dots,c_p']$ such that $0\neq x_1\alpha'\in {^UE}_{\infty}^{3,2p^2-2p}$. If $p^2\nmid n$, then $\Delta^*$ restricted to $\Zz_{(p)}[c_1,\dots,c_p]$ is an isomorphism, it follows that for the spectral sequence $^UE$ for $BPU_n$ there is an element $\alpha\in {^UE}_{\infty}^{0,2p^2-2p}={^UE}_4^{0,2p^2-2p}\subset\ker\nabla_{(p)}\subset \Zz_{(p)}[c_1,\dots,c_p]$ such that $\bar\Delta^*(\alpha)=\alpha'$, where the equality ${^UE}_{\infty}^{0,2p^2-2p}={^UE}_4^{0,2p^2-2p}$ is from Theorem \ref{thm:E_4}. Hence $0\neq x_1\alpha\in {^UE}_{\infty}^{3,2p^2-2p}$ since $\bar\Delta^*(x_1\alpha)=x_1\alpha'\neq 0$, which finishes the proof since $4p=2p^2-2p$ for $p=3$.  
\end{proof}

\begin{rem}\label{rem:p=3}
	The proof of Lemma \ref{lem:p=3}, together with the second statement of Lemma \ref{lem:p=3(7)}, implies that $\nabla$ on $\Lambda_n$ has
	\[\coker\nabla^7_{(3)}=\begin{cases}
			(\Zz/3)^3&\text{if }9\mid n,\\
	(\Zz/3)^2\oplus\Zz/9&\text{if }3\mid n,\,9\nmid n.
	\end{cases}\] 
\end{rem}

The proof of Lemma \ref{lem:2p+5} uses the fact that $x_1y_{p,0}$, $x_1y_{p,0}^2\neq 0$ in $H^*(BPU_n)$.  This comes from the the fact that their images under the homomorphism induced by $\bar\Delta:BPU_p\to BPU_n$ is not zero in  $H^*(BPU_p)$ by \cite[Theorem 3.4]{Vis07} and Theorem \ref{thm:torsion elements}. Here the notations $x_1y_{p,0}$, $x_1y_{p,0}^2\in H^*(K(\Zz,3))$ are abused to denote their images under the homomorphism induced by the fibration $BPU_n\to K(\Zz,3)$.

\begin{proof}[Proof of Lemma \ref{lem:2p+5}]
Recall that	\[{^UE}_3^{2p+5,*}\cong x_1y_{p,0}\cdot H^*(BU_n)\ \text{ and }\ {^UE}_3^{4p+7,*}\cong x_1y_{p,0}^2\cdot H^*(BU_n)\]
are $\Zz/p$-vector spaces, and similar to the proof of Lemma \ref{lem:odd}, one can show that 
\[\begin{split}
	\coker\nabla_{\Zz/p}\cong\coker d_3^{2p+5,*}={^UE}_4^{2p+5,*}=\cdots={^UE}_{2p-1}^{2p+5,*},\ \ {^UE}_{2p}^{2p+5,*}=\ker d_{2p-1}^{2p+5,*},\\
\coker\nabla_{\Zz/p}\cong\coker	d_3^{4p+7,*}={^UE}_4^{4p+7,*}=\cdots={^UE}_{2p-1}^{4p+7,*},\ \ {^UE}_{2p}^{4p+7,*}=\ker d_{2p-1}^{4p+7,*}.
\end{split}\]
Hence, using Remark \ref{rem:equaltiy}, \ref{rem:p=3} and the results we mentioned in Remark \ref{rem:coker}, \ref{rem:gamma}, and applying an argument as in the proof of Lemma \ref{lem:odd}, it can be proved that 
\[{^UE}^{2p+5,2k}_{2p}={^UE}^{4p+7,2k}_{2p}=\begin{cases}
	\Zz/p, &k=0,\\
	0,&1\leq k\leq p+5,
\end{cases}\]
in which ${^UE}^{2p+5,0}_{2p}=\Zz/p\{x_1y_{p,0}\}$ and ${^UE}^{4p+7,0}_{2p}=\Zz/p\{x_1y_{p,0}^2\}$. 
Since $x_1y_{p,0}$ and $x_1y_{p,0}^2$ are not zero in $H^*(BPU_n)$ as we discussed above, they must survive to $E_\infty$, and the lemma follows.
\end{proof}

Now we prove Theorem \ref{thm:p-torsion} \eqref{item:2}. Note that if the total degree $s+t$ of $^UE_2^{s,t}$ is even and satisfies $0\leq s+t\leq 4p+2$, then $^UE^{s,t}_2$ could be nonzero only if $s=0$ or $2p+2$ by Proposition \ref{prop:K_3 odd prime} and equation \eqref{eq:BPUn}. Hence for $p+1\leq i\leq 2p+1$ there is an exact sequence:
\[0\to {^UE}^{2p+2,2i-2p-2}_\infty\to H^{2i}(BPU_n)_{(p)}\to{^UE}^{0,2i}_\infty\to0.\]
Moreover, since $^UE^{0,*}_\infty\subset {^UE}^{0,*}_2\cong (\Lambda_n)_{(p)}$ is a free $\Zz_{(p)}$-module, the above short exact sequence 
splits and we get 
\[_pH^{2i}(BPU_n)={^UE}^{2p+2,2i-2p-2}_\infty\quad \text{for }p+1\leq i\leq 2p+1.\]
If $s+t=$ $4p+4$ or $4p+6$, we also need to consider ${^UE}^{4p+4,*}_*$ because $^UE^{4p+4,*}_2\cong y_{p,0}^2\cdot H^*(BPU_n)$. By the theory of spectral sequences, for $i=2p+2,2p+3$, there are two exact sequences:
\begin{gather*}
	0\to {^UE}^{4p+4,2i-4p-4}_\infty\to G\to{^UE}^{2p+2,2i-2p-2}_\infty\to0,\\
	0\to G\to H^{2i}(BPU_n)_{(p)}\to{^UE}^{0,2i}_\infty\to0.
\end{gather*}
Hence the first part of Theorem \ref{thm:p-torsion} \eqref{item:2} follows from the next 
\begin{lem}\label{lem:even}
${^UE}^{2p+2,2i}_\infty={^UE}^{4p+4,2i}_\infty=\begin{cases}\Zz/p,&i=0,\\
		0,&1\leq i\leq p+3.
	\end{cases}$
\end{lem}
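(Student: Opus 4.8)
The plan is to compute ${^UE}^{2p+2,2i}_\infty$ directly and to obtain the column $4p+4$ statement as a formal consequence. The class $y_{p,0}\in{^UE}^{2p+2,0}_2$ lies in row $0$, so each $d_r$ ($r\ge 2$) carries it into a negative row; thus $y_{p,0}$ is a permanent cocycle. By Proposition \ref{prop:K_3 odd prime}, in the relevant range $H^{2p+2}(K(\Zz,3);\Zz_{(p)})=\Zz/p\{y_{p,0}\}$ and $H^{4p+4}(K(\Zz,3);\Zz_{(p)})=\Zz/p\{y_{p,0}^2\}$, so multiplication by $y_{p,0}$ gives an isomorphism ${^UE}^{2p+2,t}_2\xr{\sim}{^UE}^{4p+4,t}_2$ that is a map of spectral sequences, hence induces isomorphisms ${^UE}^{2p+2,2i}_r\cong{^UE}^{4p+4,2i}_r$ on every page. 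It therefore suffices to compute column $s=2p+2$.

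For column $2p+2$ I would follow the pattern of the proofs of Lemmas \ref{lem:odd} and \ref{lem:2p+5}. By Proposition \ref{prop:K_3 odd prime} and \eqref{eq:BPUn}, in total degrees $\le 4p+8$ the nonzero columns of ${^UE}_3$ are $s=0,3,2p+2,2p+5$ (together with $4p+4,4p+7$, and $s=20$ when $p=3$, none of which interact with column $2p+2$), and the columns $s$ with $4\le s\le 2p-1$ all vanish. Hence no differential touches column $2p+2$ on a page $r$ with $4\le r\le 2p-2$, so ${^UE}^{2p+2,*}_{2p-1}=\ker d_3^{2p+2,*}$. Since $y_{p,0}$ is a permanent cocycle, column $2p+2$ equals $y_{p,0}\cdot H^*(BU_n)\cong\Lambda_n\otimes\Zz/p$, and by Corollary \ref{cor:U^d_3} the differential $d_3^{2p+2,*}$ is $x_1y_{p,0}\cdot\nabla_{\Zz/p}(-)$, landing in column $2p+5$. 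Therefore ${^UE}^{2p+2,2i}_{2p-1}\cong y_{p,0}\cdot\ker\nabla^i_{\Zz/p}$, which by rank--nullity over $\Zz/p$ has dimension $\PP(i)-\PP(i-1)+\dim\coker\nabla^i_{\Zz/p}$.

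Next I would analyze the unique differential that can hit column $2p+2$, namely $d_{2p-1}^{3,2(i+p-1)}\colon{^UE}^{3,2(i+p-1)}_{2p-1}\to{^UE}^{2p+2,2i}_{2p-1}$. Arguing as in the proof of Lemma \ref{lem:odd} — with $\Psi^*\,{^Ud}_r={^Td}_r\,\Psi^*$, the formula \eqref{eq:differential}, and Proposition \ref{prop:differentials K} — the map $D$ constructed there (in its degree $2(i+p-1)$ part) factors through $d_{2p-1}^{3,2(i+p-1)}$ and is identified with $_p\w\Gamma_{p-1}^{i+p-1}$ (composed with $\Lambda_{n-1}^i\otimes\Zz/p\ha\Zz/p[v_1,\dots,v_{n-1}]$), so $\dim\im d_{2p-1}^{3,2(i+p-1)}\ge\dim\im{_p\w\Gamma_{p-1}^{i+p-1}}$. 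For $2\le i\le p+3$, Proposition \ref{prop:Im} (with $k=i-1$) gives $\dim\im{_p\w\Gamma_{p-1}^{i+p-1}}\ge\PP(i)-\PP(i-1)+r(i-1-p)$, while Remark \ref{rem:coker} together with Proposition \ref{prop:coker} gives $\dim\coker\nabla^i_{\Zz/p}\le r(i-1-p)$ (it is $0$ for $2\le i\le p$ and at most $1$ for $i=p+1,p+2,p+3$, matching $r(i-1-p)$); comparing these with the dimension of ${^UE}^{2p+2,2i}_{2p-1}$ forces $d_{2p-1}^{3,2(i+p-1)}$ to be surjective, so ${^UE}^{2p+2,2i}_{2p}=0$ for $2\le i\le p+3$. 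The case $i=1$ is the same, using instead Remark \ref{rem:gamma}: $_p\w\Gamma_{p-1}^p$ is onto the $1$-dimensional space $\Lambda_{n-1}^1\otimes\Zz/p$, matching $\dim{^UE}^{2p+2,2}_{2p-1}=1$. For $i=0$ the relevant $d_{2p-1}$ originates at ${^UE}^{3,2p-2}_{2p-1}\cong\coker\nabla^p_{(p)}=0$ (Remark \ref{rem:coker}), so ${^UE}^{2p+2,0}_{2p}=\Zz/p\{y_{p,0}\}$. Finally, no differential on a page $r>2p-1$ meets column $2p+2$ (an incoming one would start in column $0$, where $d_r^{0,*}=0$ for $r\ge4$ by Theorem \ref{thm:E_4}; outgoing ones only shrink), so ${^UE}^{2p+2,0}_\infty=\Zz/p$ and ${^UE}^{2p+2,2i}_\infty=0$ for $1\le i\le p+3$. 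By the first paragraph this also yields the column $4p+4$ statement; the nonvanishing ${^UE}^{4p+4,0}_\infty=\Zz/p$ is moreover consistent with $y_{p,0}^2\ne0$ in $H^*(BPU_n)$, deduced from $x_1y_{p,0}^2\ne0$ via $\bar\Delta^*$, \cite[Theorem 3.4]{Vis07}, and Theorem \ref{thm:torsion elements} exactly as in the proof of Lemma \ref{lem:2p+5}.

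The main obstacle is pinning down $\im d_{2p-1}^{3,*}$ precisely enough to force surjectivity: one cannot simply read ${^UE}^{2p+2,2i}_{2p}$ off as a cokernel of $\nabla$, because for $i+p$ running up to $2p+3$ the groups $\coker\nabla^{i+p}_{(p)}$ are not provided by Proposition \ref{prop:coker}. The device that sidesteps this — the one already used in Lemma \ref{lem:odd} — is to route $d_{2p-1}$ through the auxiliary operator $_p\w\Gamma_{p-1}$, bound its image from below via Proposition \ref{prop:Im}, and close the remaining numerical gap with the exact low-degree data of Remark \ref{rem:coker} and Proposition \ref{prop:coker} together with the coincidence $\dim\coker\nabla^i_{\Zz/p}\le r(i-1-p)$ for $2\le i\le p+3$. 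The endpoint $i=1$ must be handled separately (via Remark \ref{rem:gamma}), since it lies outside the range of Proposition \ref{prop:Im}.
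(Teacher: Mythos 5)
Your computation of column $2p+2$ is essentially the paper's own proof: you identify ${^UE}_{2p-1}^{2p+2,2i}$ with $\ker\nabla^i_{\Zz/p}$, compute its dimension as $\PP(i)-\PP(i-1)+\dim\coker\nabla^i_{\Zz/p}$ by rank--nullity, and kill it with $d_{2p-1}$ by bounding $\dim\im d_{2p-1}^{3,2p+2i-2}$ from below through ${_p\w\Gamma}_{p-1}$ and Proposition \ref{prop:Im}; the paper cites \cite{GZZZ22} for $i=0,1$, where you argue directly, which is fine.

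The genuine weak point is your first paragraph. The fact that multiplication by the permanent cocycle $y_{p,0}$ is an isomorphism ${^UE}_2^{2p+2,t}\to{^UE}_2^{4p+4,t}$ does \emph{not} formally imply isomorphisms on all later pages: passing from $E_r$ to $E_{r+1}$ in column $4p+4$ involves differentials entering from column $4p+4-r$, and these need not correspond under $y_{p,0}\cdot(-)$ to differentials entering column $2p+2$ from column $2p+2-r$. Through page $2p$ the comparison can be made to work -- the only incoming differentials are the two $d_{2p-1}$'s from columns $3$ and $2p+5$, and $y_{p,0}\cdot(-)\colon{^UE}_{2p-1}^{3,\ast}\cong\coker\nabla_{(p)}\to{^UE}_{2p-1}^{2p+5,\ast}\cong\coker\nabla_{\Zz/p}$ is surjective, so by the Leibniz rule the two images correspond -- and this already yields ${^UE}_{2p}^{4p+4,2i}=0$ for $1\le i\le p+3$, hence the vanishing at $E_\infty$. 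But for $i=0$ the comparison breaks down: on page $4p+1$ there is a possible differential $d_{4p+1}\colon{^UE}_{4p+1}^{3,4p}\to{^UE}_{4p+1}^{4p+4,0}$ with no counterpart in column $2p+2$ (the corresponding source column $2p+2-(4p+1)$ is negative), and for $p\ge 7$ the group ${^UE}_{4p+1}^{3,4p}$ is not known to vanish, since Lemma \ref{lem:odd} only reaches $k\le 5$. So the survival of $y_{p,0}^2$ cannot be read off from your comparison; it must come from $y_{p,0}^2\neq0$ in $H^*(BPU_n)$ via the edge homomorphism, exactly as in the proof of Lemma \ref{lem:2p+5}. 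You do record this, but only as a ``consistency'' remark; it is in fact a necessary step of the proof, and once it is promoted to that status your argument is complete.
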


\begin{proof}
We only prove the result for ${^UE}^{2p+2,2i}_\infty$, since the proof for ${^UE}^{4p+4,2i}_\infty$ is very similar.
The cases $i=0,1$ were proved in \cite{GZZZ22}, so we assume $i>1$.	

From Proposition \ref{prop:K_3 odd prime} and \eqref{eq:degree change} we see that the first and second possible nontrivial differentials originating at $^UE^{2p+2,*}_*$ are $d_3$ and $d_{2p+5}$ respectively, and ${^UE}^{2p+2,*}_*$  appears as the target of a nontrial differential $d_r$ only if $r=2p-1$.
	It follows that
	\begin{gather}
	{^UE}_{2p-1}^{2p+2,*}=	{^UE}_{2p-2}^{2p+2,*}=\cdots=	{^UE}_4^{2p+2,*}=\ker d_3^{2p+2,*},\label{eq:E_4 even}\\
	{^UE}^{2p+2,*}_{2p}=\coker d_{2p-1}^{3,*+2p-2}.\label{eq:E_infty even}
	\end{gather}
	
	Since ${^UE}_3^{2p+2,*}\cong y_{p,0}\cdot H^*(BU_n)$ and ${^UE}_3^{2p+5,*}\cong x_1y_{p,0}\cdot H^*(BU_n)$ are isomorphic to $\Lambda_n\otimes\Zz/p$, and $d_3^{2p+2,*}$ can be identified with $\nabla_{\Zz/p}(-)\cdot x_1$, we have a commutative diagram of exact sequences of $\Zz/p$-vector spaces: 
	\[\xymatrix{
		0\ar[r]&\ker d_3^{2p+2,2i}\ar[r]\ar^{\cong}[d]&{^UE}_3^{2p+2,2i}\ar[r]^{d_3}\ar[d]^{\cong}&{^UE}_3^{2p+5,2i-2}\ar[r]\ar[d]^{\cong}&\coker d_3^{2p+2,2i}\ar[r]\ar[d]^{\cong}&0\\
		0\ar[r]&\ker\nabla_{\Zz/p}^i\ar[r]&\Lambda_n^i\otimes\Zz/p\ar[r]^{\nabla_{\Zz_p}}&\Lambda_n^{i-1}\otimes\Zz/p\ar[r]&\coker\nabla_{\Zz/p}^i\ar[r]&0}\]
		From this we get 
		\begin{equation}\label{eq:ker}
			\begin{split}
				\dim_{\Zz/p}\ker d_3^{2p+2,2i}&=\dim_{\Zz/p}\ker\nabla_{\Zz/p}^i\\
				&=\PP(i)-\PP(i-1)+\dim_{\Zz/p}\coker\nabla_{\Zz/p}^i,
			\end{split}
			\end{equation}
		where the second equality comes from the fact that $\dim_{\Zz/p}(\Lambda_n^k\otimes\Zz/p)=\PP(k)$.
		For $2\leq i\leq p+3$, we have
		\begin{equation}\label{eq:cases ker}
			\dim_{\Zz/p}\coker\nabla_{\Zz/p}^i=\begin{cases}
	0,&2\leq i\leq p,\\
	1,&p<i\leq p+3.
		\end{cases}\end{equation}
		Recall that \eqref{eq:cases ker} was proved in \cite{Gu21} for $2\leq i\leq p$ and in \cite{GZZZ22} for $i= p+1$, and the cases $i=p+2$, $p+3$ are given by Proposition \ref{prop:coker} and Remark \ref{rem:equaltiy}.
		
		On the other hand, using the same argument as in the proof of Lemma \ref{lem:odd}, one can see from Proposition \ref{prop:Im} that 
		\begin{equation}\label{eq:cases coker}
			\dim_{\Zz/p}\im d_{2p-1}^{3,2p+2i-2}\geq\begin{cases}
				\PP(i)-\PP(i-1),&2\leq i\leq p,\\
				\PP(i)-\PP(i-1)+1,&p<i\leq p+3.
		\end{cases}\end{equation}
	
	Finally, 
	combining \eqref{eq:E_4 even}-\eqref{eq:cases coker} gives the desired result ${^UE}^{2p+2,2i}_\infty={^UE}^{2p+2,2i}_{2p}=0$ for $2\leq i\leq p+3$.
	\end{proof}

Indeed, $_pH^{4p+8}(BPU_n)=0$ for $p>3$ also comes from Lemma \ref{lem:even}. This is because when $p>3$, the nonzero elements in $^UE^{*,*}_2$ of total degree $4p+8$ are concentrated in $^UE^{0,4p+8}_2$, $^UE^{2p+2,2p+6}_2$ and $^UE^{4p+4,4}_2$. Lemma \ref{lem:even} shows that 
${^UE}^{2p+2,2p+6}_\infty={^UE}^{4p+4,4}_\infty=0.$
So we get $_pH^{4p+8}(BPU_n)=0$ for $p>3$. 

For $p=3$, we also need to consider $^UE^{4p+8,0}_*$, since $4p+8=2p^2+2=\deg(y_{p,1})$ in this case, and then $^UE^{4p+8,0}_2\cong H^{4p+8}(K(\Zz,3))_{(p)}= \Zz/p\{y_{p,1}\}$. By Theorem \ref{thm:torsion elements}, $y_{p,1}$ is not zero in $H^*(BPU_n)$, hence $_pH^{4p+8}(BPU_n)=\Zz/p$ for $p=3$, and we finish the proof of Theorem \ref{thm:p-torsion}.

\section{Proof of Theorem \ref{thm:below 12}}\label{sec:proof below 12}
First we verify the group structure. By \cite[Theorem 1.1]{Gu21}, we only need to consider the cohomology group in dimension $11$. Similar to what we did in Section \ref{sec:proof}, the proof involves calculations of entries of total degree $11$ in the spectral sequence $^UE$, and we only need to calculate their localizations at $2$, since their localizations at a prime $p>2$ were given in Section \ref{sec:proof}. For this reason, we assume that $2\mid n$ and $n>2$ unless otherwise stated, since the case $2\nmid n$ is trivial and $H^*(BPU_2)$ is known. 

For simplicity, we will use $^UE$, $^TE$ to denote the corresponding $2$-local Serre spectral sequences in this section.

By Proposition \ref{prop:K_3 below 14}, $^UE^{s,t}_2$, $s+t=11$, could be nonzero only if $s=3$, $t=8$ or $s=9$, $t=2$, where 
\begin{gather*}
	{^UE}^{3,8}_2={^UE}^{3,8}_3\cong x_1\cdot H^{8}(BU_n)\cong\Lambda_n^4\otimes\Zz_{(2)},\\
	{^UE}^{9,2}_2={^UE}^{9,2}_3\cong x_1^3\cdot H^{2}(BU_n)\cong\Lambda_n^1\otimes\Zz/2.
\end{gather*}
We know that the differential $d_3$ can be identified with $\nabla(-)\cdot x_1$,  so there is a commutative diagram of exact sequences
\begin{equation}
	\begin{gathered}
\label{eq:diagram p=2}\xymatrix{
	{^UE}_3^{0,*}\ar[r]^{d_3^{0,*}}\ar^{\cong}[d]&	{^UE}_3^{3,*}\ar[r]^{d_3^{3,*}}\ar^{\cong}[d]&{^UE}_3^{6,*}\ar[r]^{d_3^{6,*}}\ar[d]^{\cong}&{^UE}_3^{9,*}\ar[d]^{\cong}\\
	\Lambda_n\otimes\Zz_{(2)}\ar[r]^{\nabla_{(2)}}&	\Lambda_n\otimes\Zz_{(2)}\ar[r]^{\nabla_{(2)/2}}&\Lambda_n\otimes\Zz/2\ar[r]^{\nabla_{\Zz/2}}&\Lambda_n\otimes\Zz/2}
	\end{gathered}\end{equation}
Here the middle map $\nabla_{(2)/2}$ at the buttom line is the composition of $\nabla_{(2)}$ with the reduction $\Lambda_n\otimes\Zz_{(2)}\to\Lambda_n\otimes\Zz/2$. From the above diagram we immediately get ${^UE}^{9,2}_4=0$ since ${^UE}^{9,2}_3\cong \Zz/2\{\sigma_1\}$ and $\nabla_{\Zz/2}(\sigma_2)=\sigma_1$. 
Therefore, we have a isomorphism of groups:
\[_2H^{11}(BPU_n)\cong {^U}E_\infty^{3,8}.\]

Now we compute ${^U}E_\infty^{3,8}$. Since ${^UE}_4^{3,8}=\ker d_3^{3,8}/\im d_3^{0,10}$, the diagram \eqref{eq:diagram p=2}  gives an exact sequence
\begin{equation}\label{eq:exact p=2}
0\to{^UE}_4^{3,8}\to\coker\nabla_{(2)}^5\to\Lambda_n^3\otimes\Zz/2\to\coker\nabla_{(2)/2}^4\to0.
\end{equation}
\begin{lem}\label{lem:E^{3,8}}
$|{^UE}_4^{3,8}|=\begin{cases}
	8,&\text{if }n\equiv 4\text{ mod } 8,\\
	4,&\text{otherwise}.
\end{cases}$
\end{lem}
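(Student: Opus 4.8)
The plan is to extract $|{^UE}_4^{3,8}|$ from the four-term exact sequence \eqref{eq:exact p=2}, which rearranges into
\[
|{^UE}_4^{3,8}|=\frac{|\coker\nabla_{(2)}^5|\cdot|\coker\nabla_{(2)/2}^4|}{|\Lambda_n^3\otimes\Zz/2|}=\frac{|\coker\nabla_{(2)}^5|\cdot|\coker\nabla_{(2)/2}^4|}{2^{\PP(3)}}=\frac{|\coker\nabla_{(2)}^5|\cdot|\coker\nabla_{(2)/2}^4|}{8}.
\]
So the task reduces to computing the two numbers $|\coker\nabla_{(2)}^5|$ and $|\coker\nabla_{(2)/2}^4|$ on $\Lambda_n$ for $n=2k$, $k\geq2$. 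The first is already given by Lemma \ref{lem:cokernel p=2}: it is $|\Zz/8\oplus(\Zz/2)^2|=32$ if $k\equiv2\bmod 4$ (equivalently $n\equiv4\bmod 8$), and $|\Zz/4\oplus(\Zz/2)^2|=16$ otherwise. It therefore remains to identify $|\coker\nabla_{(2)/2}^4|$, the cokernel of the composite $\Lambda_n^4\otimes\Zz_{(2)}\xrightarrow{\nabla_{(2)}}\Lambda_n^3\otimes\Zz_{(2)}\to\Lambda_n^3\otimes\Zz/2$.

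First I would observe that $\coker\nabla_{(2)/2}^4$ is a $\Zz/2$-vector space, and that its dimension depends only on the rank of the reduction mod $2$ of the matrix of $\nabla_{(2)}^4$. Concretely, $\coker\nabla_{(2)/2}^4\cong\Lambda_n^3\otimes\Zz/2\,/\,\mathrm{image}$, and this image is the same as the image of $\nabla_{\Zz/2}^4$ together with the classes mod $2$ of the columns of $\nabla_{(2)}^4$ coming from basis vectors not divisible by $2$ — but in fact since we reduce the whole source mod $2$ after applying the integral $\nabla$, we get exactly $\coker\nabla_{(2)/2}^4=\coker(\text{matrix of }\nabla^4 \text{ reduced mod }2)=\coker\nabla_{\Zz/2}^4$. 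Thus, crucially, $|\coker\nabla_{(2)/2}^4|=|\coker\nabla_{\Zz/2}^4|=|\Zz/2|=2$ by Lemma \ref{lem:cokernel p=2}. Plugging in: $|{^UE}_4^{3,8}|=|\coker\nabla_{(2)}^5|\cdot 2/8=|\coker\nabla_{(2)}^5|/4$, which gives $32/4=8$ when $n\equiv4\bmod 8$ and $16/4=4$ otherwise — exactly the claimed formula.

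The main obstacle I anticipate is the identification $\coker\nabla_{(2)/2}^4=\coker\nabla_{\Zz/2}^4$; this needs a careful check that reducing the codomain mod $2$ after applying integral $\nabla$ produces the same cokernel as reducing both domain and codomain mod $2$ first, i.e. that $\nabla_{(2)}(\Lambda_n^4\otimes\Zz_{(2)})$ and $\nabla_{\Zz/2}(\Lambda_n^4\otimes\Zz/2)$ have the same image in $\Lambda_n^3\otimes\Zz/2$ — this holds because $\Lambda_n^4\otimes\Zz_{(2)}\to\Lambda_n^4\otimes\Zz/2$ is surjective and $\nabla$ commutes with reduction, so it is in fact immediate, but I would state it explicitly to avoid confusion between $\coker\nabla_{(2)/2}$ (cokernel of a map \emph{out of} a $\Zz_{(2)}$-module) and $\coker\nabla_{(2)}^4$ (which is not $2$-torsion in general). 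A secondary point is that \eqref{eq:exact p=2} is exact at all four spots, which follows directly from the long exact sequence / snake-type argument applied to the diagram \eqref{eq:diagram p=2}, using that $d_3^{0,10}$ followed by $d_3^{3,8}$ is zero and that the rows are complexes with the stated (co)homology; I would cite the construction of \eqref{eq:exact p=2} already given in the text rather than redo it.
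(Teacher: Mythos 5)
Your proposal is correct and follows essentially the same route as the paper: both arguments feed $|\coker\nabla_{(2)}^5|$ from Lemma \ref{lem:cokernel p=2} and the identification $\coker\nabla_{(2)/2}^4=\coker\nabla_{\Zz/2}^4=\Zz/2$ into the four-term exact sequence \eqref{eq:exact p=2}; the paper merely phrases the order count as a short exact sequence $0\to{^UE}_4^{3,8}\to\coker\nabla_{(2)}^5\to(\Zz/2)^2\to0$ rather than as an alternating product of orders. Your explicit justification of the surjectivity/commutation point behind $\coker\nabla_{(2)/2}^4=\coker\nabla_{\Zz/2}^4$ is a welcome clarification of a step the paper states without comment.
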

\begin{proof}
Since $\Lambda_n^3\otimes\Zz/2\cong(\Zz/2)^3$, and $\coker\nabla_{(2)/2}^4=\coker\nabla_{\Zz/2}^4=\Zz/2$ by Lemma \ref{lem:cokernel p=2}, the exactness of \eqref{eq:exact p=2} gives a short exact sequence
\[0\to{^UE}_4^{3,8}\to\coker\nabla_{(2)}^5\to(\Zz/2)^2\to0.\]
Hence the lemma follows by Lemma \ref{lem:cokernel p=2}.
\end{proof}

If the coefficient ring $\Zz_{(2)}$ is replaced by $\Zz/2$, then we will still have ${^UE}^{9,2}_4=0$, and the $\Zz/2$ analog of \eqref{eq:exact p=2} will give ${^UE}_4^{3,8}=\Zz/2$ by Lemma \ref{lem:cokernel p=2}. This implies that $_2H^{11}(BPU_n)$ is a cyclic group since \[H^{11}(BPU_n)\otimes\Zz/2\subset H^{11}(BPU_n;\Zz/2)\subset{^UE}_4^{3,8}=\Zz/2.\]

\begin{lem}\label{lem:d_7}
	In the spectral sequence $^UE$, we have	
	
	(a) ${^U}E_\infty^{3,8}={^U}E_8^{3,8}\subset{^U}E^{3,8}_7=\cdots= {^U}E_4^{3,8}$.
	
	(b) ${^U}E^{10,2}_3=\cdots={^U}E^{10,2}_{7}=\Zz/2\{y_{2,1}c_1\}$.
	
	(c) ${^U}d_7(x_1c_2^2)=y_{2,1}c_1$ if $n\equiv 2$ mod $4$, and ${^U}d_7(2x_1c_4)=y_{2,1}c_1$ if $4\mid n$.
\end{lem}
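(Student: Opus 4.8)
The plan is to treat parts (a) and (b) as bookkeeping inside the spectral sequence $^UE$ and to concentrate the effort on (c), which is a differential computation carried out by comparison with $^TE$ and $^KE$ through diagram~\eqref{diag:SS}.

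For (a) and (b): $^UE$ is concentrated in even rows, so all differentials of even rank vanish, and by Proposition~\ref{prop:K_3 below 14} the columns ${}^UE^{s,*}$ are $2$-locally trivial for $s=5,7,8$. Hence the only differentials meeting ${}^UE^{3,8}$ are ${}^Ud_3$ (incoming from ${}^UE^{0,10}$, outgoing to ${}^UE^{6,6}$), ${}^Ud_7\colon{}^UE^{3,8}\to{}^UE^{10,2}$ and ${}^Ud_9\colon{}^UE^{3,8}\to{}^UE^{12,0}$, all higher outgoing differentials vanishing because their target rows are negative. This gives ${}^UE_4^{3,8}=\cdots={}^UE_7^{3,8}$ and ${}^UE_8^{3,8}=\ker d_7^{3,8}$, so (a) is reduced to ${}^Ud_9^{3,8}=0$. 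Likewise ${}^UE_2^{10,2}\cong\Zz/2\{y_{2,1}c_1\}$, its incoming differentials on pages $\le7$ come from the $2$-locally trivial columns $s=5,7$, and by \eqref{eq:nabla} the only outgoing candidate before page $8$ is ${}^Ud_3(y_{2,1}c_1)=nx_1y_{2,1}=0$, since $n$ is even and $x_1y_{2,1}$ is $2$-torsion; this proves (b). To finish (a), note ${}^UE_9^{12,0}={}^UE_2^{12,0}=\Zz/2\{x_1^4\}$, because nothing reaches this entry on earlier pages (again ${}^Ud_3(x_1^3c_1)=nx_1^4=0$); hence ${}^Ud_9^{3,8}=0$ once $x_1^4$ survives to $E_\infty$, i.e.\ $\chi^*(x_1^4)\ne0$ in $H^{12}(BPU_n)$. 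Restricting along $\bar\Delta\colon BPU_2\to BPU_n$ from diagram~\eqref{diag:diagonal} and using $PU_2=SO_3$, the class $\chi^*(x_1)$ maps to the integral Stiefel--Whitney class $W_3\in H^3(BSO_3)$, whose mod $2$ reduction is $w_3$, so $\chi^*(x_1^4)$ restricts to a class reducing to $w_3^4\ne0$ in $H^*(BSO_3;\Zz/2)=\Zz/2[w_2,w_3]$. Thus $\chi^*(x_1^4)\ne0$ and ${}^Ud_9^{3,8}=0$.

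For (c) I would argue by naturality of $\Psi^*\colon{}^UE\to{}^TE$. The generator $y_{2,1}c_1$ maps to $y_{2,1}\sigma_1(v_1,\dots,v_n)$, the ``all ones'' vector in ${}^TE_2^{10,2}\cong\Zz/2\otimes H^2(BT^n)$; since $n$ is even this lies in $\ker{}^Td_3^{10,2}={}^TE_7^{10,2}$ and is nonzero there, so $\Psi^*$ is injective on ${}^UE_7^{10,2}$. It therefore suffices to show ${}^Td_7(x_1\sigma_2^2)=y_{2,1}\sigma_1$ when $n\equiv2\bmod4$ and ${}^Td_7(2x_1\sigma_4)=y_{2,1}\sigma_1$ when $4\mid n$, where $\sigma_k=\sigma_k(v_1,\dots,v_n)$. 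Formula~\eqref{eq:differential} reduces these to $^KE$: the coefficient of $v_n^i$ in the $v_n$-adic expansion of $\sigma_2^2$ (resp.\ $\sigma_4$), expressed in the $v_j'$, is $\w\Gamma_i$, which vanishes for $i>4$; and among $0\le i\le4$ only $i=3$ contributes, since ${}^KE_7^{3,2i}$ is $2$-locally trivial for $i\ne3$ whereas ${}^Kd_7(2x_1v_n^3)=y_{2,1}$ by Proposition~\ref{prop:differentials K}. From the identity $\sigma_k(v_1+y,\dots,v_n+y)=\sum_j\binom{n-j}{k-j}\sigma_j(v)\,y^{k-j}$ one computes $\w\Gamma_3(\sigma_2^2)=n(n-1)^2\,\sigma_1(v_1,\dots,v_{n-1})$ and $\w\Gamma_3(\sigma_4)=\binom{n-1}{3}\,\sigma_1(v_1,\dots,v_{n-1})$, whence
\[
{}^Td_7(x_1\sigma_2^2)=\tfrac{n(n-1)^2}{2}\,y_{2,1}\sum_{j<n}v_j',\qquad
{}^Td_7(2x_1\sigma_4)=\binom{n-1}{3}\,y_{2,1}\sum_{j<n}v_j'.
\]
Because $n$ is even, $\sum_{j<n}v_j'\equiv\sigma_1\pmod2$; moreover $\tfrac{n(n-1)^2}{2}$ is odd precisely when $n\equiv2\bmod4$, and a Lucas' theorem check shows $\binom{n-1}{3}$ is odd precisely when $4\mid n$. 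In each case the right-hand side equals $y_{2,1}\sigma_1\ne0$, so injectivity of $\Psi^*$ forces ${}^Ud_7(x_1c_2^2)=y_{2,1}c_1$ for $n\equiv2\bmod4$ and ${}^Ud_7(2x_1c_4)=y_{2,1}c_1$ for $4\mid n$.

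The main obstacle is the computation behind (c): carrying out the $v_n$-adic expansion of the relevant symmetric polynomials correctly, determining exactly which terms ${}^Kd_7(\,\cdot\,x_1v_n^i)$ can be nonzero so that Proposition~\ref{prop:differentials K} is used only for $i=3$, and tracking the integer coefficients modulo $2$ (the parity of $\tfrac12\w\Gamma_3$ is what selects the two cases). A secondary point is the injectivity of $\Psi^*$ on ${}^UE_7^{10,2}$, which needs the action of ${}^Td_3$ on the $y_{2,1}$-column of $^TE$; this follows from Proposition~\ref{prop:T}.
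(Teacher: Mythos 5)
Your proof is correct and follows essentially the same route as the paper: parts (a) and (b) by inspecting the $2$-locally nontrivial columns via Proposition \ref{prop:K_3 below 14}, and part (c) by pushing into $^TE$ (using injectivity of $\Psi^*$ on ${}^UE_7^{10,2}$), expanding $\sigma_2^2$ and $\sigma_4$ in powers of $v_n$, and isolating the $v_n^3$-coefficient, to which ${}^Kd_7(2x_1v^3)=y_{2,1}$ applies — your parity bookkeeping ($\tfrac{n(n-1)^2}{2}$ odd iff $n\equiv 2$ mod $4$, $\binom{n-1}{3}$ odd iff $4\mid n$) matches the paper's. In fact, for part (a) you are more careful than the paper: the target ${}^UE_9^{12,0}\cong\Zz/2\{x_1^4\}$ of $d_9^{3,8}$ is \emph{not} $2$-locally trivial, and your observation that $x_1^4$ restricts to $W_3^4\neq 0$ in $H^*(BSO_3)$ (hence survives to $E_\infty$, forcing $d_9^{3,8}=0$) supplies a step that the paper's ``easy check of the nontrivial columns'' glosses over.
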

\begin{proof}
	Part (a) follows from the fact that $d_r^{3,8}=0$ for $4\leq r\leq 6$ and $r\geq 8$, which comes from an easy check of the nontrivial columns of ${^U}E^{*,12-*}_3$ by use of Proposition \ref{prop:K_3 below 14}.  
	
	For part (b), we have ${^U}E^{10,2}_3\cong y_{2,1}\cdot H^2(BU_n)\cong \Zz/2\{y_{2,1}c_1\}$, and $d_3(y_{2,1}c_1)=ny_{2,1}=0$, it follows that ${^U}E^{10,2}_3={^U}E^{10,2}_4$. Furthermore, inspection of degrees gives ${^U}E^{10,2}_4=\cdots={^U}E^{10,2}_7$.
	
	For part (c), we first note that $\nabla_{(2)/2}(\sigma^2_2)=\nabla_{(2)/2}(2\sigma_4)=0$, so $x_1c_2^2$, $2x_1c_4\in {^U}E^{3,8}_4$ by diagram \eqref{eq:diagram p=2}.
	Then we consider the image of ${^U}d_7(x_1c_2^2)$ or ${^U}d_7(2x_1c_4)$, depending on $n\equiv 2$ or $0$ mod 4, in $^TE$ via the spectral sequence map $\Psi^*:{^U}E\to{^T}E$.
	To do this, let $v_i'=v_i-v_n\in H^*(BT^n)\cong\Zz[v_1,\dots,v_n]$ for $1\leq i\leq n$. By Proposition \ref{prop:T}, they are permanent cocycles in $^TE$.
	By definition, in $\Zz[v_1,\dots,v_n]$  we have
	\begin{equation}
		\begin{split}
\sigma_j(v_1,\dots,v_n)&=\sum_{1\leq i_1<\cdots< i_j\leq n}(v_{i_1}'+v_n)(v_{i_2}'+v_n)\cdots(v_{i_j}'+v_n)\\
&=\sum_{1\leq i_1<\cdots< i_j\leq n}\,\sum_{k=0}^j\sigma_k(v_{i_1}',\dots,v_{i_j}')v_n^{j-k}\\
&=\sum_{k=0}^j\binom{n-k}{j-k}\sigma_k(v_1',\dots,v_n')v_n^{j-k}.
		\end{split}	
\end{equation}
Hence $\Psi^*(x_1c_2^2)=x_1\sigma_2^2=x_1[\binom{n}{2}v_n^2+(n-1)\sigma_1'v_n+\sigma_2']^2$, where for simplicity we write $\sigma_j$ and $\sigma_j'$ instead of $\sigma_j(v_1,\dots,v_n)$ and $\sigma_k(v_1',\dots,v_n')$ respectively.
Similarly, \[\Psi^*(2x_1c_4)=2x_1\big[\binom{n}{4}v_n^4+\binom{n-1}{3}\sigma_1'v_n^3+\binom{n-2}{2}\sigma_2'v_n^2+(n-3)\sigma_3'v_n+\sigma_4'\big].\]
Since $x_1v_n^{k}={^T}d_3(\frac{1}{k+1}v_n^{k+1})$ for $k$ even  and $2x_1v_n={^T}d_3(v_n^2)$, $\Psi^*(x_1c_2^2)$ in $^TE_4$ is homologous to $2\binom{n}{2}(n-1)x_1\sigma_1'v_n^3$. For the same reason $\Psi^*(2x_1c_4)$ in $^TE_4$ is homologous to $2\binom{n-1}{3}x_1\sigma_1'v_n^3$.

If $n\equiv 2$ mod $4$, then $2\nmid\binom{n}{2},\,(n-1)$. Hence by Proposition \ref{prop:differentials K} and \ref{prop:T} we have
\[\Psi^*\circ {^U}d_7(x_1c_2^2)={^T}d_7\circ\Psi^*(x_1c_2^2)=y_{2,1}\sigma_1'=y_{2,1}\sigma_1\in {^T}E_7^{10,2}\subset{^T}E_3^{10,2}.\]
The equality $y_{2,1}\sigma_1'=y_{2,1}\sigma_1$ comes from the fact that $y_{2,1}$ is $2$-torsion and $\sigma_1=\sigma_1'+nx_n$, and the inclusion ${^T}E_7^{10,2}\subset{^T}E_3^{10,2}$ follows by the same argument as in the proof of part (b). This implies that ${^U}d_7(x_1c_2^2)=y_{2,1}c_1$ and (c) holds in this case. On the other hand, if $4\mid n$, then  $2\nmid\binom{n-1}{3}$. Hence the same reasoning shows that $\Psi^*\circ {^U}d_7(2x_1c_4)=y_{2,1}c_1$, completing the proof of (c).
\end{proof}

The desired group structure of $_2H^{11}(BPU_n)\cong {^UE}_\infty^{3,8}$ follows from Lemma \ref{lem:E^{3,8}} and Lemma \ref{lem:d_7}, since we already know that $_2H^{11}(BPU_n)$ is a cyclic group. 

Now we consider the ring structure of $H^{\leq 11}(BPU_n)$. First, we give a lemma for general $n\geq 4$, which is essentially the same as \cite[Lemma 4.3]{ZhaZ24}.
\begin{lem}\label{lem:kernel p=2}
	Let $n\geq 4$ be an integer. Then for the action of $\nabla$ on $\Lambda_n$, we have $\ker\nabla^4\cong \Zz^2$ with two generators $\alpha_2^2$ and $\alpha_4$, where 
	\begin{gather*}
		\alpha_2=\frac{1}{\epsilon_2(n-1)}\big[2n\sigma_2-(n-1)\sigma^2_1\big],\\
		\begin{split}
			\alpha_4=&\frac{1}{\epsilon_3(n)}\big[n\sigma_4-(n-3)\sigma_3\sigma_1-\frac{1}{2}(n^2+n+1)(n-2)(n-3)\sigma_2^2\\
			&+\frac{1}{2}n^2(n-2)(n-3)\sigma_2\sigma_1^2-\frac{1}{8}n(n-1)(n-2)(n-3)\sigma_1^4\big].
		\end{split}
	\end{gather*}
\end{lem}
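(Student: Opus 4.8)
The plan is to exploit that $\ker\nabla^4$ is automatically a saturated (hence free) $\Zz$-submodule of $\Lambda_n^4$, pin down its rank, and then promote the two evident elements $\alpha_2^2,\alpha_4$ to a $\Zz$-basis by a coefficient and content argument.

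\emph{Step 1: the rank.} Recall $\Lambda_n=\Zz[\sigma_1,\dots,\sigma_n]$, so for $n\geq 4$ the module $\Lambda_n^4$ is free with $\Zz$-basis the five monomials $\sigma_4,\ \sigma_1\sigma_3,\ \sigma_2^2,\ \sigma_1^2\sigma_2,\ \sigma_1^4$, while $\Lambda_n^3$ is free of rank $3$ on $\sigma_3,\ \sigma_1\sigma_2,\ \sigma_1^3$. Since $\Lambda_n^3$ is torsion-free, $\ker\nabla^4$ is a saturated submodule of the free module $\Lambda_n^4$, hence free, of rank $5-\operatorname{rank}_{\Qq}\nabla^4$. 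Using $\nabla(\sigma_k)=(n-k+1)\sigma_{k-1}$ (a special case of Theorem \ref{thm:nabla}) and the Leibniz rule one gets $\nabla(\sigma_4)=(n-3)\sigma_3$, $\nabla(\sigma_2^2)=2(n-1)\sigma_1\sigma_2$, $\nabla(\sigma_1^4)=4n\sigma_1^3$; for $n\geq4$ these are nonzero multiples of the three basis vectors of $\Lambda_n^3$, so $\nabla^4\otimes\Qq$ is onto and $\ker\nabla^4\cong\Zz^2$.

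\emph{Step 2: membership.} I would first check $\alpha_2,\alpha_4\in\Lambda_n$: the denominators $2$ and $8$ clear because products of consecutive integers are divisible by $2$ resp.\ $8$, and when $3\mid n$ every term of the bracket defining $\alpha_4$ is divisible by $3$ since $3\mid n$ and $3\mid n-3$. Next, $\nabla\alpha_2=0$ is immediate from $\nabla(2n\sigma_2-(n-1)\sigma_1^2)=2n(n-1)\sigma_1-(n-1)\cdot 2n\sigma_1=0$, and since $\nabla$ is a derivation, $\nabla(\alpha_2^2)=2\alpha_2\nabla\alpha_2=0$, so $\alpha_2^2\in\ker\nabla^4$. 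The remaining relation $\nabla\alpha_4=0$ is the one genuinely computational point; it follows by a direct expansion via $\nabla\sigma_k=(n-k+1)\sigma_{k-1}$ and Leibniz. Equivalently (and more constructively) one may instead solve the linear system cutting out $\ker(\nabla^4\otimes\Qq)$ in the $\sigma$-monomial basis and recognize a convenient integral generator of it, complementary to $\alpha_2^2$, as exactly $\alpha_4$; this reproves $\nabla\alpha_4=0$ simultaneously. Since the statement is essentially \cite[Lemma 4.3]{ZhaZ24}, one may also simply quote that computation here.

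\emph{Step 3: they form a $\Zz$-basis, and the obstacle.} From the explicit formulas, $\alpha_2^2$ involves only $\sigma_2^2,\sigma_1^2\sigma_2,\sigma_1^4$, whereas $\alpha_4$ has $\sigma_4$-coefficient $n/\epsilon_3(n)\neq0$ and $\sigma_1\sigma_3$-coefficient $-(n-3)/\epsilon_3(n)$; in particular $\alpha_2^2,\alpha_4$ are $\Qq$-independent, hence a $\Qq$-basis of $\ker\nabla^4\otimes\Qq$. Given $f\in\ker\nabla^4$, write $f=a\alpha_2^2+b\alpha_4$ with $a,b\in\Qq$; comparing the (integral) $\sigma_4$- and $\sigma_1\sigma_3$-coefficients of $f$ gives $b\cdot\frac{n}{\epsilon_3(n)}\in\Zz$ and $b\cdot\frac{n-3}{\epsilon_3(n)}\in\Zz$, and since $\gcd(n,n-3)=\gcd(n,3)=\epsilon_3(n)$ the two multipliers are coprime, forcing $b\in\Zz$. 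Then $a\alpha_2^2=f-b\alpha_4\in\Lambda_n^4$ has all coefficients integral, and $\alpha_2^2$ is primitive — $\alpha_2$ is primitive in $\Lambda_n$ (its monomial-symmetric coefficients are $-(n-1)/\epsilon_2(n-1)$ and $2/\epsilon_2(n-1)$, which are coprime), hence so is its square by multiplicativity of content, which is unchanged under a $\mathrm{GL}(\Zz)$ change of basis — so $a\in\Zz$. Thus $\ker\nabla^4=\Zz\alpha_2^2\oplus\Zz\alpha_4$. The only place where real work is hidden is the identity $\nabla\alpha_4=0$ in Step 2; everything else is linear algebra over $\Zz$ and elementary gcd bookkeeping.
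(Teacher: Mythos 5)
Your proposal is correct and follows essentially the same route as the paper: verify integrality and $\nabla\alpha_2=\nabla\alpha_4=0$ by direct computation, observe that $\ker\nabla^4$ is a saturated rank-$2$ sublattice, and then show the two elements generate it using the coprimality of the $\sigma_4$- and $\sigma_3\sigma_1$-coefficients of $\alpha_4$ together with the primitivity of $\alpha_2^2$. The only (cosmetic) difference is that the paper phrases the last step as linear independence of $\rho(\alpha_2^2),\rho(\alpha_4)$ modulo every prime $p$, whereas you argue directly that the rational coordinates $a,b$ of any kernel element must be integers; these are equivalent ways of showing the sublattice has index $1$.
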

\begin{proof}
It is easily verified that the coefficients of the monomial terms in the expression of $\alpha_4$ are all integers. Since $\nabla(\sigma_i)=(n-i+1)\sigma_{i-1}$, $\alpha_2,\alpha_4\in\ker\nabla$ follows by straightforward computation. The group isomorphism $\ker\nabla^4\cong \Zz^2$ is obvious, so to prove the lemma, it suffices to show that for any prime $p$,  $\rho(\alpha_2^2)$ and $\rho(\alpha_4)$ are linearly independent over $\Zz/p$, where $\rho:\Lambda_n\to\Lambda_n\otimes\Zz/p$ is the mod $p$ reduction.
This can be seen from the fact that the coefficients of $\sigma_2^2$ and $\sigma_1^4$ (resp. $\sigma_4$ and $\sigma_3\sigma_1$) in the expression of $\rho(\alpha_2^2)$  (resp. $\rho(\alpha_4)$) can not be both zero for all prime $p$. 
\end{proof}

\begin{prop}\label{prop:p=2 11}Let $\alpha_2$, $\alpha_4\in H^*(BU_n)\cong \Lambda_n$ be the elements defined in Lemma \ref{lem:kernel p=2}. Then
\[{^UE}_\infty^{3,8}\cong\begin{cases}
		\Zz/2\{x_1\alpha_2^2\}, &\text{if }n\equiv 2\text{ mod }4,\\
		\Zz/4\{x_1\alpha_4\}, &\text{if }n\equiv 4\text{ mod }8,\\
		\Zz/2\{x_1\alpha_4\}, &\text{if }n\equiv 0\text{ mod }8.
	\end{cases}\]

\end{prop}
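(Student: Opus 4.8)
The orders of ${^UE}_\infty^{3,8}$ are already determined by the discussion preceding the proposition — it is $\Zz/4$ when $n\equiv 4\bmod 8$ and $\Zz/2$ otherwise, and it is canonically identified with $_2H^{11}(BPU_n)$ — so the only content is to pin down the generator in each case. The plan has two steps: (i) show that $x_1\alpha_2^2$ and $x_1\alpha_4$ are permanent cocycles, hence define classes in ${^UE}_\infty^{3,8}$; (ii) compute the order of the relevant one of these classes for each congruence class of $n$.

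For step (i), Lemma \ref{lem:kernel p=2} gives $\alpha_2^2,\alpha_4\in\ker\nabla^4$, so by the identification of $d_3^{3,8}$ with $\nabla_{(2)/2}$ in \eqref{eq:diagram p=2} both $x_1\alpha_2^2$ and $x_1\alpha_4$ are $d_3$-cocycles; they are $d_r$-cocycles for $4\le r\le 6$ and $r\ge 8$ by Lemma \ref{lem:d_7}(a), so only $d_7$ is at issue. To handle $d_7$ I would push forward to ${^TE}$ along $\Psi^*$: since $B\psi^*(\sigma_i)=\sigma_i(v_1,\dots,v_n)$ and $\alpha_2^2,\alpha_4\in\ker\nabla$, the classes $\Psi^*(\alpha_2^2),\Psi^*(\alpha_4)$ are invariant under the shift $v_i\mapsto v_i+t$, hence polynomials in $v_1',\dots,v_{n-1}'$ alone; Propositions \ref{prop:T} and \ref{prop:differentials K} (together with ${^K}d_r(x_1)=0$) then give ${^T}d_7\bigl(x_1\Psi^*(\alpha_2^2)\bigr)={^T}d_7\bigl(x_1\Psi^*(\alpha_4)\bigr)=0$. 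Since $\Psi^*\colon{^UE}_7^{10,2}\to{^TE}_7^{10,2}$ is injective (as in the proof of Lemma \ref{lem:d_7}(c): its generator $y_{2,1}c_1$ maps to $y_{2,1}\sigma_1\ne 0$), this forces $d_7(x_1\alpha_2^2)=d_7(x_1\alpha_4)=0$, so both classes survive to ${^UE}_\infty^{3,8}$.

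For step (ii), note that ${^UE}_\infty^{3,8}=\ker d_7^{3,8}$ sits inside ${^UE}_4^{3,8}$, which embeds into $\coker\nabla^5_{(2)}$ by \eqref{eq:exact p=2}; hence the order of $x_1\alpha_2^2$ (resp.\ $x_1\alpha_4$) in ${^UE}_\infty^{3,8}$ equals the order of the class of $\alpha_2^2$ (resp.\ $\alpha_4$) in $\coker\nabla^5_{(2)}$. So it suffices to show $\alpha_2^2\notin\im\nabla^5_{(2)}$ when $n\equiv 2\bmod 4$, $\alpha_4\notin\im\nabla^5_{(2)}$ when $n\equiv 0\bmod 8$, and $2\alpha_4\notin\im\nabla^5_{(2)}$ when $n\equiv 4\bmod 8$. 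The case $n\equiv 0\bmod 8$ is done by reduction mod $2$: expanding $\alpha_4$ (Lemma \ref{lem:kernel p=2}) in the Schur basis $\{s_{(4)},s_{(3,1)},s_{(2,2)},s_{(2,1,1)},s_{(1,1,1,1)}\}$ of $\Lambda_n^4$ and comparing $2$-adic valuations, the $s_{(2,2)}$-coefficient of $\epsilon_3(n)\alpha_4$ is odd, whereas $\nabla(s_\mu)$ has even $s_{(2,2)}$-coefficient for every $\mu\vdash 5$; hence $\alpha_4\notin\im\nabla^5_{(2)}$ and $x_1\alpha_4$ generates ${^UE}_\infty^{3,8}\cong\Zz/2$. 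The case $n\equiv 2\bmod 4$ is cleanest via the diagonal map $\bar\Delta\colon BPU_2\to BPU_n$: by \eqref{eq:chern class}, $\bar\Delta^*(\alpha_2)$ equals $\tfrac{n^2}{4}$ times a generator of $H^4(BPU_2)_{(2)}\cong\Zz_{(2)}$, an odd multiple of $p_1$ since $n/2$ is odd, while $\bar\Delta^*(x_1)$ is the generator $W_3$ of $H^3(BPU_2)_{(2)}=\Zz/2$; as $H^{11}(BPU_2)_{(2)}=\Zz/2\{W_3p_1^2\}$ (since $H^*(BSO_3;\Zz)=\Zz[p_1,W_3]/(2W_3)$), we obtain $\bar\Delta^*(x_1\alpha_2^2)=W_3p_1^2\ne 0$, so $x_1\alpha_2^2$ generates ${^UE}_\infty^{3,8}\cong\Zz/2$.

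The remaining and hardest case is $n\equiv 4\bmod 8$, where $x_1\alpha_4$ must be shown to have \emph{full} order $4$, not merely to be nonzero — and this cannot be seen modulo $2$. Here I would use $\bar\Delta\colon BPU_4\to BPU_n$ and the known ring $H^*(BPU_4;\Zz)$ of \cite{Fan24}, for which, at the prime $2$, $H^8(BPU_4)$ is free of rank $2$ on $\alpha_2^2,\alpha_4$ and $H^{11}(BPU_4)\cong\Zz/4\{x_1\alpha_4\}$ with $x_1\alpha_2^2=0$. Since $\Delta^*$ commutes with $\nabla$, $\bar\Delta^*(\alpha_4^{(n)})$ lies in $\ker\nabla^4\subset\Lambda_4$, and comparing, via \eqref{eq:chern class} (the diagonal here being into $n/4$ blocks of size $4$), the coefficient of the top Chern class $c_4'$ on both sides gives $\bar\Delta^*(\alpha_4^{(n)})=\tfrac{n^2}{16\,\epsilon_3(n)}\,\alpha_4^{(4)}+c\,\alpha_2^{(4)2}$ for some integer $c$; as $n/4$ is odd, $\tfrac{n^2}{16\,\epsilon_3(n)}$ is odd, so $\bar\Delta^*(x_1\alpha_4^{(n)})=\tfrac{n^2}{16\,\epsilon_3(n)}\,x_1\alpha_4^{(4)}$ is an odd multiple of the generator of $\Zz/4$, forcing $x_1\alpha_4^{(n)}$ to have order $4$ in ${^UE}_\infty^{3,8}$. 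The main obstacle in step (ii) is exactly this $n\equiv 4\bmod 8$ case (invisible mod $2$, hence needing the $BPU_4$ comparison), together with the careful $2$-adic valuation bookkeeping in the $n\equiv 0\bmod 8$ reduction: the whole $\Zz/4$-versus-$\Zz/2$ dichotomy turns precisely on the parity of $n/4$.
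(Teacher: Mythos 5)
Your proposal is correct and follows essentially the same route as the paper: permanence of $x_1\alpha_2^2$ and $x_1\alpha_4$, then the diagonal map to $BPU_2$ for $n\equiv 2\bmod 4$, a parity obstruction on a degree-$4$ coefficient (the paper uses the $\sigma_2^2$-coefficient in the monomial basis rather than the $s_{(2,2)}$-coefficient, a cosmetic difference) for $n\equiv 0\bmod 8$, and the comparison with $H^*(BPU_4)$ from \cite{Fan24} for $n\equiv 4\bmod 8$. The only real divergence is step (i), where the paper avoids your hand-check of $d_7$ entirely by noting that $\alpha_2,\alpha_4\in{^U}E_4^{0,*}={^U}E_\infty^{0,*}$ (Theorem \ref{thm:E_4}) and invoking the Leibniz rule.
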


\begin{proof}
Since $\alpha_2,\alpha_4\in{^U}E_4^{0,*}$, $\alpha_2$ and $\alpha_4$ are permanent cocycles by Theorem \ref{thm:E_4}, and so the elements $x_1\alpha_2^2$ and $x_1\alpha_4$ are also permanent cocycles in $^UE$ by Leibniz rule.  Hence it suffices to show that in ${^UE}_4^{3,8}$, $x_1\alpha_2^2\neq 0$ for $n\equiv 2$ mod $4$, that $2x_1\alpha_4\neq 0$ for $n\equiv 4$ mod $8$, and that $x_1\alpha_4\neq 0$ for $n\equiv 0$ mod $8$.

For the case $n\equiv 2$ mod $4$, let $\bar\Delta: BPU_2\to BPU_n$ be the diagonal map. Recall that $H^*(BPU_2)\cong H^*(BSO_3)\cong \Zz[p_1,W_3]/(2W_3)$, where $p_1$, $W_3$ are respectively the first Pontryagin class and the degree-$3$ integral Stiefel-Whitney class of $BSO_3$. By \cite[Lemma 7.2]{Gu21}  $\bar\Delta^*(\alpha_2)=(\frac{n}{2})^2p_1$ and $\bar\Delta^*(x_1)=W_3$, it follows that $\bar\Delta^*(x_1\alpha_2^2)=W_3p_1^2\neq 0$, and so $x_1\alpha_2^2\neq 0$.   

For the case $n\equiv 4$ mod $8$, let $\bar\Delta: BPU_4\to BPU_n$ be the diagonal map. Using \cite[Theorem 2.3]{Fan24} and making a diagonal argument as in the second paragraph of the proof of Lemma \ref{lem:p=3}, one can show that there exists an element $\alpha\in {^U}E_4^{0,8}$ such that $\bar\Delta^*(2x_1\alpha)\neq 0$, and then $2x_1\alpha\neq 0$. Since ${^U}E_4^{0,8}$ is generated by $\alpha_4$ and $\alpha_2^2$ by Lemma \ref{lem:kernel p=2} and $x_1\alpha_2=0$ by \cite[Theorem 1.1]{Gu21}, we obtain $2x_1\alpha_4\neq 0$.

For the case $n\equiv 0$ mod $8$, note that the coefficient of $\sigma_2^2$ in the expression of $\alpha_4$ is an odd number. However, if $k\sigma_2^2$ ($k\neq 0$) appears in the expression of $\nabla(\mm)$ for a monomial $\mm\in\Lambda_n=\Zz[\sigma_1,\dots,\sigma_n]$, then $\mm=\sigma_3\sigma_2$ or $\sigma_2^2\sigma_1$, and the coefficients of $\sigma_2^2$ in both $\nabla(\sigma_3\sigma_2)$ and $\nabla(\sigma_2^2\sigma_1)$ are  even numbers. This implies that $x_1\alpha_4\neq 0$ in $^UE_4^{3,8}$. The proof is completed.
\end{proof}

Recall that the ring structure of $H^{\leq 10}(BPU_n)$ is given in \cite{Gu21} where it is shown that $^UE_\infty^{0,\leq 10}$ is isomorphic to $\Zz[\alpha_2,\dots,\alpha_{j_n}]$, $j_n=\min\{5,n\}$, $\deg(\alpha_i)=2$. To determine the cup product structure in dimension $11$, we first choose elements $\w\alpha_i\in H^{2i}(BPU_n)$, $2\leq i\leq \min\{5,n\}$, such that their images in $^UE_\infty^{0,2i}$ are $\alpha_i$ and the product relations in \cite[Theorem 1.1]{Gu21} are satisfied. From Theorem \ref{thm:p-torsion} and Proposition \ref{prop:p=2 11} we know that 
\begin{equation}\label{eq:dim 11}
	H^{11}(BPU_n)\cong\begin{cases}
    \Zz/\epsilon_3(n)\{x_1y_{3,0}\}, &\text{if }n\not\equiv 0\text{ mod }2,\\
	\Zz/2\{x_1\w\alpha_2^2\}\oplus\Zz/\epsilon_3(n)\{x_1y_{3,0}\}, &\text{if }n\equiv 2\text{ mod }4,\\
	\Zz/4\{x_1\w\alpha_4\}\oplus\Zz/\epsilon_3(n)\{x_1y_{3,0}\}, &\text{if }n\equiv 4\text{ mod }8,\\
	\Zz/2\{x_1\w\alpha_4\}\oplus\Zz/\epsilon_3(n)\{x_1y_{3,0}\}, &\text{if }n\equiv 0\text{ mod }8.
\end{cases}
\end{equation}
Hence we only need to verify that $\mu(n)x_1\w\alpha_4=0$ in $H^*(BPU_n)$. From \eqref{eq:dim 11} we know that $\mu'(n)x_1\w\alpha_4\in \Zz/\epsilon_3(n)\{x_1y_{3,0}\}$, where $\mu'(n)=\mu(n)$ if $n\not\equiv 2$ mod $4$, and $\mu'(n)=2\mu(n)$ otherwise.
Thus we can replace $\w\alpha_4$ by $\w\alpha_4+ay_{3,0}$ for a suitable $a\in\Zz$ so that  $\mu'(n)x_1\w\alpha_4=0$. Finally, if $x_1\w\alpha_4\neq 0$ for some $n\equiv 2$ mod $4$, we can replace $\w\alpha_4$ by $\w\alpha_4+\w\alpha_2^2$ to get $x_1\w\alpha_4=0$. The proof of Theorem \ref{thm:below 12} is completed.

\bibliography{M-A}
\bibliographystyle{amsplain}
\end{document}